\setlist[enumerate]{font=\normalfont}
\theoremstyle{plain}
\newtheorem*{theorem*}{Theorem}
\newtheorem{theorem}{Theorem}[section]
\newtheorem{corollary}[theorem]{Corollary}
\newtheorem{lemma}[theorem]{Lemma}
\newtheorem{proposition}[theorem]{Proposition}
\numberwithin{equation}{section}
\theoremstyle{definition}
\newtheorem{definition}[theorem]{Definition}
\newtheorem{example}[theorem]{Example}
\newtheorem*{example*}{Example}
\theoremstyle{remark}
\newtheorem{remark}[theorem]{Remark}
\newtheorem*{remark*}{Remark}
\author{Juan Sebastian Numpaque-Roa}
\address{\flushleft Centro de Matemática da Universidade do Porto\\ Departamento de Matemática, Faculdade de Ciências da Universidade do Porto \\ Rua do Campo Alegre S/N, 4169-007 Porto, Portugal}
\email{\noindent js.numpaqueroa@gmail.com}
\begin{document}
	\title{Tensor Products of Quiver Bundles}
	\begin{abstract}
		In this work we introduce a notion of tensor product of (twisted) quiver representations with relations in the category of $\mathcal{O}_X$-modules. As a first application of our notion,  we see that tensor products of polystable quiver bundles are polystable and later we use this to both deduce a quiver version of the Segre embedding and to identify distinguished closed subschemes of $\text{GL}(n,\mathbb{C})$-character varieties of free abelian groups. 
	\end{abstract}
	\maketitle
	\tableofcontents
	\section{Introduction}
	Moduli spaces of quiver representations are ubiquitous objects in contemporary algebraic geometry. They provide geometric interpretations to several linear algebra and representation theoretic problems and are a natural playground to both test and infere results relevant for moduli theory (see, for instance, \cite{HoskinsSurvey}). A quiver is a finite directed graph and a quiver representation, in the classical sense, is obtained by a assigning to each vertex of the graph a finite-dimensional complex vector space and to each edge, a linear map between the previously assigned vector spaces. One can also consider representations of a quiver in other categories such as that of (quasi)-coherent sheaves over a complex projective variety or the more general category of $\mathcal{O}_X$-modules with $(X,\mathcal{O}_X)$ a ringed space. 
	\\
	\\ 
	A motivation for studying these more general quiver representations comes from the  study of Higgs bundles over a compact Riemann surface $X$. A Higgs bundle over $X$ is a pair $(\mathcal{E},\varphi:\mathcal{E}\to \mathcal{E}\otimes K_X)$ with $\mathcal{E}\to X$ a holomorphic vector bundle, $K_X$ the canonical bundle of $X$ and $\varphi$ a morphism of vector bundles. Moduli spaces for Higgs bundles of fixed topological types, which satisfy a certain stability condition, can be constructed  using both analytic and algebraic methods (see, for instance, \cite{Nitsure}). A successful strategy, as the works of Hitchin \cite{HitchinSeminal} and Gothen \cite{GothenBettiNumbers} show, to study the geometry of this moduli space has been to consider the natural $\mathbb{C}^*$-action on it given by 
	$
	t\cdot(\mathcal{E},\varphi)\mapsto (\mathcal{E},t\varphi).
	$ 
	It can be shown (see, for instance, \cite{GarciaRabosoRayan}) that the fixed point loci for this action correspond to moduli spaces of ``diagrams'', known, in this particular case, as holomorphic chains and, more generally, as quiver bundles, of the form:
	\begin{center}
		\begin{tikzcd}[scale cd=0.9]
			\mathcal{E}_m\arrow[r,"\varphi_{m}"]&\mathcal{E}_{m-1}\arrow[r,"\varphi_{m-1}"] &\cdots \arrow[r,"\varphi_{2}"]&\mathcal{E}_1.
		\end{tikzcd}
	\end{center}
	Here, each $\mathcal{E}_i\to X$ is a holomorphic vector bundle and $\varphi_i:\mathcal{E}_i\to \mathcal{E}_{i-1}$ is a morphism of holomorphic vector bundles. Many topological invariants, such as the cohomology, of the moduli space of Higgs bundles depend only on that of the fixed point loci hence the importance of the study of moduli spaces of quiver bundles. The careful reader may have already noticed that these are quiver representations in the category of holomorphic vector bundles over $X$.
	\\
	\\
	Moduli spaces for quiver bundles were constructed algebraically by Schmitt \cite{SchmittModuliQuiverBundles} and they depend on the choice of a stability parameter. Successful approaches to study the geometry of these moduli spaces include wall-crossing arguments involving the variation of the stability parameter (see, for instance, \cite{AlvarezConsulGarciaPradaSchmitt}).  On this regard, Gothen and Nozad \cite{GothenNozad} introduced a new quiver bundle whose stability simplifies and gives algebraic proofs to results which were previously only obtained by analytic methods. This is the starting point for this project: it turns out that Gothen and Nozad's quiver bundle is an instance of the notion of tensor product of quiver bundles that we introduce in this work (see Example \ref{extendedHomQuiverExample}). This notion generalizes, not only to quiver bundles but to the more general framework of quiver representations in the category of $\mathcal{O}_X$-modules,  that of Herschend \cite{HerschendTensorProducts} for classical quiver representations. It relies, basically, on the fact that quiver representations can be realized as modules over the path algebra of the quiver so one defines the tensor product of representations as that of the corresponding modules. Then one sees that the module obtained by tensorization corresponds to a representation with relations of a new quiver. 
	\\
	\\
	In Sections \ref{tensorProductOfModulesSection} and \ref{quiversWithRelationsSection}, after some preliminary definitions and results on quivers and their representations, we give the ingredients on which our notion is build. Theorem \ref{isomorphismTensorProductTwistedPathAlgebrasWithRelations} and Theorem \ref{equivalenceOfCatTwistedRepresentationsModulesWithRelations} are relevant, the former tells us that tensor products of quiver path algebras can be regarded as the path algebra of a new quiver modulo some ideal and the latter tells us how to go from modules over the path algebra to representations with relations of a given quiver. Then, in Section \ref{tensorProductsTwistedRepresentations} we put all together and discuss on our notion of tensor product of quiver representations. All the work until here is carried out for the more general twisted quiver representations on the category of $\mathcal{O}_X$-modules. In Section \ref{theUntwistedCase}, we discuss what happens when there is no twisting in our representations.  In Section \ref{surveySection} we survey both the algebraic and analytic constructions of the moduli space of classic quiver representations so as the gauge-theoretic quiver vortex equation on a compact Riemann surface. The main purpose of this is to lay the ground for the next section where applications will be presented. The first application concerns the polystability of tensor product of quiver bundles in Theorem \ref{polystabilityTensorProductQuiverBundles}, our approach will be analytic as we use the quiver vortex equations. As a corollary, we then recover a recent result from Das et.al. regarding the polystability of tensor products of classical quiver representations \cite{DasEtAlTensors}. After a brief detour, in which we discuss the effect that operations on quivers have on the stability of its representations (Section \ref{operationsOnQuiversAndStability}), we prove an analogue of the classic Segre embedding for classical quiver representations in Section \ref{segreEmbeddingForQuiverRepresentations}. In fact, we see that the former can be recovered from our quiver version (see Example \ref{classicalSegreEmbedding}).  The last application is related to character varieties of free abelian groups. Following the stream of ideas from Section \ref{segreEmbeddingForQuiverRepresentations}, we finally see in Section \ref{characterVarietiesSection} how tensorization of certain quiver representations can be used to obtain distinguished closed subschemes of the $\text{GL}(n)$-character variety of the free abelian group $\mathbb{Z}^r$.
	\newline
	\newline
	\textbf{Funding.} This work was supported by Fundação para a Ciência e a Tecnologia, I.P. [grant UI/BD/154369/2023]; and partially supported by Centro de Matemática da Universidade do Porto [grants UIDB/00144/2020, UIDP/00144/2020].
	\newline
	\newline
	\textbf{Acknowledgements.}
	I am deeply grateful with my adivsor, Prof. Peter B. Gothen, for introducing me to the topic, for stimulating conversations and for his help on more than one technical aspect of this work. Also, Veronica Calvo deserves a mention for her help on the proof of Lemma \ref{algebraicMorphismFromJordanQuivers}.
	\section{Representation theory of tensor quivers}\label{representationTheoryOfTensorQuivers}
	\subsection{Basic definitions}\label{basicDefinitions}
	We recall that a \emph{quiver} is a finite directed graph, that is, a tuple $Q=(Q_0,Q_1,h,t)$ where $Q_0$ is a finite set of vertices, $Q_1$ is a finite set of edges and $h,t:Q_1\to Q_0$ maps assigning to each edge its corresponding head and tail.  Let us present some examples of quivers that will play an important role in what follows:
	\noindent
	\begin{example}
		A quiver as in the following figure is known as a type $A_m$ quiver. 
		\vspace{-4.5mm}
		\begin{figure}[H]
			\begin{center}
				\begin{tikzcd}[scale cd=0.8, column sep =large, row sep =large]
					\bullet_m \arrow[r,"\alpha_m"]& \bullet_{m-1} \arrow[r,"\alpha_{m-1}"] &\bullet_{m-2} \arrow[r]&\cdots \arrow[r]& \bullet_{1}
				\end{tikzcd}
			\end{center}
			\caption{\label{typeAmQuiver}}
		\end{figure}

	\end{example}
	
	\begin{example}\label{kroneckerQuiverExample} The $n$-edge \emph{Kronecker quiver} is the quiver having two vertices, $\{1,2\}$, and $n$-edges, $\alpha_1,\ldots,\alpha_n$ such that $t\alpha_i=1$ and $h\alpha_i=2$ for all $i=1,\ldots,n$ as depicted in the figure below.
	\end{example}
	\vspace{-5.5mm}
	\begin{figure}[H]
		\begin{tikzcd}[scale cd=0.8]
			\bullet \arrow[to=1-3,bend left=25,"\alpha_1"]\arrow[to=1-3,bend right=25,"\alpha_n",']& \vdots & \bullet
		\end{tikzcd}
		\caption{\label{kroneckerQuiver}}
	\end{figure}
	\vspace{-5.5mm}
	\begin{example}\label{oppositeQuiverExample}
		Given a quiver $Q$, the \emph{opposite quiver} $Q^{op}$ is the quiver such that $Q_0^{op}=Q_0$ and $Q_1^{op}=\{\alpha^*\}_{\alpha\in Q_1}$ with $h\alpha^*=t\alpha$ and $t\alpha^*=h\alpha$. In other words, $Q^{op}$ is the quiver obtained from $Q$ by reversing its arrows. 
	\end{example}
	\begin{example}\label{tensorQuiverExample}
		Let $Q',Q''$ be two quivers. We define the \emph{tensor quiver} $Q'\otimes Q''$ by the following data: $(Q'\otimes Q'')_0=Q'_0\times Q''_0$, $(Q'\otimes Q'')_1=(Q'_1\times Q''_0)\sqcup (Q'_0\times Q''_1)$ and for $(\alpha,j)\in Q_1'\times Q_0'',(i,\beta)\in Q_0'\times Q_1''$ we set $h(\alpha,j)=(h\alpha,j),t(\alpha,j)=(t\alpha,j)$ and $h(i,\beta)=(i,h\beta),t(i,\beta)=(i,t\beta)$ respectively. Figure \ref{tensorProductA3Quivers} shows a concrete example of a tensor quiver. This is obtained by taking the ``tensor product'' of two quivers of type $A_3$. 
		\begin{figure}[H]
			\begin{center}
				\begin{tikzcd}[scale cd=0.8, column sep =large, row sep =large]
					& & \bullet_{(1,3)} \arrow[dr,,"(1{,}\,\alpha''_3)"]&&  \\
					&\bullet_{(2,3)}\arrow[ur,"(\alpha'_2{,} \, 3)"] \arrow[dr,"(2{,}\,\alpha''_3)"]& &\bullet_{(1,2)}   \arrow[dr,"(1{,}\,\alpha''_2)"]&   \\
					\bullet_{(3,3)} \arrow[ur,"(\alpha'_3{,} \, 3)"] \arrow[dr,"(3{,}\,\alpha''_3)",']&  & 	\bullet_{(2,2)} \arrow[ur,"(\alpha'_2{,} \, 2)",'] \arrow[dr,"(2{,}\,\alpha''_2)",']&  & 	\bullet_{(1,1)}\\
					&\bullet_{(3,2)}\arrow[ur,"(\alpha'_3{,} \, 2)"]  \arrow[dr,"(3{,}\,\alpha''_2)",']& &\bullet_{(2,1)} \arrow[ur,"(\alpha'_2{,} \, 1)",']&  \\
					& & \bullet_{(3,1)} \arrow[ur,"(\alpha'_3{,} \, 1)",'] &&
				\end{tikzcd}.
			\end{center}
			\caption{\label{tensorProductA3Quivers} Tensor product of two $A_3$ quivers}
		\end{figure}
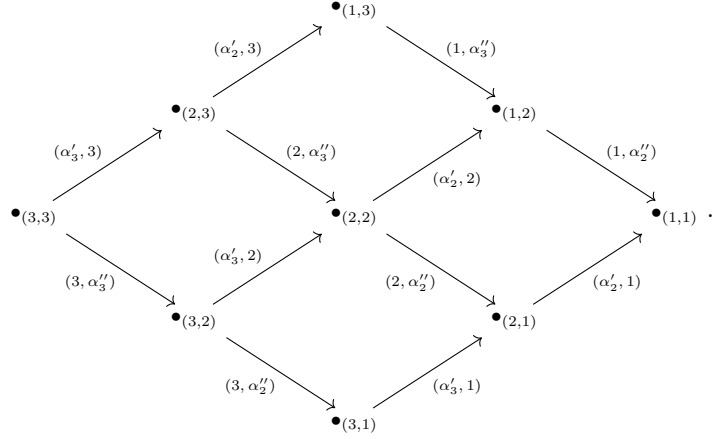
	\end{example}
	We fix a commutative ringed space $(X,\mathcal{O}_X)$ once and for all and we let $\mathscr{M}=\{\mathscr{M}_\alpha\}_{\alpha\in Q_1}$ be a family of $\mathcal{O}_X$-modules.
	\begin{definition}
		A \emph{$\mathscr{M}$-twisted representation} of $Q$ is given by a tuple $((\mathcal{E}_i)_{i\in Q_0},(\varphi_\alpha)_{\alpha\in Q_1})$ where $\mathcal{E}_i$ is an $\mathcal{O}_X$-module for all $i\in Q_0$ and $\varphi_{\alpha}:\mathscr{M}_\alpha\otimes_{\mathcal{O}_X}\mathcal{E}_{t\alpha} \to \mathcal{E}_{h\alpha}$ is a morphism of $\mathcal{O}_X$-modules for all $\alpha\in Q_1$.
	\end{definition}
	\begin{definition}
		Let $\mathcal{E},\mathcal{E}'$ be two $\mathscr{M}$-twisted representations of a quiver $Q$. A \emph{morphism} $f:\mathcal{E}\to \mathcal{E}'$ is given by a family $(f_i:\mathcal{E}_i\to \mathcal{E}'_i)_{i\in Q_0}$ with $f_i$ a morphism of $\mathcal{O}_X$-modules for which the diagram  
		\begin{equation}\label{twistedRepMorphismDiagram}
			\begin{tikzcd}[column sep =large, row sep =large]\
				\mathscr{M}_\alpha\otimes_{\mathcal{O}_X}\mathcal{E}_{t\alpha} \arrow[r,"\varphi_\alpha"] \arrow[d,'," \text{Id}\otimes f_{t\alpha}"] & \mathcal{E}_{h\alpha}\arrow[d,"f_{h\alpha}"] \\
				\mathscr{M}_\alpha\otimes_{\mathcal{O}_X} \mathcal{E}'_{t\alpha}\arrow[r,',"\varphi'_\alpha"] & \mathcal{E}'_{h\alpha}
			\end{tikzcd}
		\end{equation}
		commutes for all $\alpha\in Q_1$. 
	\end{definition}
	Thus, $\mathscr{M}$-twisted representations of $Q$ define a category which we denote $\text{Rep}(\mathscr{M}Q)$. 
	\begin{remark}\label{representationsInOtherCategories}
		We can restrict and work instead with the full subcategories of coherent sheaves over a compact Kähler manifold or the category of (quasi)-coherent sheaves over a scheme. All of our results remain valid on these. 
	\end{remark}
	One of the most prominent examples of twisted representations of a quiver is that of Higgs bundles:
	\begin{example}\label{exampleHiggsBundles}
		Suppose that $X$ is a compact Riemann surface. Recall, from the introduction, that a \emph{Higgs bundle} over $X$ is a pair $(\mathcal{E},\phi:\mathcal{E}\to \mathcal{E}\otimes K_X)$.  One can think of such an object as a $K_X^*$-twisted quiver representation, $K_X^*\to X$ being the dual of the canonical bundle, of the so-called \emph{Jordan quiver}, $Q_J$, in Figure \ref{JordanQuiver}.
		\vspace{-9mm}
		\begin{figure}[H]
			\begin{center}
				\begin{tikzcd}[scale cd=0.8, column sep =normal, row sep =normal]
					\bullet \arrow[loop,in=45,out=315,distance= 20mm]
				\end{tikzcd}
			\end{center}
			\vspace{-9mm}
			\caption{\label{JordanQuiver}The Jordan quiver $Q_J$}
		\end{figure}
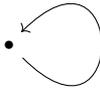
	\end{example}	
	An approach for studying representations of a quiver $Q$ has been to consider its so-called path algebra and studying instead the left modules over this algebra. To make this clear,  let us give some definitions:
	\begin{definition}\label{defPath}
		A \emph{non-trivial path} in $Q$ is a sequence $\alpha_1\alpha_2\cdots\alpha_k$ of edges such that $t\alpha_n=h\alpha_{n+1}$ for all $n=1,\ldots,k$. On the other hand, a \emph{trivial path} is the path that starts and terminates at $i$ for every vertex $i\in Q_0$. We denote this path by $e_i$.
	\end{definition}
	\begin{definition}
		The \emph{length} of a path $p$, which we denote by $|p|$, is given by the number of edges composing it.
	\end{definition}
	For instance,  those paths of length zero are the trivial paths $e_i$ for all $i\in Q_0$ and a path of length one would be any edge of the quiver $Q$. 
	
	\subsection{The path algebra}  We closely follow Álvarez-Cónsul and García-Prada \cite[Section 5.1]{AlvarezConsulGarciaPradaHKCorrespondenceQuiversVortices}. Fix a quiver $Q=(Q_0,Q_1,h,t)$ and consider the free $\mathcal{O}_X$-module
	$$
	\mathcal{B}=\bigoplus_{i\in Q_0}\mathcal{O}_X\cdot e_i
	$$
	with structure of commutative $\mathcal{O}_X$-algebra given by $e_i\cdot e_j=\delta_{ij}e_i$ for all $i,j\in Q_0$. Let $\{\mathscr{M}_\alpha\}_{\alpha\in Q_1}$ be a collection of $\mathcal{O}_X$-modules. Each of the $\mathscr{M}_\alpha$ can be given structure of $\mathcal{B}$-bimodule as follows: for all open subset $U\subseteq X$ and for every $m\in \mathscr{M}_\alpha(U)$, 
	$$
	e_i\cdot m=\begin{cases}
		m & \text{if } h\alpha=i\\
		0 & \text{otherwise} \\
	\end{cases}\ , \ m\cdot e_i=\begin{cases}
		m & \text{if }t\alpha=i \\ 
		0 & \text{otherwise}. 
	\end{cases}
	$$
	\begin{definition}
		Let $\mathscr{M}=\bigoplus_{\alpha\in Q_1}\mathscr{M}_\alpha$. The $\mathscr{M}$-\emph{twisted path algebra} of $Q$ is the tensor algebra of $\mathscr{M}$ over $\mathcal{B}$. We denote this algebra as $\mathcal{T}_\mathscr{M}\mathcal{A}_Q$. 
	\end{definition}
	\begin{remark}
		Since $Q$ has a finite number of vertices, 
		\begin{equation}\label{UnitPathAlgebra}
			1_{\mathcal{T}_\mathscr{M}\mathcal{A}_{Q}}=\oplus_{i\in Q_0}e_i
		\end{equation}
		is an unit for the path algebra $\mathcal{T}_{\mathscr{M}}\mathcal{A}_Q$.
	\end{remark}
	\begin{remark}
		Equivalently, following Gothen and King \cite{GothenKingHomologicalAlgebra}, one can define the twisted path algebra $\mathcal{T}_\mathscr{M}\mathcal{A}_Q$ as follows: for every non-trivial path $p=\alpha_1\alpha_2\cdots\alpha_k$, let $\mathscr{M}_p=\mathscr{M}_{\alpha_1}\otimes_{\mathcal{O}_X}\cdots\otimes_{\mathcal{O}_X}\mathscr{M}_{\alpha_k}$ and for every trivial path $p=e_i$ let $\mathscr{M}_i:=\mathscr{M}_p=\mathcal{O}_X$  so one can also define $\mathcal{T}_\mathscr{M}\mathcal{A}_Q=\bigoplus_{p \text{ path}}\mathscr{M}_p$ with multiplication given by:
		$$
		m_p\cdot m_q=\begin{cases}
			m_p\otimes m_q& \text{if } hq=tp, \\
			0 & \text{otherwise}
		\end{cases}
		$$
		for $p,q$ any two paths of the quiver $Q$ and $m_p\in \mathscr{M}_p(U),m_q\in \mathscr{M}_q(U)$ with $U\subseteq X$ an open set. 	Here, $hp$ and $tp$ denote the head and the tail of the path $p$ respectively or, in other words, the vertices where the path terminates and begins. 
	\end{remark}
In what follows, we will adopt either perspective on the twisted path algebra, depending on which best suits our purposes.
	\subsection{Modules over the path algebra}
	Now, we define modules over the twisted path algebra of a quiver $Q$ and we relate these to representations of $Q$ in the category of locally ringed spaces:
	\begin{definition}\label{defModuleOverPathAlgebra}
		A \emph{left $\mathcal{T}_\mathscr{M}\mathcal{A}_Q$-module} or simply a \emph{$\mathcal{T}_\mathscr{M}\mathcal{A}_Q$-module} is an $\mathcal{O}_X$-module  which is also a left $\mathcal{T}_\mathscr{M}\mathcal{A}_Q$-module. Equivalently, a $\mathcal{T}_\mathscr{M}\mathcal{A}_Q$-module is an $\mathcal{O}_X$-module $\mathcal{M}$ together with an $\mathcal{O}_X$-linear map
		$$
		\mu: \mathcal{T}_\mathscr{M}\mathcal{A}_Q\otimes_{\mathcal{O}_X}\mathcal{M}  \longrightarrow  \mathcal{M}
		$$
		for which the usual axioms of left modules over an algebra hold. 
		A \emph{morphism of $\mathcal{T}_\mathscr{M}\mathcal{A}_Q$-modules} is an $\mathcal{O}_X$-linear map $f:\mathcal{M}\to \mathcal{N}$ such that the diagram 
		\begin{equation}\label{modMorphismDiagram}
			\begin{tikzcd}[column sep =large, row sep =large]\
				\mathcal{T}_\mathscr{M}\mathcal{A}_Q\otimes_{\mathcal{O}_X} \mathcal{M}\arrow[r,"\mu"] \arrow[d,',"\text{Id}\otimes f"] & \mathcal{M}\arrow[d,"f"] \\
				\mathcal{T}_\mathscr{M}\mathcal{A}_Q\otimes_{\mathcal{O}_X} \mathcal{N} \arrow[r,',"\mu"] & \mathcal{N}
			\end{tikzcd}
		\end{equation}
		commutes.
	\end{definition}
	
	The following proposition generalizes the well known fact for classical quiver representations relating modules and quiver representations  (see, for instance, \cite[Theorem 1.7]{libroKirillov}). Recently, Bartocci, Bruzzo and Rava \cite{BartocciHomologyTwistedQuiverBundles} gave a proof that works in the more general framework of twisted representations in the category of $\mathcal{O}_X$-modules. They ask, however, for the local projectivity of the twisting $\mathcal{O}_X$-modules $\mathscr{M}_\alpha$. After reviewing their proof we think that this hypothesis may be avoided and that this can be made to work without imposing any conditions on the twisting modules:
	\begin{proposition}\label{equivCategoriesTwistedModulesReps}
		The category of left $\mathcal{T}_\mathscr{M}\mathcal{A}_Q$-modules, {\normalfont $\mathcal{T}_\mathscr{M}\mathcal{A}_Q$-mod},  is equivalent to the category {\normalfont $\text{Rep}(\mathscr{M} Q)$}. 
	\end{proposition}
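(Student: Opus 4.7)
The plan is to exhibit quasi-inverse functors $F:\text{Rep}(\mathscr{M}Q)\to \mathcal{T}_\mathscr{M}\mathcal{A}_Q\text{-mod}$ and $G:\mathcal{T}_\mathscr{M}\mathcal{A}_Q\text{-mod}\to \text{Rep}(\mathscr{M}Q)$, following the classical picture but paying close attention to the $\mathcal{O}_X$-module bookkeeping so that no local projectivity of the twisting sheaves $\mathscr{M}_\alpha$ is required.

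For $F$, given $\mathcal{E}=((\mathcal{E}_i),(\varphi_\alpha))$ I set $F(\mathcal{E})=\bigoplus_{i\in Q_0}\mathcal{E}_i$. Each idempotent $e_i$ is declared to act as the projection onto the $i$-th summand, and a local section of $\mathscr{M}_\alpha$ acts by $\varphi_\alpha$ on the $t\alpha$-summand and by zero on the remaining summands; this produces an $\mathcal{O}_X$-linear map $\mathscr{M}_\alpha\otimes_{\mathcal{O}_X}F(\mathcal{E})\to F(\mathcal{E})$ because tensor product distributes over direct sums. Extending to the whole path algebra via its path description, a non-trivial path $p=\alpha_1\cdots\alpha_k$ acts on the $\mathcal{E}_{t\alpha_k}$-summand as the iterated composite $\varphi_{\alpha_1}\circ(\text{Id}\otimes\varphi_{\alpha_2})\circ\cdots\circ(\text{Id}^{\otimes (k-1)}\otimes\varphi_{\alpha_k})$ and vanishes on the rest; the multiplication rule ($m_p\cdot m_q=m_p\otimes m_q$ when $hq=tp$, zero otherwise) is automatic, since whenever the head/tail condition fails the composite factors through a zero morphism. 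A morphism $(f_i):\mathcal{E}\to\mathcal{E}'$ is sent to $\bigoplus f_i$, and commutativity of (\ref{twistedRepMorphismDiagram}) is exactly the statement that this direct-sum map is $\mathcal{T}_\mathscr{M}\mathcal{A}_Q$-linear.

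For $G$, given $(\mathcal{M},\mu)$ the identity (\ref{UnitPathAlgebra}) together with $e_ie_j=\delta_{ij}e_i$ produces an internal decomposition $\mathcal{M}=\bigoplus_{i\in Q_0}e_i\mathcal{M}$, so I take $G(\mathcal{M})_i:=e_i\mathcal{M}$. The crux of sidestepping local projectivity appears here: since $e_{t\alpha}\mathcal{M}$ is a direct summand of $\mathcal{M}$, the inclusion $\mathscr{M}_\alpha\otimes_{\mathcal{O}_X}e_{t\alpha}\mathcal{M}\hookrightarrow \mathscr{M}_\alpha\otimes_{\mathcal{O}_X}\mathcal{M}$ is automatically a split monomorphism --- every functor preserves split monomorphisms --- with no flatness needed on $\mathscr{M}_\alpha$. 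Composing this inclusion with $\mu$ produces a morphism into $\mathcal{M}$ whose image lies in $e_{h\alpha}\mathcal{M}$ by the bimodule rule $e_{h\alpha}\cdot m=m$ for $m\in\mathscr{M}_\alpha$; this composite is the desired $\varphi_\alpha:\mathscr{M}_\alpha\otimes_{\mathcal{O}_X}e_{t\alpha}\mathcal{M}\to e_{h\alpha}\mathcal{M}$. A morphism $f$ of modules commutes with the idempotents, hence restricts to maps $f_i:e_i\mathcal{M}\to e_i\mathcal{N}$, and the square (\ref{twistedRepMorphismDiagram}) is obtained by restricting (\ref{modMorphismDiagram}) to the appropriate summand.

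It then remains to verify $GF\cong \text{Id}$ and $FG\cong \text{Id}$ naturally. In the first direction, $e_i\cdot\bigoplus_j\mathcal{E}_j=\mathcal{E}_i$ on the nose and the restricted action recovers each $\varphi_\alpha$ tautologically; in the second, $\bigoplus_i e_i\mathcal{M}=\mathcal{M}$ canonically, and the action reconstructed from the $\varphi_\alpha$ coincides with $\mu$ because $\mathcal{T}_\mathscr{M}\mathcal{A}_Q$ is generated as an $\mathcal{O}_X$-algebra by the idempotents $e_i$ and the sheaves $\mathscr{M}_\alpha$. The main conceptual obstacle, and the point where this argument must diverge from Bartocci--Bruzzo--Rava, is precisely the justification that $\mu$ is determined by its restrictions to the pieces $\mathscr{M}_\alpha\otimes_{\mathcal{O}_X} e_{t\alpha}\mathcal{M}$ without any projectivity hypothesis on $\mathscr{M}_\alpha$; as sketched above, this follows purely formally from the preservation of direct summands under tensor product.
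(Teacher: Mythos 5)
Your proposal is correct and follows essentially the same route as the paper: both directions use the decomposition $\mathcal{M}=\bigoplus_{i\in Q_0}e_i\mathcal{M}$ induced by the orthogonal idempotents, build the module structure from the $\varphi_\alpha$ by letting paths act as iterated composites (the paper phrases this via the Tensor--Hom adjunction and the universal property of the tensor algebra, you via the path description of $\mathcal{T}_\mathscr{M}\mathcal{A}_Q$, which is the same content), and recover the $\varphi_\alpha$ from $\mu$ by restriction to $\mathscr{M}_\alpha\otimes_{\mathcal{O}_X}e_{t\alpha}\mathcal{M}$. Your explicit remark that this restriction is legitimate because $\mathscr{M}_\alpha\otimes_{\mathcal{O}_X}(-)$ preserves the \emph{split} monomorphism $e_{t\alpha}\mathcal{M}\hookrightarrow\mathcal{M}$ is a welcome sharpening of the step where the paper merely asserts that no local projectivity of the twisting sheaves is needed.
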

	\begin{proof}
		Let $\mathcal{E}=((\mathcal{E}_i)_{i\in Q_0},(\varphi_\alpha)_{\alpha\in Q_1})$ be a $\mathscr{M}$-twisted representation. We endow the $\mathcal{O}_X$-module $G(\mathcal{E})=\bigoplus_{i\in Q_0}\mathcal{E}_i$ with structure of $\mathcal{T}_\mathscr{M}\mathcal{A}_Q$-module. This, following Definition \ref{defModuleOverPathAlgebra}, is the same as giving an $\mathcal{O}_X$-linear map $\mu:\mathcal{T}_\mathscr{M}\mathcal{A}_Q\otimes_{\mathcal{O}_{X}}G(\mathcal{E})\to G(\mathcal{E})$ or, equivalently by the Tensor-Hom adjunction \cite[p.180]{GoertzAlgebraicGeometry}, an $\mathcal{O}_X$-linear map $\mathcal{T}_\mathscr{M}\mathcal{A}_Q\to \mathcal{E}nd_{\mathcal{O}_X}(G(\mathcal{E}))$. 
		\\
		\\
		The $\mathcal{O}_X$-module $G(\mathcal{E})$ can be given structure of left $\mathcal{B}$-module by taking $e_i\cdot s_j=\delta_{ij}s_j$ for $s_j$ a section of $\mathcal{E}_j$ and $i\in Q_0$. By the universal property of the tensor algebra $\mathcal{T}_\mathscr{M}\mathcal{A}_Q$, to give a map $\mathcal{T}_\mathscr{M}\mathcal{A}_Q\to \mathcal{E}nd_\mathcal{B}(G(\mathcal{E}))$ is enough to give a $\mathcal{B}$-linear map $\mathscr{M}\to\mathcal{E}nd_\mathcal{B}(G(\mathcal{E}))$. For a fixed $m_\alpha\in \mathscr{M}_\alpha(U)$ with $U\subseteq X$ open, let $\psi_{m_\alpha}:\mathcal{E}_{t\alpha}|_U\to \mathcal{E}_{h\alpha}|_U$ be the sheaf morphism given by the  composition:
		$$
		\begin{matrix}
			\mathcal{E}_{t\alpha}(V) &\longrightarrow & \mathscr{M}_\alpha(V)\otimes_{\mathcal{O}_{X}(V)}\mathcal{E}_{t\alpha}(V) & \overset{\widetilde{\varphi}_\alpha}{\longrightarrow}& \mathcal{E}_{h\alpha}(V)\\ 
			s_{t\alpha} &\longmapsto& m_\alpha \otimes s_{t\alpha} & \longmapsto& \widetilde{\varphi}_\alpha(m_\alpha\otimes s_{t\alpha}) 
		\end{matrix}
		$$
		where $V\subseteq U$ is an open set and $\widetilde{\varphi}_\alpha$ is the presheaf morphism corresponding to the morphism $\varphi_\alpha:\mathscr{M}_\alpha\otimes_{\mathcal{O}_X}\mathcal{E}_{t\alpha}\to \mathcal{E}_{h\alpha}$. With this in mind we define for all open $U\subseteq X$
		$$
		\begin{matrix}
			\mathscr{M}(U)&\longrightarrow & \text{End}_{B|_U}(G(\mathcal{E})|_U) \\
			m_\alpha & \longmapsto & \psi_{m_\alpha}
		\end{matrix}.
		$$
		Composing the obtained map on the tensor algebra $\mathcal{T}_\mathscr{M}\mathcal{A}_Q\to \mathcal{E}nd_\mathcal{B}(G(\mathcal{E}))$ with the inclusion $\mathcal{E}nd_\mathcal{B}(G(\mathcal{E}))\to \mathcal{E}nd_{\mathcal{O}_X}(G(\mathcal{E}))$ we get the desired $\mathcal{T}_\mathscr{M}\mathcal{A}_Q$-module structure on $G(\mathcal{E})$.  
		\\
		\\
		Now, let $f=(f_i)_{i\in Q_0}:\mathcal{E}\to \mathcal{E}'$ be a morphism of $\mathscr{M}$-twisted representations. We claim that $G(f)=\oplus_{i\in Q_0}f_i:G(\mathcal{E})\to G(\mathcal{E}')$ is a morphism of $\mathcal{T}_\mathscr{M}\mathcal{A}_Q$-modules so we need to check the commutativity of Diagram \ref{modMorphismDiagram}. This essentially follows from the commutativity of Diagram \ref{twistedRepMorphismDiagram}. 
		\\
		\\
		Conversely, let $\mathcal{E}$ be a left $\mathcal{T}_\mathscr{M}\mathcal{A}_Q$-module with structure morphism $\mu:\mathcal{T}_\mathscr{M}\mathcal{A}_Q\otimes_{\mathcal{O}_{X}}\mathcal{E}\to \mathcal{E}$. Let $\mathcal{E}_i:=\mu(\mathscr{M}_i\otimes_{\mathcal{O}_X} \mathcal{E})\subseteq\mathcal{E}$. Because of the condition $e_ie_j=\delta_{ij}$ for $i,j\in Q_0$ we have that $\mathcal{E}\cong \bigoplus_{i\in Q_0}\mathcal{E}_i$. For all $\alpha\in Q_1$, let $\varphi_{\alpha}:\mathcal{E}_{t\alpha}\to \mathcal{E}_{h\alpha}$ be the morphism of $\mathcal{O}_X$-modules associated to the morphism of presheaves of $\mathcal{O}_X$-modules given, for all $U\subseteq X$ open subset, by 
		$$
		\begin{matrix}
			\mathscr{M}_\alpha(U)\otimes_{\mathcal{O}_{X}(U)}\mathcal{E}_{t\alpha}(U)&\longrightarrow& \mathcal{E}_{h\alpha}(U)\\
			m_\alpha\otimes s_{t\alpha}&\longmapsto & \widetilde{\mu}(m_\alpha\otimes s_{t\alpha})
		\end{matrix}.
		$$
		Here, $\widetilde{\mu}$ is the presheaf morphism corresponding to the structure morphism $\mu$. The data $F(\mathcal{E})=((\mathcal{E}_i)_{i\in Q_0},(\varphi_{\alpha})_{\alpha\in Q_1})$ is that of a $\mathscr{M}$-twisted representation of $Q$. Finally, let $f:\mathcal{E}\to \mathcal{E}'$ be a morphism of $\mathcal{T}_\mathscr{M}\mathcal{A}_Q$-modules and let $F(f)=(f|_{\mathcal{E}_i}:\mathcal{E}_i\to \mathcal{E}'_i)_{i\in Q_0}$. The latter is a morphism of $\mathscr{M}$-twisted representations mainly because the commutativity of Diagram \ref{modMorphismDiagram}.
	\end{proof}
	
	\subsection{Tensor products of modules}\label{tensorProductOfModulesSection}
	Towards our notion of tensor product of quiver representations, we will now study tensor products of modules over the path algebra. Later on, we will see that these correspond to representations of the tensor quiver introduced in Example \ref{tensorQuiverExample} with some additional data. 
	\\
	\\
	Recall, from Definition \ref{defModuleOverPathAlgebra}, that given an $\mathcal{O}_X$-module $\mathcal{F}$, a $\mathcal{T}_\mathscr{M}\mathcal{A}_Q$-module structure on $\mathcal{F}$ is given by an $\mathcal{O}_X$-linear map 
	$$
	\mu_Q:\mathcal{T}_\mathscr{M}\mathcal{A}_Q\otimes_{\mathcal{O}_X}\mathcal{F}\to \mathcal{F} .
	$$
	With this in mind, let $Q'$, $Q''$ be quivers and $\mathscr{M}'=\{\mathscr{M}'_\alpha\}_{\alpha\in Q_1'}$, $\mathscr{M}''=\{\mathscr{M}''_\beta\}_{\beta\in Q_1''}$ be collections of $\mathcal{O}_X$-modules. Note that the tensor product $\mathcal{M}\otimes_{\mathcal{O}_X}\mathcal{N}$, with $\mathcal{M}$ a $\mathcal{T}_{\mathscr{M}'}\mathcal{A}_{Q'}$-module and $\mathcal{N}$ a $\mathcal{T}_{\mathscr{M}''}\mathcal{A}_{Q''}$-module, carries a canonical left $\mathcal{T}_{\mathscr{M}'}\mathcal{A}_{Q'}\otimes_{\mathcal{O}_X}\mathcal{T}_{\mathscr{M}''}\mathcal{A}_{Q''}$-module structure given by
	\begin{equation}\label{tensorProductOfModulesOverPathAlgebras}
		(\mathcal{T}_{\mathscr{M}'}\mathcal{A}_{Q'}\otimes_{\mathcal{O}_X}\mathcal{T}_{\mathscr{M}''}\mathcal{A}_{Q''})\otimes _{\mathcal{O}_X}(\mathcal{M}\otimes_{\mathcal{O}_X}\mathcal{N})\overset{\mu_{Q'}\otimes\mu_{Q''}}{\longrightarrow} \mathcal{M}\otimes_{\mathcal{O}_X}\mathcal{N}.
	\end{equation}
	On the other hand, the tensor product quiver $Q'\otimes Q''$ can be given canonically a $\mathscr{M}$-twisted path algebra $\mathcal{T}_\mathscr{M}\mathcal{A}_{Q'\otimes Q''}$ for
	$$
	\mathscr{M}=\bigoplus_{(\alpha,j)}\mathscr{M}_{(\alpha,j)}\oplus \bigoplus_{(i,\beta)}\mathscr{M}_{(i,\beta)},
	$$
	$\mathscr{M}_{(\alpha,j)}=\mathscr{M}_\alpha'$ and $\mathscr{M}_{(i,\beta)}=\mathscr{M}_\beta''$ for all $(\alpha,j),\ (i,\beta)\in (Q'\otimes Q'')_1$. 	Now we see that the left $\mathcal{T}_{\mathscr{M}'}\mathcal{A}_{Q'}\otimes_{\mathcal{O}_X}\mathcal{T}_{\mathscr{M}''}\mathcal{A}_{Q''}$-module $\mathcal{M}\otimes_{\mathcal{O}_{X}}\mathcal{N}$ can be seen as left module over the path algebra $\mathcal{T}_\mathscr{M}\mathcal{A}_{Q'\otimes Q''}$ modulo some ideal. This is inspired by Herschend's result for classical quiver representations \cite[Proposition 3]{HerschendTensorProducts}.
	\begin{proposition}\label{isomorphismTensorProductTwistedPathAlgebras}
		There is an isomorphism of twisted path algebras 
		$$
		\mathcal{T}_\mathscr{M}\mathcal{A}_{Q'\otimes Q''}/\mathcal{I}\cong \mathcal{T}_{\mathscr{M}'}\mathcal{A}_{Q'}\otimes_{\mathcal{O}_X}\mathcal{T}_{\mathscr{M}''}\mathcal{A}_{Q''} 
		$$
		where $\mathcal{I}$ is the two-sided ideal subsheaf associated to the ideal presheaf $\widetilde{\mathcal{I}}$ generated, for all open subset $U\subseteq X$, by all the differences 
		\begin{equation}\label{differencesIdeal}
			m_{(h\alpha,\beta)}\otimes m_{(\alpha,t\beta)}-m_{(\alpha,h\beta)}\otimes m_{(t\alpha,\beta)}, 
		\end{equation}
		for which $ m_{(\alpha,t\beta)}=m_{(\alpha,h\beta)}\in \mathscr{M}_{(\alpha,t\beta)}(U)=\mathscr{M}_{(\alpha,h\beta)}(U)=\mathscr{M}_\alpha'(U)$ and $ m_{(t\alpha,\beta)}=m_{(h\alpha,\beta)}\in \mathscr{M}_{(t\alpha,\beta)}(U)=\mathscr{M}_{(h\alpha,\beta)}(U)=\mathscr{M}_\beta''(U)$.
	\end{proposition}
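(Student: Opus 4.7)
The plan is to construct mutually inverse morphisms of sheaves of $\mathcal{O}_X$-algebras between the two sides. In the forward direction, the universal property of $\mathcal{T}_\mathscr{M}\mathcal{A}_{Q'\otimes Q''}$ provides an algebra morphism $\phi$ to $\mathcal{T}_{\mathscr{M}'}\mathcal{A}_{Q'}\otimes_{\mathcal{O}_X}\mathcal{T}_{\mathscr{M}''}\mathcal{A}_{Q''}$ determined by its values on generators: on local sections one sets $\phi(e_{(i,j)}) = e_i\otimes e_j$, $\phi(m_{(\alpha,j)}) = m_\alpha\otimes e_j$ and $\phi(m_{(i,\beta)}) = e_i\otimes m_\beta$. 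A direct computation, exploiting the $\mathcal{B}$-bimodule identities $e_{h\alpha}\cdot m_\alpha = m_\alpha$ and $m_\beta\cdot e_{t\beta} = m_\beta$, gives
\begin{align*}
\phi(m_{(h\alpha,\beta)}\otimes m_{(\alpha,t\beta)}) &= (e_{h\alpha}\otimes m_\beta)(m_\alpha\otimes e_{t\beta}) = m_\alpha\otimes m_\beta \\
&= (m_\alpha\otimes e_{h\beta})(e_{t\alpha}\otimes m_\beta) = \phi(m_{(\alpha,h\beta)}\otimes m_{(t\alpha,\beta)}),
\end{align*}
so every generator of $\widetilde{\mathcal{I}}$ is killed by $\phi$. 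Sheafification then gives $\mathcal{I}\subseteq\ker\phi$, and $\phi$ descends to a morphism $\bar\phi$ on the quotient.

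For an inverse $\psi$ I decompose the target as $\bigoplus_{(p',p'')}\mathscr{M}'_{p'}\otimes_{\mathcal{O}_X}\mathscr{M}''_{p''}$ and, for each pair of paths $p' = \alpha_1\cdots\alpha_k$ and $p'' = \beta_1\cdots\beta_\ell$, fix the canonical shuffle path
\[
P(p',p'') = (hp',\beta_1)\cdots(hp',\beta_\ell)(\alpha_1,tp'')\cdots(\alpha_k,tp'')
\]
in $Q'\otimes Q''$. The twisting module $\mathscr{M}_{P(p',p'')}$ is canonically isomorphic to $\mathscr{M}'_{p'}\otimes\mathscr{M}''_{p''}$ via the symmetric braiding on $\mathcal{O}_X$-modules, and I define $\psi$ on the summand to be this isomorphism composed with the quotient map; trivial paths contribute $e_i\otimes e_j\mapsto e_{(i,j)}$.

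The main obstacle is showing $\psi$ is multiplicative. On a product $(m_{p_1'}\otimes m_{p_1''})(m_{p_2'}\otimes m_{p_2''}) = m_{p_1'p_2'}\otimes m_{p_1''p_2''}$ the value of $\psi$ is the class of $P(p_1'p_2',p_1''p_2'')$, whereas $\psi(m_{p_1'}\otimes m_{p_1''})\cdot\psi(m_{p_2'}\otimes m_{p_2''})$ is the class of the concatenation $P(p_1',p_1'')\cdot P(p_2',p_2'')$: the two agree on the outer $\beta$- and $\alpha$-blocks but differ in the middle, where an $\alpha$-block from $p_1'$ sits to the left of a $\beta$-block from $p_2''$ instead of to the right. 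An elementary transposition moving a single $\alpha$-letter past a single $\beta$-letter at this interface is precisely a generator (\ref{differencesIdeal}) of $\widetilde{\mathcal{I}}$; the necessary row/column compatibilities follow from the composition conditions $tp_1' = hp_2'$, $tp_1'' = hp_2''$ together with the telescoping head/tail identities within $p_1'$ and $p_2''$. A double induction on the lengths of the two middle blocks then identifies the two classes modulo $\mathcal{I}$. Once $\psi$ is known to be an algebra morphism, $\bar\phi\circ\psi = \mathrm{id}$ follows summand-by-summand from the construction, while $\psi\circ\bar\phi = \mathrm{id}$ holds on generators (trivial paths and single edges) and thus globally. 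All constructions are local and compatible with restriction to opens, so the sheaf-theoretic passage is benign throughout.
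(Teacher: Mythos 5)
Your proposal is correct and follows essentially the same route as the paper: the forward map is defined on the same generators, the inverse sends $m_{p'}\otimes m_{p''}$ to the same shuffle normal form (all $Q''$-letters at level $hp'$ placed to the left of all $Q'$-letters at level $tp''$), and the key combinatorial step is the same commutation of adjacent letters using the generators (\ref{differencesIdeal}) of $\widetilde{\mathcal{I}}$. The only organizational difference is that the paper shows these normal forms span the quotient and then checks mutual inverseness on generators, so that multiplicativity of $\psi$ comes for free from $\widetilde{\varphi}$ being an algebra isomorphism, whereas you verify multiplicativity of $\psi$ directly by the same induction; both versions also need the final observation, which the paper spells out and you compress into one sentence, that sheafification commutes with the presheaf quotient and tensor product.
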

	\begin{proof} For any given quiver $Q$, we will denote the presheaf of $\mathcal{O}_X$-algebras, $U\mapsto \bigoplus_{p \text{ path}}\mathscr{M}_p(U)$, as  $\widetilde{\mathcal{T}_\mathscr{M}\mathcal{A}_{Q}}$. Consider the morphism $\varphi:\widetilde{\mathcal{T}_\mathscr{M}\mathcal{A}_{Q'\otimes Q''}}\to \widetilde{\mathcal{T}_{\mathscr{M}'}\mathcal{A}_{Q'}}\otimes_{\mathcal{O}_X}\widetilde{\mathcal{T}_{\mathscr{M}''}\mathcal{A}_{Q''}}$ of presheaves of $\mathcal{O}_X$-algebras defined, on generators, by 
		$$
		\begin{matrix}
			1_{\mathscr{M}_{(i,j)}}&\longmapsto &  1_{\mathscr{M}_{i}'}\otimes 1_{\mathscr{M}_{j}''},\\  
			m_{(\alpha,j)} & \longmapsto & m_{(\alpha,j)}\otimes 1_{\mathscr{M}_{j}''},\\
			m_{(i,\beta)} & \longmapsto & 1_{\mathscr{M}_{i}'}\otimes m_{(i,\beta)}.
		\end{matrix}
		$$
		Note that  all the differences described in Equation \ref{differencesIdeal} are contained in the kernel of $\varphi$ so this map factors through the quotient by $\widetilde{\mathcal{I}}$ giving a map $\widetilde{\varphi}:\widetilde{\mathcal{T}_\mathscr{M}\mathcal{A}_{Q'\otimes Q''}}/\widetilde{\mathcal{I}}\to \widetilde{\mathcal{T}_{\mathscr{M}'}\mathcal{A}_{Q'}}\otimes_{\mathcal{O}_X}\widetilde{\mathcal{T}_{\mathscr{M}''}\mathcal{A}_{Q''}}$. We now show that $\widetilde{\varphi}$ is an isomorphism of $\mathcal{O}_X$-algebras and to this purpose we will give an inverse. Let $\psi: \widetilde{\mathcal{T}_{\mathscr{M}'}\mathcal{A}_{Q'}}\otimes_{\mathcal{O}_{X}}\widetilde{\mathcal{T}_{\mathscr{M}''}\mathcal{A}_{Q''}} \rightarrow \widetilde{\mathcal{T}_\mathscr{M}\mathcal{A}_{Q'\otimes Q''}}/\widetilde{\mathcal{I}}$ be the morphism of presheaves of $\mathcal{O}_X$-algebras given by 
		$$
		\begin{matrix}
			\psi_U: & \widetilde{\mathcal{T}_{\mathscr{M}'}\mathcal{A}_{Q'}}(U)\otimes_{\mathcal{O}_{X}(U)}\widetilde{\mathcal{T}_{\mathscr{M}''}\mathcal{A}_{Q''}}(U) & \longrightarrow & \widetilde{\mathcal{T}_\mathscr{M}\mathcal{A}_{Q'\otimes Q''}}(U)/\widetilde{\mathcal{I}}(U)\\
			& m_p\otimes m_q & \longmapsto & m_{(h\alpha_1,q)}\otimes m_{(p,t\beta_m)}\text{ mod }\widetilde{\mathcal{I}}(U).
		\end{matrix}
		$$
		Here $U\subseteq X$ is an open set and $p=\alpha_1\cdots\alpha_n$, $q=\beta_1\cdots\beta_m$ are paths in $Q'$ and $Q''$ respectively. Moreover, on the left-most side, $m_p=m_{\alpha_1}\otimes\cdots\otimes m_{\alpha_n}$, $m_{\alpha_k}\in \mathscr{M}_{\alpha_k}(U)$, and $m_q=m_{\beta_1}\otimes\cdots \otimes m_{\beta_m}$, $m_{\beta_k}\in\mathscr{M}_{\beta_k}(U)$, and on the right-most side $m_{(h\alpha_1,q)}=m_q$ and $m_{(p,t\beta_m)}=m_p$ now both understood as elements in $\mathscr{M}_{(h\alpha_1,\beta_1)}(U)\otimes\cdots \otimes \mathscr{M}_{(h\alpha_1,\beta_m)}(U)$ and $\mathscr{M}_{(\alpha_1,t\beta_m)}(U)\otimes\cdots\otimes \mathscr{M}_{(\alpha_n,t\beta_m)}(U)$ respectively.
		\\
		\\
		Following the same convention as before, for any open set $U\subseteq X$, we let $S(U)$ be the set of tensors of the form $m_{(hp,q)}\otimes m_{(p,tq)}\in \mathscr{M}_{(h\alpha,q)(p,t\beta)}(U)$ for any paths $p$, $q$ of $Q'$ and $Q''$ respectively. Let 
		$$
		r=(i_{n+1},\beta_n)(\alpha_n,j_n)\ldots(\alpha_2,j_2)(i_2,\beta_1)(\alpha_1,j_1)
		$$
		be any path of the tensor quiver $Q'\otimes Q''$ and let 
		$$
		m_r=m_{(i_{n+1},\beta_n)}\otimes m_{(\alpha_n,j_n)}\otimes \cdots\otimes m_{(\alpha_2,j_2)}\otimes m_{(i_2,\beta_1)}\otimes m_{(\alpha_1,j_1)}\in \mathscr{M}_r(U).
		$$ 
		We claim that this tensor is equivalent to some in $S(U)$ modulo $\widetilde{\mathcal{I}}(U)$.
		Successively replacing all the products, appearing in $m_r$, of the form $m_{(\alpha_k,j_k)}\otimes m_{(i_k,\beta_{k-1})}$ with $m_{(i_{k+1},\beta_{k-1})}\otimes m_{(\alpha_k,j_{k-1})}$ (here $m_{(\alpha_k,j_k)}=m_{(\alpha_k,j_{k-1})}$ and $m_{(i_k,\beta_{k-1})}=m_{(i_{k+1},\beta_{k-1})}$) we obtain a tensor of the form 
		$$
		m_{(i_{n+1},\beta_n)}\otimes\cdots\otimes m_{(i_{n+1},\beta_1)}\otimes m_{(\alpha_n,j_1)}\otimes \cdots\otimes m_{(\alpha_2,j_1)}\otimes m_{(\alpha_1,j_1)}
		$$ 
		and, moreover, by doing so we stay in the same class modulo $\widetilde{\mathcal{I}}(U)$.
		Hence the set $S(U) \text{ mod }\widetilde{\mathcal{I}}(U)$ spans $\widetilde{\mathcal{T}_\mathscr{M}\mathcal{A}_{Q'\otimes Q''}}(U)/\widetilde{\mathcal{I}}(U)$. Now it is easy to see that at the level of generators the maps $\widetilde{\varphi}$ and $\psi$ are inverse to one another and therefore $\widetilde{\varphi}$ is an isomorphism of presheaves of $\mathcal{O}_X$-algebras as claimed.
		\\
		\\
		Finally, we note that sheafification commutes with quotients which boils down to the fact that sheafification is an exact functor. Thus, the associated sheaf to the presheaf of $\mathcal{O}_X$-algebras $\widetilde{\mathcal{T}_\mathscr{M}\mathcal{A}_{Q'\otimes Q''}}/\widetilde{\mathcal{I}}$ is isomorphic to $\mathcal{T}_\mathscr{M}\mathcal{A}_{Q'\otimes Q''}/\mathcal{I}$. Also, we note that sheafification commutes with tensor products \cite[Lemma 17.16.2]{stacksProject}, hence the associated sheaf to the presheaf of $\mathcal{O}_X$-algebras $\widetilde{\mathcal{T}_{\mathscr{M}'}\mathcal{A}}_{Q'}\otimes_{\mathcal{O}_X}\widetilde{\mathcal{T}_{\mathscr{M}''}\mathcal{A}}_{Q''}$ is isomorphic to $\mathcal{T}_{\mathscr{M}'}\mathcal{A}_{Q'}\otimes_{\mathcal{O}_X}\mathcal{T}_{\mathscr{M}''}\mathcal{A}_{Q''}$. The statement of the proposition then follows as isomorphic presheaves have isomorphic associated sheaves. 
	\end{proof}
	
	The ideal $\mathcal{I}$ of the path algebra $\mathcal{T}_\mathscr{M}\mathcal{A}_{Q'\otimes Q''}$ of the previous proposition turns out to be the ideal generated by a set of relations on the tensor quiver $Q'\otimes Q''$. We will see that modules over the path algebra modulo such an ideal correspond to representations of the associated quiver for which the morphisms at each one of the edges of the quiver satisfy particular conditions
	\subsection{Quivers with relations}\label{quiversWithRelationsSection}
	\begin{definition}\label{definitionRelation}
		A \emph{relation} on a quiver $Q$ is a formal sum of the form
		$$
		\sum_{\substack{p\text{ path, }|p|\geq 2\\ tp=i,\ hp=j}}c_p\cdot p, \ \ c_p\in \mathcal{O}_X(X)
		$$
		where all but finitely many terms are zero.
	\end{definition}
	A set of relations $\mathcal{R}$ on a quiver defines a two-sided ideal of the twisted path algebra $\mathcal{T}_\mathscr{M}\mathcal{A}_Q$ which is generated by 
	$$
	\mathcal{R}(U):=\{\sum c_p\cdot m_p|\sum c_p\cdot p\in \mathcal{R},m_p\in \mathscr{M}_p(U) \}
	$$
	for all open $U\subseteq X$. In the rest of this paper we will be dealing with two-sided ideals of the twisted path algebra generated by a subset of $\mathcal{R}(U)$ for all open subset $U\subseteq X$ (see \cite[Lemma 17.4.4]{stacksProject}) and we will refer to these as \emph{ideals generated by the set of relations $\mathcal{R}$}. The prototypical example of such an ideal, which the reader should have in mind, is that of Proposition \ref{isomorphismTensorProductTwistedPathAlgebras}. Also, notice that this construction certainly applies to the presheaf of $\mathcal{O}_X$-algebras $\widetilde{\mathcal{T}_\mathscr{M}\mathcal{A}_{Q}}$.
	\\
	
	With the notion of relation just introduced in mind, before moving forward, we show a more general result in the spirit of Proposition \ref{isomorphismTensorProductTwistedPathAlgebras}. Under the hypotheses and notation of this proposition we have:
	\begin{theorem}\label{isomorphismTensorProductTwistedPathAlgebrasWithRelations}
		Let $\widetilde{\mathcal{J}}'$ and $\widetilde{\mathcal{J}}''$ be two-sided ideals, of $\widetilde{\mathcal{T}_{\mathscr{M}'}\mathcal{A}}_{Q'}$ and $\widetilde{\mathcal{T}_{\mathscr{M}''}\mathcal{A}}_{Q''}$ respectively, generated by sets of relations $\mathcal{R}'$ and $\mathcal{R}''$.  Let $\widetilde{\mathcal{I}}'$ and $\widetilde{\mathcal{I}}''$ be the two-sided ideals of the twisted path algebra $\widetilde{\mathcal{T}_\mathscr{M}\mathcal{A}_{Q'\otimes Q''}}$ generated by 
		$
		\sum_{j\in Q_0''}m_{(r',j)} \text{ and } \sum_{i\in Q_0'}m_{(i,r'')}
		$
		respectively. Here, for $m_{r'}=\sum m_p$ a generator of the ideal $\widetilde{\mathcal{J}}'$, $m_{(r',j)}=m_{r'}$ but we think now of each summand $m_p$ as an element in $\mathscr{M}_{(p,j)}$ rather than $\mathscr{M}_p$. Likewise for the terms $m_{(i,r'')}$. Then, there is an isomorphism 
		$$
		\mathcal{T}_\mathscr{M}\mathcal{A}_{Q'\otimes Q''}/\mathcal{I}+\mathcal{I}'+\mathcal{I}''\cong \mathcal{T}_{\mathscr{M}'}\mathcal{A}_{Q'}/\mathcal{J}'\otimes_{\mathcal{O}_X}\mathcal{T}_{\mathscr{M}''}\mathcal{A}_{Q''}/\mathcal{J}''
		$$
		where $\mathcal{I}'(\mathcal{I}'')$ is the associated two-sided ideal sheaf of $\mathcal{T}_\mathscr{M}\mathcal{A}_{Q'\otimes Q''}$ obtained from the sheafification of $\widetilde{\mathcal{I}}'(\widetilde{\mathcal{I}}'')$. 
	\end{theorem}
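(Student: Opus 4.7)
The plan is to bootstrap from Proposition \ref{isomorphismTensorProductTwistedPathAlgebras} and reduce the statement to the standard algebraic identity $(A/J')\otimes(B/J'')\cong (A\otimes B)/(J'\otimes B + A\otimes J'')$, working first at the level of presheaves of $\mathcal{O}_X$-algebras and then sheafifying at the end. More precisely, I would begin by observing that the presheaf isomorphism $\widetilde{\varphi}:\widetilde{\mathcal{T}_\mathscr{M}\mathcal{A}_{Q'\otimes Q''}}/\widetilde{\mathcal{I}}\xrightarrow{\sim}\widetilde{\mathcal{T}_{\mathscr{M}'}\mathcal{A}_{Q'}}\otimes_{\mathcal{O}_X}\widetilde{\mathcal{T}_{\mathscr{M}''}\mathcal{A}_{Q''}}$ constructed in the proof of the previous proposition behaves well on the new generators: for a path $p=\alpha_1\cdots\alpha_n$ in $Q'$ and a vertex $j\in Q_0''$, the fact that $\widetilde{\varphi}$ is an algebra map on the generators $m_{(\alpha_k,j)}\mapsto m_{\alpha_k}\otimes e_j$ immediately gives $\widetilde{\varphi}(m_{(p,j)})=m_p\otimes e_j$, and symmetrically for paths in $Q''$.

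Applying this to a relation $r'=\sum m_{p_i}\in\mathcal{R}'$, summing over $j\in Q_0''$ and using $\sum_{j\in Q_0''}e_j=1_{\widetilde{\mathcal{T}_{\mathscr{M}''}\mathcal{A}_{Q''}}}$ (see Equation \ref{UnitPathAlgebra}), I obtain
\[
\widetilde{\varphi}\!\left(\sum_{j\in Q_0''}m_{(r',j)}\right)=r'\otimes 1,\qquad
\widetilde{\varphi}\!\left(\sum_{i\in Q_0'}m_{(i,r'')}\right)=1\otimes r''.
\]
Since $\widetilde{\mathcal{I}}'$ and $\widetilde{\mathcal{I}}''$ are the two-sided ideals generated by the left-hand sides, and since $\widetilde{\varphi}$ is an isomorphism of $\mathcal{O}_X$-algebras, the image $\widetilde{\varphi}(\widetilde{\mathcal{I}}'+\widetilde{\mathcal{I}}'')$ is precisely the two-sided ideal generated by $\mathcal{R}'\otimes 1$ and $1\otimes \mathcal{R}''$, that is $\widetilde{\mathcal{J}}'\otimes \widetilde{\mathcal{T}_{\mathscr{M}''}\mathcal{A}_{Q''}}+\widetilde{\mathcal{T}_{\mathscr{M}'}\mathcal{A}_{Q'}}\otimes\widetilde{\mathcal{J}}''$. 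Passing to the quotient, $\widetilde{\varphi}$ induces an isomorphism of presheaves
\[
\widetilde{\mathcal{T}_\mathscr{M}\mathcal{A}_{Q'\otimes Q''}}/(\widetilde{\mathcal{I}}+\widetilde{\mathcal{I}}'+\widetilde{\mathcal{I}}'')\xrightarrow{\sim}(\widetilde{\mathcal{T}_{\mathscr{M}'}\mathcal{A}_{Q'}}\otimes_{\mathcal{O}_X}\widetilde{\mathcal{T}_{\mathscr{M}''}\mathcal{A}_{Q''}})\Big/(\widetilde{\mathcal{J}}'\otimes 1+1\otimes\widetilde{\mathcal{J}}'').
\]

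Next I would invoke the purely algebraic identity $(A\otimes B)/(J'\otimes B+A\otimes J'')\cong (A/J')\otimes(B/J'')$, applied sectionwise on each open $U\subseteq X$, to identify the right-hand side with $(\widetilde{\mathcal{T}_{\mathscr{M}'}\mathcal{A}_{Q'}}/\widetilde{\mathcal{J}}')\otimes_{\mathcal{O}_X}(\widetilde{\mathcal{T}_{\mathscr{M}''}\mathcal{A}_{Q''}}/\widetilde{\mathcal{J}}'')$. Finally, I would sheafify, exactly as in the closing paragraph of the proof of Proposition \ref{isomorphismTensorProductTwistedPathAlgebras}: sheafification is exact, so it commutes with quotients, and by \cite[Lemma 17.16.2]{stacksProject} it commutes with tensor products, so the sheafified versions of both sides are canonically the two algebras in the statement.

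The only non-routine point is the bookkeeping in the second step, namely verifying that the two-sided ideal generated by the ``tensorial lifts'' $\sum_j m_{(r',j)}$ is the full preimage of $\widetilde{\mathcal{J}}'\otimes\widetilde{\mathcal{T}_{\mathscr{M}''}\mathcal{A}_{Q''}}$ rather than merely contained in it; this is where one has to be careful and use that $\widetilde{\varphi}$ is already an isomorphism modulo $\widetilde{\mathcal{I}}$, together with the fact that multiplying such a generator on the left or right by elements of the tensor path algebra and taking sums yields, via $\widetilde{\varphi}$, an arbitrary element of $\widetilde{\mathcal{J}}'\otimes\widetilde{\mathcal{T}_{\mathscr{M}''}\mathcal{A}_{Q''}}$. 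Everything else is a direct consequence of Proposition \ref{isomorphismTensorProductTwistedPathAlgebras} and standard exactness properties of sheafification.
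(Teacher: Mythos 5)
Your proof is correct, but it takes a genuinely different route from the paper's. The paper essentially re-runs the argument of Proposition \ref{isomorphismTensorProductTwistedPathAlgebras} on the larger quotient: it checks that $\widetilde{\mathcal{I}}+\widetilde{\mathcal{I}}'+\widetilde{\mathcal{I}}''\subseteq\ker(\mathrm{pr}\circ\varphi)$, descends $\psi$ to a map $\widetilde{\psi}$ out of $\widetilde{\mathcal{T}_{\mathscr{M}'}\mathcal{A}_{Q'}}/\widetilde{\mathcal{J}}'\otimes_{\mathcal{O}_X}\widetilde{\mathcal{T}_{\mathscr{M}''}\mathcal{A}_{Q''}}/\widetilde{\mathcal{J}}''$, and re-verifies on generators and via the spanning set $S(U)$ that the two induced maps are mutually inverse. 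You instead treat the isomorphism $\widetilde{\varphi}$ of Proposition \ref{isomorphismTensorProductTwistedPathAlgebras} as a black box, transport the ideals $\widetilde{\mathcal{I}}',\widetilde{\mathcal{I}}''$ across it (the computation $\widetilde{\varphi}(\sum_j m_{(r',j)})=r'\otimes 1$ using Equation \ref{UnitPathAlgebra} is exactly right, and the identification of the two-sided ideal generated by $r'\otimes 1$ with $\widetilde{\mathcal{J}}'\otimes\widetilde{\mathcal{T}_{\mathscr{M}''}\mathcal{A}_{Q''}}$ goes through because $(a_1\otimes b)(r'\otimes 1)(a_2\otimes 1)=a_1r'a_2\otimes b$), and then invoke the standard identity $(A\otimes B)/(J'\otimes B+A\otimes J'')\cong(A/J')\otimes(B/J'')$, which follows from right-exactness of the tensor product applied sectionwise. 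Your version is more modular and avoids repeating the spanning argument; the paper's is more explicit and self-contained. The one detail you elide is the final sheafification step: matching the sheaf-level statement requires knowing that the sheafification of the presheaf $\widetilde{\mathcal{I}}+\widetilde{\mathcal{I}}'+\widetilde{\mathcal{I}}''$ is the sum $\mathcal{I}+\mathcal{I}'+\mathcal{I}''$ of the sheafified ideals, which is not covered by the closing paragraph of Proposition \ref{isomorphismTensorProductTwistedPathAlgebras} that you cite; the paper isolates this as a separate lemma (proved by a stalkwise argument), and your proof needs it too.
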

	\begin{proof}
		It is easy to see that $\widetilde{\mathcal{I}}+\widetilde{\mathcal{I}}'+\widetilde{\mathcal{I}}''\subseteq\ker(\text{pr}\circ\varphi)$ for $\text{pr}:\widetilde{\mathcal{T}_{\mathscr{M}'}\mathcal{A}_{Q'}}\otimes_{\mathcal{O}_X}\widetilde{\mathcal{T}_{\mathscr{M}''}\mathcal{A}_{Q''}}\to \widetilde{\mathcal{T}_{\mathscr{M}'}\mathcal{A}_{Q'}}/\widetilde{\mathcal{J}}'\otimes_{\mathcal{O}_X}\widetilde{\mathcal{T}_{\mathscr{M}''}\mathcal{A}_{Q''}}/\widetilde{\mathcal{J}}''$ the canonical projection. Thus we have a map 
		$$
		\widetilde{\text{pr}\circ\varphi}: 	\widetilde{\mathcal{T}_\mathscr{M}\mathcal{A}_{Q'\otimes Q''}}/\widetilde{\mathcal{I}}+\widetilde{\mathcal{I}}'+\widetilde{\mathcal{I}}''\longrightarrow \widetilde{\mathcal{T}_{\mathscr{M}'}\mathcal{A}_{Q'}}/\widetilde{\mathcal{J}}'\otimes_{\mathcal{O}_X}\widetilde{\mathcal{T}_{\mathscr{\mathscr{M}}''}\mathcal{A}_{Q''}}/\widetilde{\mathcal{J}}''.
		$$
		To show that this is an isomorphism we follow the same steps as in Proposition \ref{isomorphismTensorProductTwistedPathAlgebras}. One sees that $\psi$ factors through $\widetilde{\mathcal{T}_{\mathscr{M}'}\mathcal{A}_{Q'}}/\widetilde{\mathcal{J}}'\otimes_{\mathcal{O}_X}\widetilde{\mathcal{T}_{\mathscr{M}''}\mathcal{A}_{Q''}}/\widetilde{\mathcal{J}}''$  so we have a map
		$$\widetilde{\psi}:\widetilde{\mathcal{T}_{\mathscr{M}'}\mathcal{A}_{Q'}}/\widetilde{\mathcal{J}}'\otimes_{\mathcal{O}_X}\widetilde{\mathcal{T}_{\mathscr{M}''}\mathcal{A}_{Q''}}/\widetilde{\mathcal{J}}''\to\widetilde{\mathcal{T}_{\mathscr{M}}\mathcal{A}_{Q'\otimes Q''}}/\widetilde{\mathcal{I}}+\widetilde{\mathcal{I}}'+\widetilde{\mathcal{I}}''$$ which is an inverse for $\widetilde{\text{pr}\circ\varphi}$. This is checked, as in Proposition \ref{isomorphismTensorProductTwistedPathAlgebras}, at the level of generators and by noting that for all $U\subseteq X$, the set $S(U)\text{ mod }(\widetilde{\mathcal{I}}+\widetilde{\mathcal{I}}'+\widetilde{\mathcal{I}}'')(U)$ spans $\widetilde{\mathcal{T}_\mathscr{M}\mathcal{A}_{Q'\otimes Q''}}(U)/(\widetilde{\mathcal{I}}+\widetilde{\mathcal{I}}'+\widetilde{\mathcal{I}}'')(U)$. Thus we have an isomorphism of presheaves of $\mathcal{O}_X$-algebras 
		$$
		\widetilde{\mathcal{T}_{\mathscr{M}}\mathcal{A}_{Q'\otimes Q''}}/\widetilde{\mathcal{I}}+\widetilde{\mathcal{I}}'+\widetilde{\mathcal{I}}''\cong \widetilde{\mathcal{T}_{\mathscr{M}'}\mathcal{A}}_{Q'}/\widetilde{\mathcal{J}}'\otimes_{\mathcal{O}_X}\widetilde{\mathcal{T}_{\mathscr{M}''}\mathcal{A}}_{Q''}/\widetilde{\mathcal{J}}''.
		$$
		Our theorem then follows by the same arguments to those in Proposition \ref{isomorphismTensorProductTwistedPathAlgebras} regarding the commutativity of the sheafification functor with quotients and tensor products. The only thing that needs to be checked though is that the sheafification of  the presheaf $\widetilde{\mathcal{I}}+\widetilde{\mathcal{I}}'+\widetilde{\mathcal{I}}''$ is indeed $\mathcal{I}+\mathcal{I}'+\mathcal{I}''$. This follows from both the fact that $\widetilde{\mathcal{I}}+\widetilde{\mathcal{I}}'+\widetilde{\mathcal{I}}''$ is the image of $\widetilde{\mathcal{I}}\times\widetilde{\mathcal{I}}'\times\widetilde{\mathcal{I}}''$ under the sum morphism and the fact that the sheafification functor is exact.
	\end{proof}

	\begin{definition}\label{defTwistedRepresentationWithRelations}
		Let $\mathcal{R}$ be a set of relations of the quiver $Q$ and $\mathcal{I}_\mathcal{R}$ an ideal of $\mathcal{T}_\mathscr{M}\mathcal{A}_Q$ generated by $\mathcal{R}$, then a $\mathscr{M}$-twisted representation of $Q$, $\mathcal{E}=((\mathcal{E}_i)_{i\in Q_0},(\varphi_\alpha)_{\alpha\in Q_1})$, is said to \emph{satisfy the relations in} $\mathcal{\mathcal{I}_\mathcal{R}}$ if for all $r=\sum m_p$ generator of $ \mathcal{I}_\mathcal{R}(U)$ we have that 
		$$
		\sum \psi_{m_p}:\mathcal{E}_{tp}|_U\to \mathcal{E}_{hp}|_U\equiv 0.
		$$
		Here, $\psi_{m_p}:\mathcal{E}_{tp}|_U\to \mathcal{E}_{hp}|_U$ is the morphism given by the image of $m_p\in\mathscr{M}_p(U)$ under the map $\mathcal{T}_\mathscr{M}\mathcal{A}_Q\to \mathcal{E}nd_{\mathcal{O}_X}(G(\mathcal{E}))$ in Proposition \ref{equivCategoriesTwistedModulesReps}. We denote $\text{Rep}(\mathscr{M}Q,\mathcal{I}_\mathcal{R})$ the full subcategory of $\text{Rep}(\mathscr{M}Q)$ whose objects are the representations of $Q$ satisfying the relations in $\mathcal{I}_\mathcal{R}$.
	\end{definition}
	
	\begin{proposition}\label{fullEmbeddingPathAlgebras}
		Let $\mathcal{I}$ be a two-sided ideal of the path algebra $\mathcal{T}_\mathscr{M}\mathcal{A}_Q$. Then, there is a full embedding of categories {\normalfont $\mathcal{T}_\mathscr{M}\mathcal{A}_Q/\mathcal{I}\text{-mod}\hookrightarrow\mathcal{T}_\mathscr{M}\mathcal{A}_Q\text{-mod}$.}
	\end{proposition}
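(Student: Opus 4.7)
The plan is to define the functor via restriction of scalars along the canonical projection of $\mathcal{O}_X$-algebras $\pi\colon \mathcal{T}_\mathscr{M}\mathcal{A}_Q\to \mathcal{T}_\mathscr{M}\mathcal{A}_Q/\mathcal{I}$. Concretely, given a $\mathcal{T}_\mathscr{M}\mathcal{A}_Q/\mathcal{I}$-module $(\mathcal{M},\bar{\mu})$ I would send it to the same underlying $\mathcal{O}_X$-module $\mathcal{M}$ equipped with the structure morphism $\mu=\bar{\mu}\circ(\pi\otimes \mathrm{id}_{\mathcal{M}})$. The module axioms for $\mu$ follow at once from those for $\bar{\mu}$ together with the fact that $\pi$ is a morphism of sheaves of $\mathcal{O}_X$-algebras. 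On morphisms the functor is the identity on underlying $\mathcal{O}_X$-linear maps, and functoriality is immediate from the commutativity of Diagram \ref{modMorphismDiagram}.

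Next I would check faithfulness, which is trivial since the functor is the identity on the underlying $\mathcal{O}_X$-linear maps. For fullness, given a $\mathcal{T}_\mathscr{M}\mathcal{A}_Q$-linear map $f\colon \mathcal{M}\to \mathcal{N}$ between two modules in the image of the functor, I need to verify that $f$ is compatible with the $\mathcal{T}_\mathscr{M}\mathcal{A}_Q/\mathcal{I}$-action, i.e.\ that the analogue of Diagram \ref{modMorphismDiagram} with $\mathcal{T}_\mathscr{M}\mathcal{A}_Q$ replaced by $\mathcal{T}_\mathscr{M}\mathcal{A}_Q/\mathcal{I}$ commutes. This can be checked at the level of stalks: since the projection $\pi$ is surjective on stalks, every germ of $\mathcal{T}_\mathscr{M}\mathcal{A}_Q/\mathcal{I}$ lifts to a germ of $\mathcal{T}_\mathscr{M}\mathcal{A}_Q$, and the required identity on the quotient then follows from the $\mathcal{T}_\mathscr{M}\mathcal{A}_Q$-linearity of $f$, together with the definition of the two structure morphisms $\mu_\mathcal{M}$ and $\mu_\mathcal{N}$ as factorizations of $\bar{\mu}_\mathcal{M}$ and $\bar{\mu}_\mathcal{N}$ through $\pi$.

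Finally, to show that the functor is injective on objects, I would observe that the structure morphism $\bar{\mu}$ on any object in the image is determined by $\mu$: indeed, since $\pi\otimes \mathrm{id}_\mathcal{M}$ is an epimorphism of $\mathcal{O}_X$-modules (sheafification being exact, so that surjectivity on stalks persists after tensoring), the factorization $\bar{\mu}$ of $\mu$ through it is unique whenever it exists. Hence distinct $\mathcal{T}_\mathscr{M}\mathcal{A}_Q/\mathcal{I}$-module structures on the same $\mathcal{O}_X$-module give rise to distinct $\mathcal{T}_\mathscr{M}\mathcal{A}_Q$-module structures.

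The only mildly delicate step is the fullness argument, where one has to be careful that the sheaf-theoretic epimorphism property of $\pi$ suffices even though $\pi$ may fail to be surjective on sections over a general open set $U\subseteq X$; but working stalkwise (and invoking that morphisms of sheaves are determined by their stalks) bypasses this cleanly, and the rest of the proof is essentially a routine transcription of the classical ring-theoretic argument into the sheaf-of-algebras setting.
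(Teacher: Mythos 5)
Your proposal is correct and follows essentially the same route as the paper: both define the functor as restriction of scalars along the canonical projection $\pi\colon \mathcal{T}_\mathscr{M}\mathcal{A}_Q\to \mathcal{T}_\mathscr{M}\mathcal{A}_Q/\mathcal{I}$ and then verify it is a full embedding. The only difference is that you spell out the faithfulness, fullness and injectivity-on-objects checks (correctly handling the sheaf-theoretic subtlety via stalks and the epimorphism property of $\pi\otimes\mathrm{id}$), whereas the paper declares these verifications straightforward from the construction.
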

	\begin{proof}
		Let $\pi:\mathcal{T}_\mathscr{M}\mathcal{A}_Q\to \mathcal{T}_\mathscr{M}\mathcal{A}_Q/\mathcal{I}$ be the canonical projection. We claim that the restriction of scalars functor, $\pi^*:\mathcal{T}_\mathscr{M}\mathcal{A}_Q/\mathcal{I}\text{-mod}\to\mathcal{T}_\mathscr{M}\mathcal{A}_Q\text{-mod}$, induced by $\pi$ is the full embedding we are looking for. Indeed, for every $\mathcal{T}_\mathscr{M}\mathcal{A}_Q/\mathcal{I}$-module $\mathcal{M}$, $\pi^*\mathcal{M}$ is the $\mathcal{T}_\mathscr{M}\mathcal{A}_Q$-module whose underlying sheaf is $\mathcal{M}$ and structure morphism $\mu_{\mathcal{T}_\mathscr{M}\mathcal{A}_Q}:\mathcal{T}_\mathscr{M}\mathcal{A}_Q\otimes_{\mathcal{O}_{X}}\mathcal{M}\to \mathcal{M}$ is the sheaf morphism associated to the following composition of presheaf morphisms
		$$
		\begin{matrix}
			\mathcal{T}_\mathscr{M}\mathcal{A}_Q(U)\otimes_{\mathcal{O}_{X}(U)}\mathcal{M}(U) & \longrightarrow & (\mathcal{T}_{\mathscr{M}}\mathcal{A}_Q/\mathcal{I})(U)\otimes_{\mathcal{O}_{X}(U)}\mathcal{M}(U) & \longrightarrow & \mathcal{M}(U)\\
			\sigma\otimes m & \longmapsto & \pi(\sigma)\otimes m & \longmapsto&  \widetilde{\mu}_{\mathcal{T}_\mathscr{M}\mathcal{A}_Q/\mathcal{I}}(\pi(\sigma)\otimes m)
		\end{matrix}.
		$$
		Here, $\widetilde{\mu}_{\mathcal{T}_\mathscr{M}\mathcal{A}_Q/\mathcal{I}}$ is the presheaf morphism associated to the morphism $\mu_{\mathcal{T}_\mathscr{M}\mathcal{A}_Q/\mathcal{I}}$ giving $\mathcal{M}$ structure of $\mathcal{T}_\mathscr{M}\mathcal{A}_Q/\mathcal{I}$-module. At the level of morphisms, if $f:\mathcal{M}\to \mathcal{N}\in \text{Hom}_{\mathcal{T}_\mathscr{M}\mathcal{A}_Q/\mathcal{I}}(\mathcal{M},\mathcal{N})$, then $\pi^*f:\pi^*\mathcal{M}\to \pi^*\mathcal{N}\in \text{Hom}_{\mathcal{T}_\mathscr{M}\mathcal{A}_Q}(\pi^*\mathcal{M},\pi^*\mathcal{N})$ is the morphism of $\mathcal{T}_\mathscr{M}\mathcal{A}_Q$-modules defined by the rules
		$$
		\pi^*f(m_1+m_2)=f(m_1+m_2)=f(m_1)+f(m_2)=\pi^*f(m_1)+\pi^*f(m_2)
		$$
		and 
		$$
		\pi^*f\circ\widetilde{\mu}_{\mathcal{T}_\mathscr{M}\mathcal{A}_Q}(\sigma\otimes m_1)=f\circ \widetilde{\mu}_{\mathcal{T}_\mathscr{M}\mathcal{A}_Q/\mathcal{I}}(\pi(\sigma)\otimes m_1)
		$$
		for all $\sigma\in\mathcal{T}_\mathscr{M}\mathcal{A}_Q(U)$, $m_1,m_2\in \mathcal{M}(U)$ and $U\subseteq X$ open set. 
		\\
		\\
		It is straightforward to see, from how we have defined the functor $\pi^*$, that the claimed statements hold.
	\end{proof}
	
	\begin{remark}\label{remarkFullEmbeddingQuotients}
		Note that the previous proposition still holds true if we replace the twisted path algebra $\mathcal{T}_\mathscr{M}\mathcal{A}_Q$ by any other sheaf of $\mathcal{O}_X$-algebras. 
	\end{remark}
	The following result generalizes Proposition \ref{equivCategoriesTwistedModulesReps} to the case of representations with relations. It is also a generalization of Bruzzo, Bartocci and Rava's result for which the twisting is trivial \cite[Theorem 4.2]{BartocciHomologyTwistedQuiverBundles}.
	\begin{theorem}\label{equivalenceOfCatTwistedRepresentationsModulesWithRelations}
		Let $\mathcal{R}$ be a set of relations and $\mathcal{I}_\mathcal{R}$ be the two-sided ideal of the twisted path algebra $\mathcal{T}_\mathscr{M}\mathcal{A}_Q$ generated by this set.  There is an equivalence of categories between {\normalfont$\text{Rep}(\mathscr{M}Q,\mathcal{I}_\mathcal{R})$} and {\normalfont $\mathcal{T}_\mathscr{M}\mathcal{A}_Q/\mathcal{I}_\mathcal{R}$-mod} compatible with that in Proposition \ref{equivCategoriesTwistedModulesReps} in the sense that the diagram of functors {\normalfont
			\begin{center}
				\begin{tikzcd}[column sep =large, row sep =large]
					\text{Rep}(\mathscr{M}Q,\mathcal{I}_\mathcal{R})\arrow[r,""] \arrow[d,',hookrightarrow,""] & \mathcal{T}_\mathscr{M}\mathcal{A}_Q/\mathcal{I}_\mathcal{R}\text{-mod}\arrow[d,hookrightarrow,"\pi^*"] \\
					\text{Rep}(\mathscr{M}Q) \arrow[r, phantom, "\simeq", centered] \arrow[r,',bend left=10,"G"'] & \mathcal{T}_\mathscr{M}\mathcal{A}_Q\text{-mod} \arrow[l,',bend left=10,"F"']
				\end{tikzcd}
		\end{center}}
		\noindent commutes (up to isomorphism). Here, $\pi^*$ is the restriction of scalars functor in Proposition \ref{fullEmbeddingPathAlgebras} and $F,G$ are the functors defined in Proposition \ref{equivCategoriesTwistedModulesReps}.
	\end{theorem}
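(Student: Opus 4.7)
The plan is to combine the equivalence $F \dashv G$ from Proposition \ref{equivCategoriesTwistedModulesReps} with the full embedding $\pi^{*}:\mathcal{T}_\mathscr{M}\mathcal{A}_Q/\mathcal{I}_\mathcal{R}\text{-mod}\hookrightarrow \mathcal{T}_\mathscr{M}\mathcal{A}_Q\text{-mod}$ from Proposition \ref{fullEmbeddingPathAlgebras}, and show that under the equivalence the essential image of $\pi^{*}$ corresponds exactly to $\text{Rep}(\mathscr{M}Q,\mathcal{R})$. The desired equivalence will then be obtained by restriction, and commutativity of the diagram will be automatic from the definition of $\pi^{*}$ as the restriction-of-scalars functor.

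First I would characterize the essential image of $\pi^{*}$: a $\mathcal{T}_\mathscr{M}\mathcal{A}_Q$-module $(\mathcal{M},\mu)$ lies in the essential image of $\pi^{*}$ if and only if the composition $\mathcal{I}_\mathcal{R}\otimes_{\mathcal{O}_X}\mathcal{M}\hookrightarrow \mathcal{T}_\mathscr{M}\mathcal{A}_Q\otimes_{\mathcal{O}_X}\mathcal{M}\xrightarrow{\mu}\mathcal{M}$ is zero, since in that case $\mu$ descends along the projection $\pi$ to a well-defined $\mathcal{T}_\mathscr{M}\mathcal{A}_Q/\mathcal{I}_\mathcal{R}$-action, and conversely any $\pi^{*}\mathcal{N}$ clearly satisfies this. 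Because $\mathcal{I}_\mathcal{R}$ is the sheafification of the two-sided ideal presheaf generated by $\mathcal{R}$, this vanishing condition is equivalent to its presheaf analogue (vanishing on stalks, and hence on generators of the presheaf ideal by naturality).

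Next I would translate this condition across $G$. For a twisted representation $\mathcal{E}=((\mathcal{E}_i)_{i\in Q_0},(\varphi_\alpha)_{\alpha\in Q_1})$, the construction in Proposition \ref{equivCategoriesTwistedModulesReps} associates to $\mu$ the algebra map $\mathcal{T}_\mathscr{M}\mathcal{A}_Q\to \mathcal{E}nd_{\mathcal{O}_X}(G(\mathcal{E}))$ which sends a generator $m_\alpha\in \mathscr{M}_\alpha(U)$ to $\psi_{m_\alpha}$; by multiplicativity, a tensor $m_p=m_{\alpha_1}\otimes\cdots\otimes m_{\alpha_k}$ along a path $p=\alpha_1\cdots\alpha_k$ maps to the composite $\psi_{m_p}=\psi_{m_{\alpha_1}}\circ\cdots\circ\psi_{m_{\alpha_k}}$. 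Hence $\mu$ annihilates $\mathcal{I}_\mathcal{R}$ if and only if for every relation $r=\sum m_p\in \mathcal{R}$ one has $\sum\psi_{m_p}=0$, which is precisely the condition in Definition \ref{defTwistedRepresentationWithRelations} for $\mathcal{E}\in\text{Rep}(\mathscr{M}Q,\mathcal{R})$. Since $F$ is the inverse of $G$, the same characterization transports backwards: $F(\mathcal{M})$ satisfies $\mathcal{R}$ iff $(\mathcal{M},\mu)$ lies in the essential image of $\pi^{*}$.

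Putting these together, $G$ and $F$ restrict to mutually quasi-inverse functors between $\text{Rep}(\mathscr{M}Q,\mathcal{R})$ and $\mathcal{T}_\mathscr{M}\mathcal{A}_Q/\mathcal{I}_\mathcal{R}\text{-mod}$; on morphisms there is nothing to check because $\text{Rep}(\mathscr{M}Q,\mathcal{R})$ and $\mathcal{T}_\mathscr{M}\mathcal{A}_Q/\mathcal{I}_\mathcal{R}\text{-mod}$ are full subcategories of $\text{Rep}(\mathscr{M}Q)$ and $\mathcal{T}_\mathscr{M}\mathcal{A}_Q\text{-mod}$ respectively, and the Hom-sets are preserved by the already-established equivalence. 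Compatibility with the square is immediate because $\pi^{*}$ only remembers the underlying $\mathcal{T}_\mathscr{M}\mathcal{A}_Q$-action, which by construction coincides with $G$ applied to the corresponding representation. I expect the main obstacle to be the sheaf-theoretic step of verifying that $\mu$ annihilates the sheafified ideal $\mathcal{I}_\mathcal{R}$ iff the presheaf-level relations hold, which requires the same kind of stalk/sheafification argument used in the proof of Theorem \ref{isomorphismTensorProductTwistedPathAlgebrasWithRelations} and the lemma following it; once this is in place, the rest is a routine unwinding of definitions.
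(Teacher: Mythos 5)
Your proposal is correct and follows essentially the same route as the paper's proof: the paper likewise observes that a representation satisfies the relations precisely when the induced map $\mathcal{T}_\mathscr{M}\mathcal{A}_Q\to\mathcal{E}nd_{\mathcal{O}_X}(G(\mathcal{E}))$ kills $\mathcal{I}_\mathcal{R}$ (so the action descends to the quotient algebra), and takes $F\circ\pi^*$ as the quasi-inverse, with the diagram commuting by construction. Your packaging via the essential image of $\pi^*$ is only a cosmetic reformulation, and your flagging of the presheaf-versus-sheafified-ideal point is a reasonable (indeed slightly more careful) account of a step the paper passes over quickly.
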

	\begin{proof}
		Let $((\mathcal{E}_i)_{i\in Q_0},(\varphi_{\alpha})_{\alpha\in Q_1})$ be a representation of the quiver $Q$ that satisfies the relations in $\mathcal{I}_\mathcal{R}$. We now endow $\mathcal{E}=\bigoplus_{i\in Q_0}\mathcal{E}_i$ with structure of $\mathcal{T}_\mathscr{M}\mathcal{A}_Q/\mathcal{I}_\mathcal{R}$-module. As we saw in Proposition \ref{equivCategoriesTwistedModulesReps}, the functor $G$ endows $\mathcal{E}$ with structure of $\mathcal{T}_\mathscr{M}\mathcal{A}_Q$-module via a map $\mathcal{T}_\mathscr{M}\mathcal{A}_Q\to\mathcal{E}nd_{\mathcal{O}_X}(\mathcal{E})$. Clearly every section of the ideal $\mathcal{I}_\mathcal{R}$ is in the kernel of this morphism so we get a map $\mathcal{T}_\mathscr{M}\mathcal{A}_Q/\mathcal{I}_\mathcal{R}\to \mathcal{E}nd_{\mathcal{O}_X}(\mathcal{E})$ hence endowing $\mathcal{E}$ with structure of $\mathcal{T}_\mathscr{M}\mathcal{A}_Q/\mathcal{I}_\mathcal{R}$-module. Now, let $(f_i:\mathcal{E}_i\to \mathcal{E}'_i)_{i\in Q_0}$ be a morphism of $\mathscr{M}$-twisted representations. Arguing similarly as we did in Proposition \ref{equivCategoriesTwistedModulesReps} we can show that the map $\oplus_{i\in Q_0}f_i:\mathcal{E}\to \mathcal{E}'$ is $\mathcal{T}_\mathscr{M}\mathcal{A}_Q/\mathcal{I}_\mathcal{R}$-linear.  
		\\
		\\
		Let us now give a quasi-inverse for the functor defined above. Let $\mathcal{\mathcal{E}}$ be a $\mathcal{T}_\mathscr{\mathscr{M}}\mathcal{A}_Q/\mathcal{I}_{\mathcal{R}}$-module. Then, $F\circ \pi^*(\mathcal{E})=((\mathcal{E}_i)_{i\in Q_0},(\varphi_\alpha)_{\alpha\in Q_1})$ is a $\mathscr{M}$-twisted representation of $Q$. Let $\sum m_p\in \mathcal{I}_\mathcal{R}(U)$ be a generator, then 
		$$
		\bigg( \sum \psi_{m_p}\bigg)(s_{tp})= \widetilde{\mu}\bigg(\sum \pi(m_p)\otimes s_{tp}\bigg)=0
		$$
		for $s_{tp}\in \mathcal{E}_{tp}(V)$, $V\subseteq U\subseteq X$ an open subset and $\widetilde{\mu}$ the presheaf morphism corresponding to the structure morphism $\mu:\mathcal{T}_\mathscr{M}\mathcal{A}_Q/\mathcal{I}_\mathcal{R}\otimes_{\mathcal{O}_{X}}\mathcal{E}\to \mathcal{E}$. We have just shown that $F\circ \pi^*(\mathcal{\mathcal{E}})$ is in fact a representation in $\text{Rep}(\mathscr{M}Q,\mathcal{I}_\mathcal{R})$. The functor $F\circ \pi^*$ is the desired quasi-inverse.
	\end{proof}
	\subsection{Tensor products of twisted quiver representations}\label{tensorProductsTwistedRepresentations}
	Let us put all together to present our notion of tensor product of twisted quiver representations. Let $\mathcal{E}^{'} (\mathcal{E}'')$ be a $\mathscr{M}'(\mathscr{M}'')$-twisted representation of a quiver $Q'(Q'')$. Because of the equivalence of categories in Proposition \ref{equivalenceOfCatTwistedRepresentationsModulesWithRelations}, these representations correspond to modules, say $\mathcal{M}_{\mathcal{E}'}$ and $\mathcal{M}_{\mathcal{E}''}$, over the corresponding twisted path algebras. The tensor product $\mathcal{M}_{\mathcal{E}'}\otimes _{\mathcal{O}_X}\mathcal{M}_{\mathcal{E}''}$ can be given structure of $\mathcal{T}_{\mathscr{M}'}\mathcal{A}_{Q'}\otimes_{\mathcal{O}_X}\mathcal{T}_{\mathscr{M}''}\mathcal{A}_{Q''}$-module as shown in Equation (\ref{tensorProductOfModulesOverPathAlgebras}). Now, Proposition \ref{isomorphismTensorProductTwistedPathAlgebras} tells us that we can think of this tensor product as a module over $\mathcal{T}_{\mathscr{M}}\mathcal{A}_{Q'\otimes Q''}/\mathcal{I}$ where $\mathcal{I}$ is the two-sided ideal generated by some relations on the tensor quiver $Q'\otimes Q''$. Finally, Theorem \ref{equivalenceOfCatTwistedRepresentationsModulesWithRelations} shows that the $\mathcal{T}_\mathscr{M}\mathcal{A}_{Q'\otimes Q''}/\mathcal{I}$-module $\mathcal{M}_{\mathcal{E}'}\otimes _{\mathcal{O}_X}\mathcal{M}_{\mathcal{E}''}$ corresponds to a representation of the tensor quiver $Q'\otimes Q''$ with relations, in the sense of Definition \ref{defTwistedRepresentationWithRelations}, which we can explicitly write down from the definition of $\mathcal{I}$. Thus it makes sense to define the \emph{tensor product}, $\mathcal{E}'\otimes \mathcal{E}''$, of the representations $\mathcal{E}'$ and $\mathcal{E}''$ as this representation of the tensor quiver with relations. A similar construction, now using Theorem \ref{isomorphismTensorProductTwistedPathAlgebrasWithRelations}, can be carried out when we are tensoring twisted quiver representations with relations. 
	\subsection{Untwisted representations} \label{theUntwistedCase} When the twisting modules are trivial, that is when $\mathscr{M}_\alpha=\mathcal{O}_X$ for all $\alpha\in Q_1$, the theory simplifies and we have more down to earth descriptions of the objects introduced in the previous sections. In this case, representations of $Q$ are given by tuples $\mathcal{E}=((\mathcal{E}_i)_{i\in Q_0},(\varphi_\alpha:\mathcal{E}_{t\alpha}\to \mathcal{E}_{h\alpha})_{\alpha\in Q_1})$ with $\mathcal{E}_i$ being $\mathcal{O}_X$-modules and $\varphi_\alpha$ morphisms. The path algebra, which we denote by $\mathcal{A}_Q$, is given by 
	$$
	\mathcal{A}_Q=\bigoplus_{p \text{ path}}\mathcal{O}_X\cdot p
	$$
	together with the obvious product. 
	\\
	\\
	The equivalence of categories in Proposition \ref{equivCategoriesTwistedModulesReps} can be easily described. Starting from a quiver representation  $\mathcal{E}=((\mathcal{E}_i)_{i\in Q_0},(\varphi_\alpha)_{\alpha\in Q_1})$, the path algebra $\mathcal{A}_Q$ acts on $G(\mathcal{E})$ by:  $\alpha_1\cdots\alpha_k\cdot m := \iota_{h\alpha_1}\circ\varphi_{\alpha_1}\circ\ldots\circ\varphi_{\alpha_k}\circ\pi_{t\alpha_k}(m)$ with $\alpha_1\cdots\alpha_k$ a path, $m\in G(\mathcal{\mathcal{E}})(U)$, $U\subseteq X$ an open set, and $\iota_i:\mathcal{E}_i\hookrightarrow G(\mathcal{M})$ and $\pi_i:G(\mathcal{M})\to \mathcal{E}_i$ the canonical inclusions and projections respectively. In this way one endows $G(\mathcal{E})$ with structure of $\mathcal{A}_Q$-module. Conversely, given a $\mathcal{A}_Q$-module $\mathcal{M}$, $F(\mathcal{M})=((\mathcal{F}_i)_{i\in Q_0},(\psi_\alpha)_{\alpha\in Q_1})$ is the representation such that $\mathcal{F}_i=\mu(\mathcal{O}_X\cdot e_i\otimes_{\mathcal{O}_{X}}\mathcal{M})$ and $\psi_\alpha$ is the restriction of $\mu$ to $\mathcal{O}_X\cdot \alpha\otimes_{\mathcal{O}_{X}}\mathcal{F}_{t\alpha}$ with $\mu:\mathcal{A}_Q\otimes_{\mathcal{O}_{X}}\mathcal{M}\to \mathcal{M}$ the structure morphism. 
	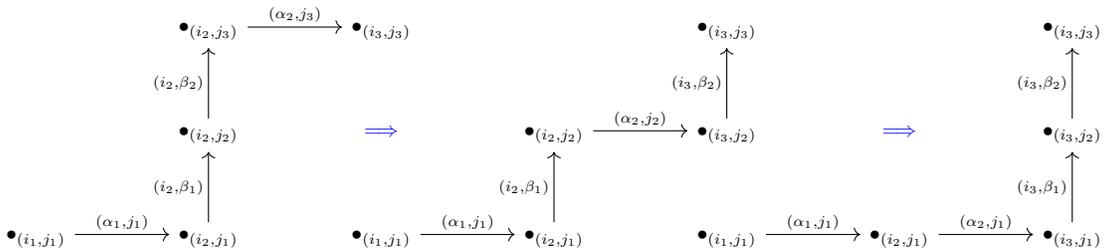
\begin{figure}[H]
		\begin{center}
			\begin{tikzcd}[scale cd=0.8, column sep =large, row sep =large]
				&\bullet_{(i_2,j_3)}\arrow[r,"{(\alpha_2,j_3)}"]&\bullet_{(i_3,j_3)}& & \bullet_{(i_3,j_3)} &&\bullet_{(i_3,j_3)}\\
				& \bullet_{(i_2,j_2)}\arrow[u,"{(i_2,\beta_2)}"]&\textcolor{blue}{\Longrightarrow} &\bullet_{(i_2,j_2)}\arrow[r,"{(\alpha_2,j_2)}"]&\bullet_{(i_3,j_2)}\arrow[u,"{(i_3,\beta_2)}"]&\textcolor{blue}{\Longrightarrow} &\bullet_{(i_3,j_2)}\arrow[u,"{(i_3,\beta_2)}"] \\
				\bullet_{(i_1,j_1)}\arrow[r,"{(\alpha_1,j_1)}"]& \bullet_{(i_2,j_1)}\arrow[u,"{(i_2,\beta_1)}"]&  \bullet_{(i_1,j_1)}\arrow[r,"{(\alpha_1,j_1)}"]&\bullet_{(i_2,j_1)}\arrow[u,"{(i_2,\beta_1)}"] &\bullet_{(i_1,j_1)}\arrow[r,"{(\alpha_1,j_1)}"]&\bullet_{(i_2,j_1)}\arrow[r,"{(\alpha_2,j_1)}"]&\bullet_{(i_3,j_1)} \arrow[u,"{(i_3,\beta_1)}"] 
			\end{tikzcd}
		\end{center}
		\caption{\label{equivalenceOfPaths} Equivalent paths}
	\end{figure}
	The ideal $\mathcal{I}$ in Proposition \ref{isomorphismTensorProductTwistedPathAlgebras} is generated, for the untwisted case, by all the differences
	$$
	(h\alpha,\beta)(\alpha,t\beta)-(\alpha,h\beta)(t\alpha,\beta)
	$$
	with $\alpha\in Q'_1$ and $\beta\in Q_1''$ and the isomorphism $\mathcal{A}_{Q'\otimes Q''}/\mathcal{I}\cong \mathcal{A}_{Q'}\otimes_{\mathcal{O}_X}\mathcal{A}_{Q''}$ can be seen more ``geometrically''. For instance, Figure \ref{equivalenceOfPaths} shows  three equivalent paths modulo $\mathcal{I}$ and we note that the right-most one is a path in the set $S$ defined in the proof of the aforementioned proposition. 
	\\
	\\
	Thus, for $\mathcal{E}=((\mathcal{E}_i)_{i\in Q'_0},(\varphi_\alpha)_{\alpha\in Q'_1}), \mathcal{F}=((\mathcal{F}_j)_{j\in Q''_0},(\psi_\beta)_{\beta\in Q''_1})$, the tensor product $\mathcal{E}\otimes \mathcal{F}$ will be a representation with relations of the quiver $Q'\otimes Q''$ as shown below. 
	\begin{center}
		\begin{tikzcd}[scale cd=0.7, column sep =large, row sep =normal]
			&\cdots\arrow[dr]&\cdots&\cdots&\\
			\cdots\arrow[dr]& & \mathcal{E}_{h\alpha}\otimes_{\mathcal{O}_X}\mathcal{F}_{t\beta} \arrow[dr,,"\text{Id}\otimes\psi_{\beta}"]\arrow[ur]&& \cdots \arrow[dl]\\
			\vdots& \mathcal{E}_{t\alpha}\otimes_{\mathcal{O}_X}\mathcal{F}_{t\beta}\arrow[dl]\arrow[ur,"\varphi_\alpha\otimes \text{Id}"] \arrow[dr,"\text{Id}\otimes \psi_{\beta}",']& & \mathcal{E}_{h\alpha}\otimes_{\mathcal{O}_X}\mathcal{F}_{h\beta}\arrow[dr]& \vdots \\
			\cdots &  &\mathcal{E}_{t\alpha}\otimes_{\mathcal{O}_X}\mathcal{F}_{h\beta}\arrow[dr]\arrow[ur,"\varphi_{\alpha}\otimes\text{Id}",'] &  & \cdots\\
			&\cdots\arrow[ur]&\cdots&\cdots& 
		\end{tikzcd}
	\end{center}
	Following Remark \ref{representationsInOtherCategories}, if $X=\text{Spec}(\mathbb{C})$, then $\text{Coh}(X)=\text{Vect}_\mathbb{C}$, that is, the category of finite dimensional vector spaces over the field of the complex numbers. So, in this case, untwisted representations are precisely the classical quiver representations and our notion of tensor product recovers that of Herschend \cite{HerschendTensorProducts} and Das, Dubey and Raghavendra \cite{DasEtAlTensors}.
	
	\section{Moduli of quiver representations and the quiver vortex equations: A brief survey}\label{surveySection}
	In view of some applications we will now consider either (twisted) quiver representations in the category of locally free sheaves (holomorphic vector bundles) over a compact Kähler manifold $X$ or the classical linear case when $X=\text{Spec}(\mathbb{C})$.
	\subsection{Generalities on (twisted) quiver bundles} 
	\begin{definition}
		Let $Q=(Q_0,Q_1,h,t)$ be a quiver and $M=\{M_\alpha\}_{\alpha\in Q_1}$ a collection of holomorphic vector bundles. A \emph{holomorphic $M$-twisted quiver bundle}, or simply, a \emph{$Q$-bundle}, is a $M$-twisted representation of the quiver $Q$ in the category of holomorphic vector bundles over $X$. In other words, a $Q$-bundle is a tuple $\mathcal{E}=((\mathcal{E}_i)_{i\in Q_0},(\varphi_\alpha)_{\alpha\in Q_1})$ where $\mathcal{E}_i\to X$ is a holomorphic vector bundle and $\varphi_\alpha:M_\alpha\otimes \mathcal{E}_{t\alpha}\to \mathcal{E}_{h\alpha}$ is a morphism of vector bundles. 
	\end{definition}
	We call 
	\begin{equation}\label{equationTypeQBundle}
		t(\mathcal{E})=(\text{rank}(\mathcal{E}_i),\deg(\mathcal{E}_i))_{i\in Q_0}\in \mathbb{N}^{|Q_0|}\times \mathbb{Z}^{|Q_0|}
	\end{equation}
	the \emph{type}  of the quiver bundle $\mathcal{E}$ and we note that this is independent of the morphisms $(\varphi_\alpha)_{\alpha\in Q_1}$. There is a natural notion of \emph{$Q$-subbundle}, this is given by a tuple $\mathcal{F}=(\mathcal{F}_i)_{i\in Q_0}$ with $\mathcal{F}_i$ a vector subbundle of $\mathcal{E}_i$ for all $i\in Q_0$ such that $\varphi_{\alpha}(M_\alpha\otimes \mathcal{F}_{t\alpha})\subseteq\mathcal{F}_{h\alpha}$ for all $\alpha\in Q_1$.  Given $\sigma=(\sigma_i)_{i\in Q_0}\in \mathbb{R}^{|Q_0|}_{>0}$, $\theta=(\theta_i)_{i\in Q_0}\in \mathbb{R}^{|Q_0|}$, the \emph{$(\sigma,\theta)$-slope} of a non-zero quiver bundle $\mathcal{E}$ is defined as 
	\begin{equation}\label{slopeStabilityQuiverBundles}
		\mu_{\sigma,\theta}(\mathcal{E})=\frac{\sum_{i\in Q_0}\sigma_i\deg(\mathcal{E}_i)+\theta_i\text{rank}(\mathcal{E}_i)}{\sum_{i\in Q_0}\text{rank}(\mathcal{E}_i)}. 
	\end{equation}
	
	\begin{definition}\label{defStableQuiverBundle}
		A $Q$-bundle is said to be $(\sigma,\theta)$-\emph{(semi)stable} if for every, non-trivial, quiver sub-bundle $\mathcal{F}$ of $\mathcal{E}$, $\mu_{\sigma,\theta}(\mathcal{F})(\leq)<\mu_{\sigma,\theta}(\mathcal{E})$. We say that a $Q$-bundle is $(\sigma,\theta)$-polystable if it can be written as finite direct sum of $(\sigma,\theta)$-stable $Q$-bundles all of them of the same $(\sigma,\theta)$-slope.
	\end{definition}
	
	\begin{remark}\label{invarianceByTranslationStability}
		Slope (semi)stability is invariant under translations, that is, a quiver bundle $\mathcal{E}$ is $(\sigma,\theta)$-(semi)stable if and only if it is $(\sigma',\theta')$-(semi)stable for $\sigma=\sigma'$, $\theta'=(\theta_i+\beta\sigma_i)_{i\in Q_0}$ and $\beta\in \mathbb{R}$.
	\end{remark}
	\subsection{The linear case}\label{theLinearCaseSubsection} In this section we deal with the case $X=\text{Spec}(\mathbb{C})$ and we closely follow \cite{KingModuli} and \cite{HoskinsSurvey}. As hinted before, since vector bundles over points are just vector spaces, representations of a quiver $Q$ are given by tuples $((V_i)_{i\in Q_0},(\varphi_\alpha)_{\alpha\in Q_1})$ where $V_i$ is a finite dimensional complex vector space and $\varphi_{\alpha}:V_{t\alpha}\to V_{h\alpha}$ is a linear map for all $i\in Q_0$ and $\alpha\in Q_1$. The tuple $d=(d_i:=\dim(V_i))_{i\in Q_0}\in \mathbb{N}^{|Q_0|}$ is known as the \emph{dimension vector} of the representation and we note that this is just the type of the representation in the sense of Equation (\ref{equationTypeQBundle}). The space of all representations of the quiver $Q$, with fixed dimension vector $d\in \mathbb{N}^{|Q_0|}$, is then parameterised by the affine space
	$$
	\text{Rep}(Q,d)=\bigoplus_{\alpha\in Q_1}\text{Hom}(\mathbb{C}^{t\alpha},\mathbb{C}^{h\alpha}).
	$$
	The reductive group $\text{GL}(d)=\prod_{i\in Q_0}\text{GL}(d_i)$ acts algebraically on this parameter space by conjugation:
	\begin{equation}\label{actionGLOnQuiverReps}
		g\cdot \varphi=(g_{h\alpha}\varphi_\alpha g_{t\alpha}^{-1})_{\alpha\in Q_1}
	\end{equation}
	for all $g:=(g_i)_{i\in Q_0}\in \text{GL}(d)$ and $\varphi:=(\varphi_\alpha)_{\alpha\in Q_1}\in \text{Rep}(Q,d)$. Notice that the orbits for this action correspond to isomorphism classes of $d$-dimensional quiver representations.
	\\
	\\
	We can specialize the (semi)stability condition in Equation (\ref{slopeStabilityQuiverBundles}) and Definition \ref{defStableQuiverBundle} to the linear case at hand. Notice that the degree terms in Equation (\ref{slopeStabilityQuiverBundles}) vanish, again because we are dealing with vector spaces, so the stability condition only depends on $\theta$ which in the algebraic category needs to be an element in $\mathbb{Z}^{|Q_0|}$. Following Remark \ref{invarianceByTranslationStability}, one can show that $\theta$-(semi)stability is equivalent to $\theta'$-(semi)stability for $\theta'\in\mathbb{Z}^{|Q_0|}$ given by
	$$
	\theta_i':=\theta_i\sum_{j\in Q_0}d_j-\sum_{j\in Q_0}\theta_jd_j.
	$$
	The parameter $\theta$ determines a character 
	\begin{equation}\label{character}
		\begin{matrix}
			\chi_\theta: & \text{GL}(d) & \longrightarrow & \mathbb{G}_m \\
			&(g_i)_{i\in Q_0}&\longmapsto& \prod_{i\in Q_0}\det g_i^{-\theta'_i}
		\end{matrix}
	\end{equation}
	which we use to linearize the action of $\text{GL}(d)$ on the parameter space $V=\text{Rep}(Q,d)$. By this we mean a lift of the $\text{GL}(d)$-action to the line bundle $\mathcal{L}_\theta:=V\times \mathbb{C}$ given by 
	$$
	g\cdot(\varphi,z)=(g\cdot\varphi,\chi_\theta(g)^{-1}z ).
	$$
	Invariant sections of $\mathcal{L}_\theta$ can be used to construct moduli spaces of $\theta$-(semi)stable quiver representations using GIT. One can show that 
	$$\mathcal{O}_V^{\chi_\theta^j}:=H^0(\text{Rep}(Q,d),\mathcal{L}_\theta^{\otimes j})^{\text{GL}(d)}=\{f\in \mathcal{O}_V| \ f(g\cdot\varphi) = \chi_\theta(g)^jf(\varphi) \forall\varphi\in \text{Rep}(Q,d) \text{ and }g\in \text{GL}(d)\}.$$ 
	Since the subgroup of $\text{GL}(d)$ given by 
	$$
	\Delta:= \{(t\text{Id}_{d_i})_{i\in Q_0}|\ t\in \mathbb{G}_m\}\cong\mathbb{G}_m
	$$
	acts trivially on the parameter space $\text{Rep}(Q,d)$, invariant sections of the line bundle $\mathcal{L}_\theta^{\otimes j}$ exist if and only if $\chi_\theta(\Delta)=1$. This holds by construction of $\theta'$ since $\theta'\cdot d=0$.
	\\
	\\
	The $\theta$-(semi)stability condition in Equation (\ref{slopeStabilityQuiverBundles}) coindides with the GIT $\chi_\theta$-(semi)stability condition \cite[Proposition 3.1]{KingModuli} so we have a chain of open inclusions $\text{Rep}(Q,d)^{\theta-s}\subseteq\text{Rep}(Q,d)^{\theta-ss}\subseteq\text{Rep}(Q,d)$. The GIT quotient
	\begin{equation}\label{GITQuotientLinearCase}
		\pi:\text{Rep}(Q,d)^{\theta-ss}\to \mathcal{M}^{\theta-ss}(Q,d):=\text{Rep}(Q,d)^{\theta-ss}\sslash\text{GL}(d)=\text{Proj}(\bigoplus_{j\geq 0}\mathcal{O}_V^{\chi_\theta^j})
	\end{equation}
	is a good quotient for the $\text{GL}(d)$-action on $\text{Rep}(Q,d)^{\theta-ss}$. It parameterises the closed orbits in $\text{Rep}(Q,d)^{\theta-ss}$ which are those corresponding to $\theta$-polystable representations.  Furthermore, $\pi$ restricts to a geometric quotient $\pi|_{\theta-s}:\text{Rep}(Q,d)^{\theta-s}\to \mathcal{M}^{\theta-s}(Q,d):=\text{Rep}(Q,d)^{\theta-s}/\text{GL}(d)$ on the stable set.  The latter quotient, if non-empty, is smooth so as open dense in $\mathcal{M}^{\theta-ss}(Q,d)$ \cite[Theorem 10.8]{libroKirillov}. The quotient $\mathcal{M}^{\theta-(s)s}(Q,d)$ is known as \emph{the moduli space of $d$-dimensional $\theta$-(semi)stable representations of $Q$}.
	\\
	\\
	It is also worth mentioning that we have an injection of algebras $\mathcal{O}_V^{\chi_\theta^0}=\mathcal{O}_V^{\text{GL}(d)}\hookrightarrow \bigoplus_{j\geq 0}\mathcal{O}_V^{\chi_\theta^j}$ which in turn induces a morphism of schemes 
	$$
	\text{Rep}(Q,d)^{\theta-ss}\sslash\text{GL}(d)\to \text{Rep}(Q,d)\sslash \text{GL}(d)=\text{Spec}(\mathcal{O}_{\text{Rep}(Q,d)}^{\text{GL}(d)})
	$$
	which is projective \cite[p.599]{ReinekeSurvey}. If $Q$ is a quiver without oriented cycles, then the GIT quotient $\text{Rep}(Q,d)\sslash\text{GL}(d)$ will be a point and therefore the moduli space $\mathcal{M}^{\theta-ss}(Q,d)$ will be a projective variety. This is a consequence of a classic result of Le Bruyn and Procesi \cite[Theorem 1]{LeBruynProcesi}.
	\begin{example}\label{exampleMatricesUnderConjugation}
		Let us consider the Jordan quiver, $Q_J$, from Figure \ref{JordanQuiver}. Note that we can identify $\text{Rep}(Q_J,d)\cong \text{End}(\mathbb{C}^d)$ so the action of $\text{GL}(d)$ on this space is given by conjugation of matrices. We now discuss on the moduli space of $d$-dimensional representations of the Jordan quiver for the stability parameter $\theta=0$ (this is, in fact, the only admissible one). As every polynomial invariant under the action of $\text{GL}(d)$ is completely determined by its values on diagonal matrices, one can see that
		$$
		\mathcal{M}(Q_J,d):=\mathcal{M}^{\theta-ss}(Q_J,d)=\text{End}(\mathbb{C}^d)\sslash \text{GL}(d)=\text{Spec}(\mathbb{C}[(a_{ij})_{i,j=1,\ldots,d}]^{\text{GL}(d)})=\text{Spec}(\mathbb{C}[\lambda_1,\ldots,\lambda_d]^{\mathfrak{S}_d})
		$$
		where the symmetric group in $d$ elements, $\mathfrak{S}_d$, acts on the ring $\mathbb{C}[\lambda_1,\ldots,\lambda_d]$ by permuting variables. Furthermore, the fundamental theorem of invariant theory tells us that $\mathbb{C}[\lambda_1,\ldots,\lambda_d]^{\mathfrak{S}_d}\cong \mathbb{C}[\text{Tr}(A),\ldots,\text{det}(A)]$, that is, the ring of polynomials in the variables given by the coefficients of the characteristic polynomial of $A=(a_{ij})_{i,j=1,\ldots,d}\in \text{End}(\mathbb{C}^d)$\ \cite[p.18]{KraftProcesi}. Thus,  
		$$
		\mathcal{M}(Q_J,d)\cong \mathbb{A}^d_\mathbb{C}.
		$$
	\end{example}
	\begin{example}\label{exampleProjectiveSpaceAsQuiverModuli}
		Let $Q_n$ be the Kronecker quiver with $n$ edges (see Example \ref{kroneckerQuiverExample}). For the dimension vector $d=(1,1)$ we can identify $\text{Rep}(Q,d)\cong \mathbb{A}^n_\mathbb{C}$ and the action of $\text{GL}(d)=\mathbb{G}_m\times \mathbb{G}_m$ on this space, following Equation (\ref{actionGLOnQuiverReps}), will be given by $(t_1,t_2)\cdot z=\text{diag}(t_2t_1^{-1},\ldots,t_2t_1^{-1})z$. Also, we note that $\mathcal{O}_{\text{Rep}(Q,d)}\cong \mathbb{C}[z_1,\ldots,z_n]$.
		\newline
		\newline
		We now study the GIT quotients and semistable loci for $\theta'=(1,-1)$. Fix a representation $z \in \text{Rep}(Q,d)$ and observe that if $z\neq 0$, there is only one non-trivial and proper subrepresentation which has dimension vector $(0,1)$. On the other hand, if $z=0$ we have proper and non-trivial subrepresentations of dimension vectors $(0,1)$ and $(1,0)$. Hence, $\text{Rep}(Q,d)^{\theta'-ss}=\text{Rep}(Q,d)^{\theta'-s}=\mathbb{A}^n_\mathbb{C}\setminus\{0\}$.
		\\
		\\
		Finally, we will give an explicit description of the GIT quotient $\mathcal{M}(Q,d)^{\theta'-ss}$. We observe that the character $\chi_{\theta'}:\text{GL}_d\to \mathbb{G}_m$ is given by $(t_1,t_2)\mapsto t_2/t_1$ and therefore the action of $\text{GL}(d)$ on the line bundle $\mathcal{L}_{\theta'}$ will be given by $(t_1,t_2)\cdot (x,y)=((t_1,t_2)\cdot x,(t_1/t_2)y)$. For all $j\geq 0$, $$\mathcal{O}_{\mathbb{A}^n_\mathbb{C}}^{\chi_{\theta'}^j}=\{f\in \mathcal{O}_{\text{Rep}(Q,d)}\cong \mathbb{C}[z_1,\ldots,z_n]\ |\ f \text{ homogeneous of degree }j\}.$$ 
		Thus, 
		$$
		\mathcal{M}(Q,d)^{\theta'-s}=\text{Proj}(\mathbb{C}[z_1,\ldots,z_n])=\mathbb{P}_\mathbb{C}^{n-1}
		$$
		and the map $\pi:\text{Rep}(Q,d)^{\theta'-s}\to	\mathcal{M}(Q,d)^{\theta'-s}$ is just the canonical projection. 
	\end{example}
	There is a symplectic point of view on these moduli spaces that we now discuss.  For all $i\in Q_0$, fix an hermitian inner product on $\mathbb{C}^{d_i}$.  Now, on each direct summand of $\text{Rep}(Q,d)$ we consider the standard hermitian  inner product given by 
	\begin{equation}\label{standardHermitianPairing}
		\begin{matrix}
			\langle A , B \rangle = \text{Tr}(AB^*).
		\end{matrix}
	\end{equation} 
	An inner product on the parameter space $\text{Rep}(Q,d)$ is then given by
	\begin{equation}\label{hermitianInnerProductParameterSpaceQuiverReps}
		\begin{matrix}
			H: & \text{Rep}(Q,d)\times \text{Rep}(Q,d)&\longrightarrow & \mathbb{C} \\
			&((\varphi_\alpha)_{\alpha\in Q_1},(\psi_\alpha)_{\alpha\in Q_1})&\longmapsto & \sum_{\alpha\in Q_1}\text{Tr}(\varphi_{\alpha}\psi_\alpha^*).
		\end{matrix}
	\end{equation}
	The group $\text{U}(d)=\prod_{i\in Q_0}\text{U}(d_i)$, with Lie algebra $\mathfrak{u}(d)=\prod_{i\in Q_0}\mathfrak{u}(d_i)$, is a maximal compact subgroup of $\text{GL}(d)$ that preserves this hermitian form. 
	Furthermore, the inner product $H$ induces a Kähler structure on $\text{Rep}(Q,d)$ by taking the Kähler form to be $\omega=-2\text{Im}H$. Notice that the symplectic form $\omega$ is preserved by the action of $\text{U}(d)$ since $H$ is $\text{U}(d)$-invariant. Therefore, the linear action of the reductive group $\text{GL}(d)$ on $\text{Rep}(Q,d)$ restricts to a symplectic linear one on $\text{U}(d)$. 
	A moment map for this action is given by 	
	\begin{equation}
		\begin{matrix}\label{defMomentMapLinearCase}
			\mu: & \text{Rep}(Q,d)&\longrightarrow &\mathfrak{u}^*(d) \\
			&\varphi&\longmapsto& (A\longmapsto\sqrt{-1}H(A\cdot \varphi,\varphi)).
		\end{matrix}
	\end{equation}
	Here, the action of $A=(A_i)_{i\in Q_0}\in \mathfrak{u}(d)$ on $\varphi=(\varphi_{\alpha})_{\alpha\in Q_1}\in \text{Rep}(Q,d)$ is given by 
	\begin{equation}\label{infinitesimalActionLieAlgebraUnitaryGroups}
		A\cdot \varphi = (A_{h\alpha}\varphi_\alpha-\varphi_\alpha A_{t\alpha})_{\alpha\in Q_1}
	\end{equation}
	and notice that this is just infinitesimal action or derivative of the action described in Equation (\ref{actionGLOnQuiverReps}). The moment map can be then written more explicitly as
	\begin{align*}
		\mu_\varphi(A)=\sqrt{-1}H(A\cdot \varphi, \varphi ) & =\sqrt{-1}\sum_{\alpha\in Q_1}\text{Tr}(A_{h\alpha}\varphi_\alpha\varphi_\alpha^*-\varphi_\alpha A_{t\alpha}\varphi_\alpha^*) \\
		&=\sqrt{-1}\sum_{i\in Q_0}\text{Tr}(A_i[\varphi,\varphi^*]_i)
	\end{align*}
	where we have denoted 
	\begin{equation}\label{commutatorMomentMap}
		[\varphi,\varphi^*]_i:=\sum_{h\alpha = i}\varphi_\alpha\varphi_\alpha^*-\sum_{t\alpha=i}\varphi_\alpha^*\varphi_\alpha.
	\end{equation}
	The hermitian pairing in Equation (\ref{standardHermitianPairing})
	induces an isomorphism $\mathfrak{u}(d)\cong \mathfrak{u}(d)^*$ and this is why the moment map is often written as $$
	\mu(\varphi)=-\sqrt{-1}([\varphi,\varphi^*]_i)_{i\in Q_0}
	$$
	for all $\varphi\in \text{Rep}(Q,d)$.
	\\
	\\
	The moment map described above is  unique up to addition of an element $\eta\in\mathfrak{u}(d)^*$ fixed by the coadjoint action of $\text{U}(d)$ on $\mathfrak{u}(d)^*$. Recall that any tuple $\theta=(\theta_i)_{i\in Q_0}\in \mathbb{Z}^{|Q_0|}$ gives a character $\chi_\theta:\text{GL}(d)\to\mathbb{G}_m$ (see Equation (\ref{character})) whose restriction to $\text{U}(d)$ has image in $\text{U}(1)$. Thus the derivative 
	$$\begin{matrix}
		\sqrt{-1}\ \text{d}\chi_\theta|_{\text{U}(d)}:&\mathfrak{u}(d)& \longrightarrow &\mathfrak{u}(1) \\
		&A&\longmapsto&\sqrt{-1 }\ \sum_{i\in Q_0}\text{Tr}(A_i(-\theta_i'\text{ Id})), 
	\end{matrix}$$
	seen as an element of $\mathfrak{u}(d)^*$, is a fixed point for the coadjoint action of $\text{U}(d)$ on $\mathfrak{u}(d)^*$. We have made this digression just to point out that we can consider different fibers $\mu^{-1}(\eta)$ of the moment map given in Equation (\ref{defMomentMapLinearCase}),  with $\eta\in\mathfrak{u}(d)^*$ a fixed point for the coadjoint action, and study the corresponding symplectic reductions $\mu^{-1}(\eta)/\text{U}(d)$. In this regard, King showed the following: 
	\begin{theorem}[\cite{KingModuli}]\label{quiverVortexEqnsLinearCase}
		The set {\normalfont$\mu^{-1}(\sqrt{-1}\ \text{d}\chi_\theta|_{\text{U}(d)})$}, which is the solution space to the equation
		{\normalfont
			\begin{equation}\label{quiverVortexEquationsForAPoint}
				\sum_{h\alpha = i}\varphi_\alpha\varphi_\alpha^*-\sum_{t\alpha=i}\varphi_\alpha^*\varphi_\alpha = \theta_i'\text{Id}
			\end{equation}
		}
		
		\noindent for all $i\in Q_0$, intersects each closed  {\normalfont$\text{GL}(d)$}-orbit in {\normalfont$\text{Rep}^{\theta-ss}(Q,d)$}. Moreover, every such closed orbit is intersected by only one {\normalfont$\text{U}(d)$}-orbit in {\normalfont$\mu^{-1}(\sqrt{-1}\ \text{d}\chi_\theta|_{\text{U}(d)})$}.
	\end{theorem}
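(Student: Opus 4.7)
The plan is to apply the Kempf--Ness theorem to the linear action of $\text{GL}(d)$ on the Hermitian vector space $\text{Rep}(Q,d)$, with the linearization $\mathcal{L}_\theta$ provided by the character $\chi_\theta$ of Equation (\ref{character}). First I would verify that under the Hermitian identification $\mathfrak{u}(d) \cong \mathfrak{u}(d)^*$ coming from Equation (\ref{standardHermitianPairing}), the equation $\mu(\varphi) = \sqrt{-1}\,\mathrm{d}\chi_\theta|_{\text{U}(d)}$ is literally Equation (\ref{quiverVortexEquationsForAPoint}). This is a direct computation from Equation (\ref{defMomentMapLinearCase}) and Equation (\ref{infinitesimalActionLieAlgebraUnitaryGroups}), using cyclicity of the trace together with the definition of $\theta'$.

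Next, for each $\varphi \in \text{Rep}(Q,d)$ I would fix a non-zero lift $\tilde\varphi \in \mathcal{L}_\theta$ and introduce the Kempf--Ness function $p_\varphi : \text{GL}(d)/\text{U}(d) \to \mathbb{R}$ defined by $p_\varphi(g) = \log\|g\cdot\tilde\varphi\|^2$, where the norm uses the hermitian pairing $H$ of Equation (\ref{hermitianInnerProductParameterSpaceQuiverReps}) and the standard Hermitian metric on $\mathcal{L}_\theta$. Two standard analytic facts are then to be established: first, $p_\varphi$ is convex along the geodesics $t \mapsto g\cdot\exp(\sqrt{-1}\,tA)$ of the symmetric space $\text{GL}(d)/\text{U}(d)$, and strictly so unless $A$ lies in the stabilizer of $g\cdot\varphi$; second, the $t$-derivative of $p_\varphi$ along such a geodesic at $t=0$ equals a positive multiple of the pairing of $\mu(g\cdot\varphi) - \sqrt{-1}\,\mathrm{d}\chi_\theta|_{\text{U}(d)}$ with $A$. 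Together these imply that the critical points of $p_\varphi$ along a single $\text{GL}(d)$-orbit are exactly the solutions of Equation (\ref{quiverVortexEquationsForAPoint}), and that when non-empty the critical set forms a single $\text{U}(d)$-orbit.

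It remains to relate the existence of a minimum of $p_\varphi$ on a $\text{GL}(d)$-orbit to closedness of the orbit in $\text{Rep}^{\theta-ss}(Q,d)$. For this I would invoke the Hilbert--Mumford/Kempf--Ness dictionary: the weight with which a one-parameter subgroup of $\text{GL}(d)$ acts on the fibre of $\mathcal{L}_\theta$ at a limit point governs the asymptotic slope of $p_\varphi$ along the corresponding geodesic direction. If the $\text{GL}(d)$-orbit of $\varphi$ is closed in $\text{Rep}^{\theta-ss}(Q,d)$, then by the numerical criterion for $\chi_\theta$-semistability of \cite{KingModuli} no one-parameter subgroup can drive $p_\varphi$ to $-\infty$, so $p_\varphi$ is proper from below on the orbit and attains its infimum. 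Combined with the uniqueness statement of the previous paragraph, this gives both halves of the theorem.

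The main obstacle is precisely this last step: carefully translating closedness of a $\text{GL}(d)$-orbit in the $\chi_\theta$-semistable locus into properness of $p_\varphi$. This is where the shift by $\sqrt{-1}\,\mathrm{d}\chi_\theta|_{\text{U}(d)}$ in the moment map is essential, and where the normalisation passing from $\theta$ to $\theta'$ together with the triviality $\chi_\theta(\Delta) = 1$ of Section \ref{theLinearCaseSubsection} play their role, ensuring that the destabilising one-parameter subgroups predicted by Hilbert--Mumford are exactly those detected by the shifted moment map. By contrast, the convexity of $p_\varphi$ along geodesics is the only substantive analytic input; everything else in the argument is formal bookkeeping.
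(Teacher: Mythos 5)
The paper gives no proof of this statement: it is quoted verbatim from King \cite{KingModuli}, so your proposal must be judged against King's own argument, which is indeed the Kempf--Ness strategy you describe. Your first step (identifying the level set $\mu^{-1}(\sqrt{-1}\,\mathrm{d}\chi_\theta|_{\text{U}(d)})$ with the solution set of Equation (\ref{quiverVortexEquationsForAPoint}) via the trace pairing) and your overall plan (minimise a functional on $\text{GL}(d)/\text{U}(d)$ that is convex along geodesics and whose critical points are the moment-map solutions) are correct in outline. Two of your steps, however, do not work as written.

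First, the functional $p_\varphi(g)=\log\|g\cdot\tilde\varphi\|^2$ has the wrong Euler--Lagrange equation. Differentiating along $t\mapsto e^{\sqrt{-1}tA}g$ with $\tilde\varphi=(\varphi,z)$ and $g\cdot(\varphi,z)=(g\cdot\varphi,\chi_\theta(g)^{-1}z)$, the shift term acquires the factor $|\chi_\theta(g)^{-1}z|^2$: the critical points satisfy $\mu(g\cdot\varphi)=\lambda\,\sqrt{-1}\,\mathrm{d}\chi_\theta|_{\text{U}(d)}$ with $\lambda=|\chi_\theta(g)^{-1}z|^2>0$ depending on the critical point, not $\lambda=1$. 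Since $\chi_\theta$ is non-constant on $\text{GL}(d)$ whenever $\theta'\neq 0$, and a general $\text{GL}(d)$-orbit is not preserved by rescaling $\varphi\mapsto t\varphi$, your argument only shows that closed orbits meet the cone $\{\mu=\lambda\sqrt{-1}\,\mathrm{d}\chi_\theta,\ \lambda>0\}$, which is strictly weaker than the theorem. The functional whose critical points give the exact normalisation is the additive one, $g\mapsto\tfrac12\|g\cdot\varphi\|^2-\log|\chi_\theta(g)|$; this (and not the logarithm of the norm of the lift) is the Kempf--Ness function appropriate to affine GIT twisted by a character. Second, the inference ``no one-parameter subgroup drives $p_\varphi$ to $-\infty$, so $p_\varphi$ is proper from below and attains its infimum'' is a non sequitur: a geodesically convex function can be bounded below along every geodesic ray yet fail to attain its infimum (already $e^{-x}+y^2$ on $\mathbb{R}^2$ does this), and upgrading boundedness along rays to attainment is precisely the hard half of the Kempf--Ness theorem, not a consequence of the Hilbert--Mumford criterion. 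For the implication you actually need (closed orbit $\Rightarrow$ minimum attained) the standard route avoids this entirely: a $\chi_\theta$-semistable $\varphi$ has closed $\text{GL}(d)$-orbit in $\text{Rep}(Q,d)^{\theta-ss}$ if and only if the orbit of the lift $\tilde\varphi$ is closed in the affine space $\text{Rep}(Q,d)\times\mathbb{A}^1_{\mathbb{C}}$, where the norm is proper, so its restriction to the closed orbit attains its infimum; one then reconciles this minimiser with the critical points of the additive functional. Your uniqueness argument via strict convexity is fine once these two points are repaired.
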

	Thus, a quiver representation satisfies Equation (\ref{quiverVortexEquationsForAPoint}), for some choice of hermitian inner products, if and only if it is $\theta$-polystable. In other words, we have a bijection, which is in fact a homeomorphism \cite[Theorem 4.2]{HoskinsStratifications},
	$$
	\mu^{-1}(\sqrt{-1} \ \text{d}\chi_\theta|_{\text{U}(d)})/\text{U}(d)\overset{1:1}{\longleftrightarrow} \text{Rep}(Q,d)^{\theta-ss}\sslash\text{GL}(d).
	$$
	For suitable choices of stability parameter $\theta$ and dimension vector $d$, the symplectic reduction $\mu^{-1}(\sqrt{-1}\ \text{d}\chi_\theta|_{\text{U}(d)})/\text{U}(d)$ will be a smooth symplectic manifold, in fact Kähler.
	\begin{remark}\label{indivisibilityDimensionVector}
		In the words of King \cite[Remark 5.4]{KingModuli}, if we choose the dimension vector $d=(d_i)_{i\in Q_0}\in \mathbb{N}^{|Q_0|}$ to be \emph{indivisble} then there exist an open dense subset of $\mathbb{R}^{|Q_0|}$ for which any stability parameter $\theta$ in this subset is such that the $\theta$-semistable locus coincides with the stable one and  $\mathcal{M}(Q,d)^{\theta-ss}=\mathcal{M}(Q,d)^{\theta-s}$ is smooth. The indisivibility condition on the dimension vector $d\in \mathbb{N}^{|Q_0|}$ means that $\text{gcd}\{d_i|i\in Q_0\}=1$. 
	\end{remark} 
	
	\subsection{Moduli of quiver representations with relations}\label{moduliQuiversWithRelations} We briefly discuss on the moduli spaces of quiver representations with relations. We refer the interested reader to Wilkin \cite[Section 3.1]{Wilkin}. From Definition \ref{definitionRelation} recall that a relation in $Q$, when $X=\text{Spec}(\mathbb{C})$, is a formal sum of the form 
	$$
	\sum_{\substack{p\text{ path, }|p|\geq 2\\ tp=i,\ hp=j}}c_p\cdot p,\ c_p\in \mathbb{C}
	$$
	where all but finitely many terms are zero. Let $\mathcal{R}$ be a set of relations of the quiver $Q$ and $d\in \mathbb{N}^{|Q_0|}$ be a dimension vector. For all $r\in \mathcal{R}$, let $tr,\ hr\in Q_0$ denote, respectively, the common head and tail of all the paths appearing in $r$.  Moreover, every such $r\in \mathcal{R}$, determines a morphism of affine varieties
	$
	\nu_r:\text{Rep}(Q,d)\to \text{Hom}(\mathbb{C}^{d_{tr}},\mathbb{C}^{d_{hr}})
	$
	given by composing and adding linear maps according to the information given by the relation $r$. The $d$-dimensional representations of $Q$ satisfying the relations in $\mathcal{R}$ (notice that this agrees with Definition \ref{defTwistedRepresentationWithRelations}) are then parameterised by the closed affine subvariety 
	$$
	\text{Rep}(Q,d,\mathcal{R}):=\bigcap_{r\in \mathcal{R}}\nu^{-1}_r(0)\subseteq\text{Rep}(Q,d).
	$$
	We remark that, under the equivalence of categories in Theorem \ref{equivalenceOfCatTwistedRepresentationsModulesWithRelations}, this set also parameterises modules over the quotient of the path algebra $\mathcal{A}_Q/\mathcal{I}_\mathcal{R}$ of some fixed dimension.  Also, an easy calculation shows that the variety $\text{Rep}(Q,d,\mathcal{R})$ is invariant under the action of $\text{GL}(d)$ which prompts the following definition:
	\begin{definition}
		For a fixed dimension vector $d\in \mathbb{N}^{|Q_0|}$, we call the GIT quotient 
		$$
		\mathcal{M}^{\theta-ss}(Q,d,\mathcal{R}):=\text{Rep}(Q,d,\mathcal{R})\cap \text{Rep}(Q,d)^{\theta-ss}\sslash\text{GL}(d)
		$$
		the \emph{moduli space of $\theta$-semistable representations of Q satisfying the relations $\mathcal{R}$}.
	\end{definition}
	
	\subsection{The quiver vortex equations}
	In this subsection we assume, for the sake of simplicity, that $X$ is a Riemann surface albeit all the results hold true for more general smooth projective Kähler manifolds. We will closely follow the exposition from Álvarez-Cónsul and García-Prada \cite{AlvarezConsulGarciaPradaHKCorrespondenceQuiversVortices}. Let $Q$ be a quiver, $M=\{M_\alpha\}_{\alpha\in Q_1}$ a fixed collection of holomorphic vector bundles together with fixed hermitian metrics $q_\alpha$ on each $M_\alpha$ and $E=((E_i)_{i\in Q_0},(\phi_\alpha)_{\alpha\in Q_1})$ a $M$-twisted representation of $Q$ in the category of smooth complex vector bundles over $X$.  
	\\
	\\
	For all $i\in Q_0$, we let $H_i$ be an hermitian metric on the vector bundle $E_i\to X$. Let $\mathcal{G}_i$ be the group of unitary gauge transformations of the vector bundle $E_i\to X$ or, in other words, the group of automorphisms $\Phi:E_i\to E_i$ such that 
	$$
	H_i(\Phi(s),\Phi(s'))=H_i(s,s')
	$$
	for all $s,s'\in \Omega^0(X,E_i)$. The standard hermitian product on $\mathfrak{u}(d_i)$, given by that in Equation (\ref{standardHermitianPairing}), induces an $L^2$ inner product on $\text{Lie }\mathcal{G}_i$ which in turn gives an inclusion $\text{Lie }\mathcal{G}_i\subseteq \text{Lie }\mathcal{G}_i^*$.
	\\
	\\
	Let $\mathcal{A}_i$ be the set of unitary connections on the vector bundle $E_i\to X$. The gauge group $\mathcal{G}_i$ acts on $\mathcal{A}_i$ by  
	$$
	\begin{matrix}
		\mathcal{G}_i\times\mathcal{A}_i & \longrightarrow & \mathcal{A}_i \\
		(\Phi,\nabla) & \longmapsto & \Phi\circ\nabla\circ\Phi^{-1}
	\end{matrix}
	$$
	so we have, in particular, a natural action of the group $\mathcal{G}=\prod_{i\in Q_0}\mathcal{G}_i$ on the space of unitary connections $\mathcal{A}=\prod_{i\in Q_0}\mathcal{A}_i$. Given two unitary connections $\nabla_1,\nabla_2$ on the vector bundle $E_i\to X$, we have that $\nabla_1-\nabla_2\in \Omega^1(X,\text{ad}(E_i))$. Thus,  the tangent space to $\mathcal{A}_i$, seen as an infinite-dimensional manifold, can be identified with the vector space $\Omega^1(X,\text{ad}(E_i))$. Atiyah and Bott \cite{AtiyahBott} showed that 
	$$
	\omega_i(\xi,\eta)=\int_X\Lambda\text{Tr}(\xi\wedge\eta),
	$$
	with $\xi, \eta\in \Omega^1(X,\text{ad}(E_i))$ and $\Lambda:\Omega^{l,k}(X)\to \Omega^{l-1,k-1}(X)$ the contraction operator with respect to a fixed Kähler form on $X$, defines a symplectic form on $\mathcal{A}_i$. Moreover, they showed the existence of a moment map for the action of the gauge group on the space of unitary connections. This is given by  
	$$
	\begin{matrix}
		\mu_i: & \mathcal{A}_i & \longrightarrow & \text{Lie }\mathcal{G}_i^* \\
		& \nabla_i & \longmapsto & \Lambda F_i.
	\end{matrix}
	$$
	Here, $F_i\in \Omega^{2}(X,\text{ad}(E_i))$ is the curvature of the unitary connection $\nabla_i$ and thus we have that $\Lambda F_i\in \Omega^0(X,\text{ad}(E_i))=\text{Lie}\ \mathcal{G}_i$ which is seen as an element in $\text{Lie }\mathcal{G}_i^*$ by means of the inclusion previously described.
	\\
	\\
	Let 
	$$\mathcal{R}ep(Q,E)=\Omega^0\bigg(X,\bigoplus_{\alpha\in Q_1}\text{Hom}(M_\alpha\otimes E_{t\alpha},E_{h\alpha})\bigg).$$
	For $\phi\in \mathcal{R}ep(Q,E)$ we can identify $T_\phi\mathcal{R}ep(Q,E)\cong \mathcal{R}ep(Q,E)$. In analogy with the linear case, an hermitian metric on $\mathcal{R}ep(Q,E)$ is given by 
	$$
	\langle\phi,\psi\rangle=\sum_{\alpha\in Q_1}\text{Tr}(\phi_\alpha\circ \psi_\alpha^{*})
	$$
	for $\phi=(\phi_\alpha)_{\alpha\in Q_1},\psi=(\psi_\alpha)_{\alpha\in Q_1}\in\mathcal{R}ep(Q,E)$ and $\psi_\alpha^*:E_{h\alpha}\to M_\alpha\otimes E_{t\alpha}$ the smooth adjoint with respect to the hermitian metrics $H_i$ and $q_\alpha$. As expected, from what we know for the linear case,
	$$\omega=-2\text{Im}(\langle \_\ ,\_\rangle_{L^2})$$
	is a symplectic form on $\mathcal{R}ep(Q,E)$. The gauge group $\mathcal{G}$ acts on $\mathcal{R}ep(Q,E)$ as
	$$
	\begin{matrix}
		\mathcal{G}\times\mathcal{R}ep(Q,E) & \longrightarrow & \mathcal{R}ep(Q,E) \\
		((\Phi_i)_{i\in Q_0},(\varphi_\alpha)_{\alpha\in Q_1}) & \longmapsto & (\Phi_{h\alpha}\circ \varphi_\alpha\circ (\text{Id}_{M_\alpha}\otimes\Phi_{t\alpha})^{-1}).
	\end{matrix}
	$$
	As in Equation (\ref{defMomentMapLinearCase}), a moment map for the action of the gauge group just described is given by
	$$
	\begin{matrix}
		\mu: & \mathcal{R}ep(Q,E)&\longrightarrow & \text{Lie }\mathcal{G}^* \\
		&\varphi&\longmapsto&-\sqrt{-1} ([\varphi,\varphi^*]_i)_{i\in Q_0}.
	\end{matrix}
	$$
	Here, $[\varphi,\varphi^*]_i$ is as in Equation (\ref{commutatorMomentMap}) with the difference that we are now considering morphisms of vector bundles rather than linear maps. Also, notice that for all $\alpha\in Q_1$, $\varphi_\alpha$ and its adjoint $\varphi_\alpha^*$ can be seen as morphisms $\varphi_\alpha:E_{t\alpha}\to M_\alpha^*\otimes E_{h\alpha}$ and $\varphi_\alpha^*:M_\alpha^*\otimes E_{h\alpha}\to E_{t\alpha}$ so the composition $\varphi_\alpha^*\circ \varphi_\alpha$ in the commutators $[\varphi,\varphi^*]_i$ make sense. 
	\\
	\\
	The product $\mathcal{A}\times \mathcal{R}ep(Q,E)$ is a symplectic manifold for any form in the family
	$$
	\omega_\sigma=\sum_{i\in Q_0}\sigma_i \omega_i + \omega
	$$
	with $\sigma=(\sigma_i)_{i\in Q_0}\in \mathbb{R}^{|Q_0|}_{>0}$. 
	For a fixed symplectic form $\omega_\sigma$, a moment map for the action of the gauge group $\mathcal{G}$ on $\mathcal{A}\times \mathcal{R}ep(Q,E)$ is 
	\begin{align*}
		\mu_{\mathcal{A}\times \mathcal{R}ep(Q,E)}((\nabla_i)_{i\in Q_0},\varphi)&=\sum_{i\in Q_0}\sigma_i\mu_i (\nabla_i)+ \mu_\mathcal{S}(\varphi) \\
		&=(\sigma_i\Lambda F_i-\sqrt{-1}[\varphi,\varphi^*]_i)_{i\in Q_0}.
	\end{align*}
	In analogy with Theorem \ref{quiverVortexEqnsLinearCase}, one can study the set of solutions to the equations 
	\begin{equation}\label{infiniteDimensionalMomentMap}
		\mu_{\mathcal{A}\times \mathcal{R}ep(Q,E)}=c
	\end{equation}
	where $c\in \text{Lie }\mathcal{G}^*$ is fiberwise a fixed point for the coadjoint action of $\text{U}(d)$ on $\mathfrak{u}(d)^*$. This prompts the following definition:
	\begin{definition}\label{definitionQuiverVortexEquations}
		Let $\mathcal{E}=((\mathcal{E}_i)_{i\in Q_0},(\varphi_\alpha)_{\alpha\in Q_1})$ be a holomorphic $M$-twisted $Q$-bundle, $\sigma=(\sigma_i)_{i\in Q_0}\in \mathbb{R}^{|Q_0|}_{>0}$ and $\ \theta=(\theta_i)_{i\in Q_0}\in \mathbb{R}^{|Q_0|}$. A hermitian metric $H=\{H_i\}_{i\in Q_0}$ on $\mathcal{E}$ or a choice of a hermitian metric on each vector bundle $\mathcal{E}_i\to X$, satisfies the \emph{quiver $(\sigma,\theta)$-vortex equations} if 
		\begin{equation}\label{quiverVortexEquations}
			\sigma_i\sqrt{-1}\Lambda F_i+[\varphi,\varphi^*]_i=\theta_i\text{Id}_{E_i}
		\end{equation}
		for all vertex $i\in Q_0$. Here $F_i$ is the curvature of the Chern connection associated to the hermitian metric $H_i$ on the holomorphic vector bundle $\mathcal{E}_i\to X$. 
	\end{definition}
	We now discuss on this definition. Firstly, we remark that $-\sqrt{-1}\theta=(-\sqrt{-1}\theta_i\text{Id})_{i\in Q_0}$ defines a section of the Lie algebra of the gauge group $ \text{Lie }\mathcal{G}$. Moreover, the induced element in $\text{Lie }\mathcal{G}^*$  is, fiberwise, a fixed point for the coadjoint action of $\text{U}(d)$ on $\mathfrak{u}(d)^*$ and hence Equation (\ref{quiverVortexEquations}) is well-defined.  Secondly, we recall that the Chern connection on a hermitian and holomorphic vector bundle is the unique hermitian connection whose (0,1)-part coincides with the Dolbeault operator associated to the holomorphic structure. Thus, Definition \ref{definitionQuiverVortexEquations} picks the holomorphic solutions to Equation (\ref{infiniteDimensionalMomentMap}).
	\\
	\\
	The following theorem, which is due to Álvarez-Cónsul and García-Prada \cite[Theorem 3.1]{AlvarezConsulGarciaPradaHKCorrespondenceQuiversVortices}, describes the holomorphic solutions to the quiver vortex equations in terms of the stability conditions introduced in Definition \ref{defStableQuiverBundle}:
	\begin{theorem}[Hitchin-Kobayashi correspondence for (twisted) quiver bundles] A $M$-twisted holomorphic $Q$-bundle $\mathcal{E}=((\mathcal{E}_i)_{i\in Q_0},(\varphi_\alpha)_{\alpha\in Q_1})$ is $(\sigma,\theta)$-polystable if and only if it admits a hermitian metric $H$ satisfying the quiver $(\sigma,\theta)$-vortex equations.
	\end{theorem}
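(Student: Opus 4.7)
The plan is to establish the two directions of the Hitchin-Kobayashi correspondence separately, with the easier implication coming from a Chern-Weil computation and the harder one from a variational argument.

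For the direction ``vortex $\Rightarrow$ polystable,'' I would take a hermitian metric $H=\{H_i\}$ satisfying \eqref{quiverVortexEquations} and an arbitrary proper, non-zero quiver sub-bundle $\mathcal{F}\subseteq \mathcal{E}$. Let $\pi_i\in\Omega^0(X,\text{End}(\mathcal{E}_i))$ be the $H_i$-orthogonal projection onto $\mathcal{F}_i$, so that $\pi_i=\pi_i^2=\pi_i^*$. The standard identity $\text{tr}(\pi_i\sqrt{-1}\Lambda F_{\mathcal{E}_i})=\sqrt{-1}\Lambda\text{tr}(F_{\mathcal{F}_i})+|\overline{\partial}\pi_i|^2$ then lets one express $\deg(\mathcal{F}_i)$ in terms of the trace of $\pi_i$ against the curvature of $\mathcal{E}_i$, modulo a non-negative second-fundamental-form term. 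Multiplying the vortex equation for the vertex $i$ by $\pi_i$, taking traces, summing over $i\in Q_0$ weighted by $\sigma_i$, and integrating over $X$ against the Kähler form converts $\sigma_i\sqrt{-1}\Lambda F_i$ into $\sigma_i\deg(\mathcal{F}_i)$ plus a non-negative remainder, while the $\theta_i\text{Id}$ term produces $\theta_i\text{rank}(\mathcal{F}_i)$. The trace of $[\varphi,\varphi^*]_i$ against $\pi_i$, after summing over $i$, rearranges into a sum of squared $L^2$-norms of the ``off-diagonal'' components $(\text{Id}-\pi_{h\alpha})\varphi_\alpha(\text{Id}_{M_\alpha}\otimes \pi_{t\alpha})$, which vanish precisely when $\mathcal{F}$ is $\varphi$-invariant (they do by hypothesis). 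Comparing with the same computation for $\pi_i=\text{Id}$ gives $\mu_{\sigma,\theta}(\mathcal{F})\le \mu_{\sigma,\theta}(\mathcal{E})$, with equality forcing the second fundamental form to vanish and $\mathcal{F}$ to split off as an orthogonal direct summand preserved by all $\varphi_\alpha$; iterating this splitting yields polystability.

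For the direction ``polystable $\Rightarrow$ existence of vortex metric,'' the strategy is the Donaldson functional approach adapted to the quiver setting. Fix a reference metric $K$; parameterise hermitian metrics by $H=Ke^{s}$ with $s\in\Omega^0(X,\bigoplus_i\text{End}_{H_i}(\mathcal{E}_i))$ self-adjoint. Define a functional $\mathcal{M}_{\sigma,\theta}(K,H)$ as the sum over $i\in Q_0$ of the Donaldson functional for $(\mathcal{E}_i,K_i,H_i)$ weighted by $\sigma_i$, plus an $L^2$-energy term for the twisted morphisms $\varphi_\alpha$ measured with the two metrics, plus a linear term in $\theta$; its first variation at $H$ is exactly the left-hand side of \eqref{quiverVortexEquations} minus $\theta_i\text{Id}$. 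One verifies convexity along the geodesic $H_t=Ke^{ts}$, reducing the problem to showing that when $\mathcal{E}$ is $(\sigma,\theta)$-stable, $\mathcal{M}_{\sigma,\theta}$ is proper on the space of trace-normalised metrics, so that a minimiser exists and satisfies the Euler--Lagrange equation.

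The hard step, and where I expect the real work to concentrate, is the coercivity (``main estimate''). The standard route is the Uhlenbeck--Yau method: assume for contradiction that $\mathcal{M}_{\sigma,\theta}(K,Ke^{s_j})$ stays bounded while $\|s_j\|_{L^1}\to\infty$, rescale $u_j=s_j/\|s_j\|_{L^1}$, and extract a weak $L^2_1$-limit $u_\infty$ whose spectral projections, by a regularity argument of Uhlenbeck--Yau, define a weakly holomorphic sub-bundle at each vertex; invariance of this sub-bundle under the $\varphi_\alpha$ (obtained by passing to the limit in the boundedness of the energy of the off-diagonal pieces) makes it a quiver sub-bundle, whose $(\sigma,\theta)$-slope violates stability, a contradiction. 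For polystable $\mathcal{E}=\bigoplus_k\mathcal{E}^{(k)}$ with equal slope, apply the stable case to each summand and take the direct-sum metric; that this orthogonal direct sum solves the vortex equations follows from linearity of \eqref{quiverVortexEquations} in the slope parameter $\theta$ and from the fact that off-diagonal contributions to $[\varphi,\varphi^*]_i$ vanish for the direct-sum decomposition. This completes the correspondence, with the analytic coercivity estimate being the only non-formal input.
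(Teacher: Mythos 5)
The paper does not actually prove this statement: it is quoted verbatim as Theorem 3.1 of \'Alvarez-C\'onsul and Garc\'ia-Prada \cite{AlvarezConsulGarciaPradaHKCorrespondenceQuiversVortices}, so there is no internal proof to compare against. Your outline is nonetheless an accurate description of the strategy used in that reference: Chern--Weil plus the second fundamental form for the ``vortex $\Rightarrow$ polystable'' direction, and a Donaldson-type functional with an Uhlenbeck--Yau coercivity estimate for the converse. Two caveats. First, in the degree comparison the block that vanishes by $\varphi$-invariance of $\mathcal{F}$ is $(\mathrm{Id}-\pi_{h\alpha})\varphi_\alpha(\mathrm{Id}\otimes\pi_{t\alpha})$, but the term that actually survives in $\sum_i\mathrm{tr}(\pi_i[\varphi,\varphi^*]_i)$ and drives the slope inequality is the \emph{other} off-diagonal block $\pi_{h\alpha}\varphi_\alpha(\mathrm{Id}\otimes(\mathrm{Id}-\pi_{t\alpha}))$, whose squared $L^2$-norm enters with the favourable sign; as written, your sentence suggests the whole $[\varphi,\varphi^*]$ contribution vanishes, which would lose the equality analysis you need to conclude that a slope-maximising sub-bundle splits off and hence to deduce polystability. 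Second, the ``main estimate'' is essentially the entire analytic content of the theorem: naming the Uhlenbeck--Yau contradiction argument is the right plan but not a proof, and in the quiver setting one must additionally verify that the weakly holomorphic destabilising subsheaves produced at each vertex are simultaneously preserved by all the $\varphi_\alpha$ and saturate to a genuine quiver sub-bundle (automatic over a Riemann surface, which is the case the paper restricts to, but requiring care in higher dimension). With those points understood, your sketch matches the published proof in structure.
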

	\section{Applications}\label{applicationsSection}
	\subsection{Polystability of tensor products of quiver bundles}
	\begin{definition}
		Let $\mathcal{E}'=((\mathcal{E}_i')_{i\in Q_0'},(\varphi_{\alpha})_{\alpha\in Q_1'})$ be a $M$-twisted $Q'$-bundle and $\mathcal{E}''=((\mathcal{E}_j'')_{j\in Q_0''},$ $(\psi_{\beta})_{\beta\in Q_1''})$ be a $N$-twisted $Q''$-bundle. The \emph{tensor product} $Q'\otimes Q''$-bundle is the twisted quiver bundle denoted by $\mathcal{E}'\otimes \mathcal{E}''$ and given by the following data:
		\begin{itemize}
			\item To every vertex $(i,j)\in (Q'\otimes Q'')_0$ we assign the holomorphic vector bundle $\mathcal{E}_i'\otimes \mathcal{E}_j''$.
			\item To every edge $(\alpha,j)\in Q_0'\times Q_1''$ we assign the morphism $\delta_{(\alpha,j)}:=\varphi_{\alpha}\otimes \text{Id}:M_\alpha\otimes \mathcal{E}_{t\alpha}'\otimes \mathcal{E}_j''\to \mathcal{E}_{h\alpha}'\otimes \mathcal{E}_j''$ and to every edge $(i,\beta)$ we assign the morphism $\delta_{(i,\beta)}:=\text{Id}\otimes \psi_\beta:\mathcal{E}_i'\otimes N_\beta\otimes  \mathcal{E}_{t\alpha}''\to \mathcal{E}_i'\otimes \mathcal{E}_{h\alpha}''$.
		\end{itemize}
	\end{definition}
	\begin{remark}
		Since there is an equivalence of categories between locally free $\mathcal{O}_X$-modules and holomorphic vector bundles over $X$, the tensor product defined above is just an instance of a tensor product of twisted representations as defined in Section \ref{tensorProductsTwistedRepresentations}.
	\end{remark}
	\begin{example}\label{extendedHomQuiverExample}
		Let  $\mathcal{E}'=((\mathcal{E}_i')_{i\in Q_0},(\varphi_{\alpha})_{\alpha\in Q_1})$ and $\mathcal{E}''=((\mathcal{E}_j'')_{j\in Q_0},$ $(\psi_{\beta})_{\beta\in Q_1})$ be $Q$-bundles without twisting. We define the \emph{dual quiver bundle} of $\mathcal{E}'$ to be the $Q^{op}$-bundle given by $(\mathcal{E}')^*=((\mathcal{E}_i'^*)_{i\in Q_0},(-\varphi_\alpha^*)_{\alpha\in Q_1})$ with $Q^{op}$ the quiver introduced in Example \ref{oppositeQuiverExample}.  Here $\varphi_\alpha^*:\mathcal{E}_{h\alpha}'^*\to \mathcal{E}_{t\alpha}'^*$  is the morphism induced by $\varphi_\alpha:\mathcal{E}'_{t\alpha}\to \mathcal{E}'_{h\alpha}$ by duality and should not be confused with the adjoint morphism with respect to a choice of hermitian metrics.
		\\
		\\
		The $Q^{op}\otimes Q$-bundle $(\mathcal{E}')^*\otimes \mathcal{E}''$ is the so-called \emph{extended} \text{Hom}-\emph{quiver bundle} and was introduced by Gothen and Nozad in the context of holomorphic chains ($Q$-bundles with $Q$ of type $A_m$) over a compact Riemann surface $X$ \cite[Definition 3.1]{GothenNozad}. Another instance of this quiver bundle, on a different guise though, first appeared on the work of Bradlow, García-Prada and Gothen on holomorphic triples ($Q$-bundles with $Q$ of type $A_2$) \cite[Section 4.2]{BGGTriplesPaper}.
		\\
		\\
		More concretely, the quiver bundle
		\begin{center}
			\begin{tikzcd}[scale cd=0.75, column sep =large, row sep =large]
				& & \text{Hom}(\mathcal{E}_1,\mathcal{E}_1) \arrow[dr,,"-\varphi_2^*\otimes \text{Id}"]&&  \\
				&\text{Hom}(\mathcal{E}_1,\mathcal{E}_2)\arrow[ur,"\text{Id}\otimes \varphi_2"] \arrow[dr,"-\varphi_2^*\otimes \text{Id}"]& &\text{Hom}(\mathcal{E}_2,\mathcal{E}_1) \arrow[dr,"-\varphi_3^*\otimes \text{Id}"]&   \\
				\text{Hom}(\mathcal{E}_1,\mathcal{E}_3) \arrow[ur,"\text{Id}\otimes \varphi_3"] \arrow[dr,"-\varphi_2^*\otimes \text{Id}",']&  & 	\text{Hom}(\mathcal{E}_2,\mathcal{E}_2) \arrow[ur,"\text{Id}\otimes \varphi_2",'] \arrow[dr,"-\varphi_3^*\otimes \text{Id}",']&  & \text{Hom}(\mathcal{E}_3,\mathcal{E}_1)\\
				&\text{Hom}(\mathcal{E}_2,\mathcal{E}_3)\arrow[ur,"\text{Id}\otimes \varphi_3"]  \arrow[dr,"-\varphi_3^*\otimes \text{Id}",']& &\text{Hom}(\mathcal{E}_3,\mathcal{E}_2) \arrow[ur,"\text{Id}\otimes \varphi_2",']&  \\
				& & \text{Hom}(\mathcal{E}_3,\mathcal{E}_3) \arrow[ur,"\text{Id}\otimes \varphi_3",'] &&
			\end{tikzcd}.
		\end{center}
		is, for instance, the extended Hom-quiver bundle, $\mathcal{E}^*\otimes \mathcal{E}$, for the holomorphic chain 
		\begin{center}
			\begin{tikzcd}[scale cd=0.9, column sep =large, row sep =large]
				\mathcal{E}:\ \mathcal{E}_3 \arrow[r,"\varphi_3"]& \mathcal{E}_2 \arrow[r,"\varphi_2"] &\mathcal{E}_1.
			\end{tikzcd}
		\end{center}
	\end{example}
	\begin{theorem}\label{polystabilityTensorProductQuiverBundles}
		Let $\mathcal{E}',\mathcal{E}''$ be $(\sigma',\theta')$ and $(\sigma'',\theta'')$-polystable twisted quiver bundles respectively. Then, $\mathcal{E}'\otimes \mathcal{E}''$ is $(\sigma,\theta)$-polystable for $\sigma=(\sigma_i'\sigma_j'')_{(i,j)\in (Q'\otimes Q'')_0}$ and $\theta=(\theta_i'\sigma_j''+\theta_j''\sigma_j')_{(i,j)\in (Q'\otimes Q'')_0}$.
	\end{theorem}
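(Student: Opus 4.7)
The plan is analytic, using the Hitchin--Kobayashi correspondence for twisted quiver bundles in both directions. By polystability of the factors, I first obtain Hermitian metrics $H' = (H_i')_{i \in Q_0'}$ on $\mathcal{E}'$ and $H'' = (H_j'')_{j \in Q_0''}$ on $\mathcal{E}''$ solving the $(\sigma',\theta')$- and $(\sigma'',\theta'')$-quiver vortex equations respectively. It then suffices to endow $\mathcal{E}' \otimes \mathcal{E}''$ with a Hermitian structure satisfying the $(\sigma,\theta)$-vortex equations for the claimed parameters, after which HK applied in reverse delivers $(\sigma,\theta)$-polystability. (The expression $\theta_j''\sigma_j'$ in the statement appears to be a typo for $\theta_j''\sigma_i'$, as $\sigma'$ is indexed by $Q_0'$; I work with the latter.)

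On each vertex bundle $\mathcal{E}_i' \otimes \mathcal{E}_j''$ I would use the tensor product metric $H_i' \otimes H_j''$, and on the twisting bundles of the tensor quiver I would rescale the given metrics: equip $M_\alpha$ with $(\sigma_j'')^{-1} q_\alpha$ at edge $(\alpha, j)$, and symmetrically $N_\beta$ with $(\sigma_i')^{-1} r_\beta$ at edge $(i, \beta)$. This is legitimate because $(\sigma,\theta)$-polystability is a purely holomorphic notion, independent of the Hermitian metrics chosen on the twisting bundles, while the Hitchin--Kobayashi theorem applies for any such choice. Two standard identities then do the heavy lifting. First, the Chern curvature of a tensor product Hermitian metric decomposes additively as $F_{(i,j)} = F_i' \otimes \text{Id}_{\mathcal{E}_j''} + \text{Id}_{\mathcal{E}_i'} \otimes F_j''$. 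Second, rescaling the metric on the twisting bundle $M_\alpha$ by $c$ scales the adjoint $\varphi_\alpha^*$ by $c^{-1}$; with $c = (\sigma_j'')^{-1}$ this gives $\delta_{(\alpha,j)}^* = \sigma_j''(\varphi_\alpha^* \otimes \text{Id})$ and symmetrically for $\delta_{(i,\beta)}^*$. Collecting head- and tail-contributions at vertex $(i,j)$ of the tensor quiver then yields $[\delta, \delta^*]_{(i,j)} = \sigma_j''\,[\varphi,\varphi^*]_i \otimes \text{Id} + \sigma_i'\,\text{Id} \otimes [\psi,\psi^*]_j$.

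The conclusion is then a one-line linear combination: multiply the vortex equation for $\mathcal{E}'$ at vertex $i$ by $\sigma_j''$ and tensor with $\text{Id}_{\mathcal{E}_j''}$, do the symmetric operation for $\mathcal{E}''$ at $j$ with factor $\sigma_i'$, and add. The curvature terms assemble into $\sigma_i'\sigma_j''\sqrt{-1}\Lambda F_{(i,j)}$, the commutator terms into $[\delta,\delta^*]_{(i,j)}$ as above, and the right-hand sides into $(\theta_i'\sigma_j'' + \theta_j''\sigma_i')\text{Id}$, which is precisely $\theta_{(i,j)}\text{Id}$. I expect the main subtlety to be exactly this choice of twisting metrics: with the naive unscaled $q_\alpha$ and $r_\beta$, the commutator loses its $\sigma$-weights and residual curvature terms of the form $\sigma_i'(\sigma_j''-1)\sqrt{-1}\Lambda F_i' \otimes \text{Id}$ (and symmetric in the $j$ index) survive, obstructing the equation. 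Once the scaled calculation is verified, HK applied in reverse completes the proof.
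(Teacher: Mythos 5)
Your proposal follows the same overall strategy as the paper: both directions of the Hitchin--Kobayashi correspondence, the product metric $H_i'\otimes H_j''$ on each vertex bundle, and the additive decomposition of the Chern curvature of a tensor product. But your treatment of the commutator term is genuinely more careful than the paper's, and the extra care is in fact needed. The paper's displayed computations give the unweighted identity $[\delta,\delta^*]_{(i,j)}=[\varphi,\varphi^*]_i\otimes \text{Id}+\text{Id}\otimes[\psi,\psi^*]_j$ (adjoints taken with respect to the unscaled product metrics), yet the final ``rearranging terms'' step silently uses $[\delta,\delta^*]_{(i,j)}=\sigma_j''\,[\varphi,\varphi^*]_i\otimes \text{Id}+\sigma_i'\,\text{Id}\otimes[\psi,\psi^*]_j$; without those weights one is left with residual terms $(1-\sigma_j'')[\varphi,\varphi^*]_i\otimes\text{Id}+(1-\sigma_i')\text{Id}\otimes[\psi,\psi^*]_j$, so the argument as printed only closes when $\sigma_i'=\sigma_j''=1$. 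Your rescaling of the twisting metrics, $q_\alpha\mapsto(\sigma_j'')^{-1}q_\alpha$ at edge $(\alpha,j)$ and symmetrically at $(i,\beta)$, supplies exactly the missing weights, and it is legitimate because the $(\sigma,\theta)$-polystability side of the correspondence is metric-independent while the vortex equations and the correspondence itself hold for any fixed choice of metrics on the twisting bundles. Your sign/index bookkeeping is consistent ($c=(\sigma_j'')^{-1}$ scales the adjoint by $c^{-1}=\sigma_j''$), and you are also right that $\theta_j''\sigma_j'$ in the statement should read $\theta_j''\sigma_i'$, as the proof's own final line confirms. In short: same route, but your version repairs a step the paper elides, and the repair (or some equivalent device) should be recorded explicitly.
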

	\begin{proof}
		For the sake of readability we give a proof of this theorem for the untwisted case. The proof for the twisted counterpart follows the same steps just taking into account that the smooth adjoints of the morphisms $\varphi_{\alpha}:M_\alpha\otimes \mathcal{E}_{t\alpha}\to \mathcal{E}_{h\alpha}$ should be computed with respect to the hermitian metrics $H_{t\alpha}q_\alpha$ ($q_\alpha$ being the already fixed hermitian metric on the twisting vector bundle $M_\alpha$) and $H_{h\alpha}$ on the bundles $M_\alpha\otimes\mathcal{E}_{t\alpha}\to X$ and $\mathcal{E}_{h\alpha}\to X$ respectively.  
		\\
		\\
		First, let us remark that at each vertex $(i,j)\in (Q'\otimes Q'')_0$ the tensor quiver bundle $\mathcal{E}'\otimes \mathcal{E}''$ takes the form:
		\begin{center}
			\begin{tikzcd}[scale cd=0.8, column sep =normal, row sep =normal]		
				& \mathcal{E}_i'\otimes \mathcal{E}''_{h\nu_1}& \cdots&\mathcal{E}_i'\otimes \mathcal{E}''_{h\nu_m}&  \\
				\mathcal{E}_{t\alpha_1}'\otimes \mathcal{E}_j''\arrow[to=3-3,"\varphi_{\alpha_1}\otimes \text{Id}",']&& && \mathcal{E}_{h\gamma_1}'\otimes \mathcal{E}_j''  \\
				\vdots &  & \mathcal{E}_i'\otimes \mathcal{E}_j'' \arrow[to=1-2,"\text{Id}\otimes \psi_{\nu_1}"]\arrow[to=1-4,"\text{Id}\otimes \psi_{\nu_m}"]\arrow[to=2-5,"\varphi_{\gamma_1}\otimes \text{Id}"]\arrow[to=4-5,"\varphi_{\gamma_n}\otimes \text{Id}"]&  &\vdots\\
				\mathcal{E}_{t\alpha_k}'\otimes \mathcal{E}_j''\arrow[to=3-3,"\varphi_{\alpha_k}\otimes \text{Id}",']&& &&   \mathcal{E}_{h\gamma_n}'\otimes \mathcal{E}_j''\\
				&\mathcal{E}_i'\otimes \mathcal{E}''_{t\beta_1}\arrow[to=3-3,"\text{Id}\otimes \psi_{\beta_1}",']& \cdots &\mathcal{E}_i'\otimes \mathcal{E}''_{t\beta_l}\arrow[to=3-3,"\text{Id}\otimes \psi_{\beta_l}",']&
			\end{tikzcd}.
		\end{center}
		Here $\alpha_1,\ldots,\alpha_k,\gamma_1,\ldots,\gamma_n\in Q'_1$, $\beta_1,\ldots,\beta_l,\nu_1,\ldots,\nu_m\in Q_1''$ are such that $h\alpha_1=\cdots=h\alpha_k=t\gamma_1=\cdots=t\gamma_n=i$ and  $h\beta_1=\cdots=h\beta_l=t\nu_1=\cdots=t\nu_m=j$.
		\\
		\\
		Since $\mathcal{E}'$ and $\mathcal{E}''$ are polystable, there exist hermitian metrics $H_i'$ and $H_j''$ on $\mathcal{E}_i'\to X$ and $\mathcal{E}_j''\to X$ respectively such that the corresponding quiver vortex equations hold. We have a natural induced hermitian metric on $\mathcal{E}_i'\otimes \mathcal{E}_j''\to X$ given by the product of the hermitian metrics $H_i'$ and $H_j''$. The corresponding Chern connection is given by 
		$$
		\nabla_{\mathcal{E}_i'\otimes \mathcal{E}_j''}=\nabla_{\mathcal{E}_i'}\otimes \text{Id} + \text{Id}\otimes \nabla_{\mathcal{E}_j''},
		$$
		where $\nabla_{\mathcal{E}_i'}$ and $\nabla_{\mathcal{E}_j''}$ are the Chern connections on $\mathcal{E}_i'$ and $\mathcal{E}_j''$ respectively, and the curvature of this connection  is therefore given by 
		$$
		F_{\mathcal{E}_i'\otimes \mathcal{E}_j''}=F_{\mathcal{E}_i'}\otimes \text{Id}+\text{Id}\otimes F_{\mathcal{E}_j''}.
		$$
		The adjoint morphism, $(\varphi_{\alpha_1}\otimes \text{Id})^*:\mathcal{E}_i'\otimes \mathcal{E}_j''\to \mathcal{E}_{t\alpha_1}'\otimes \mathcal{E}_j''$ , to $\varphi_{\alpha_1}\otimes \text{Id}$ is $\varphi_{\alpha_1}^*\otimes \text{Id}$ and similarly for the others.  With this in mind we now write the quiver vortex equations for the tensor quiver bundle $\mathcal{E}'\otimes \mathcal{E}''$. 
		\\
		\\
		Let $s_i',s_j''$ be smooth sections of the vector bundles $\mathcal{E}_i'\to X$ and $\mathcal{E}_j''\to X$ respectively.  Then, 
		\begin{align*}
			\sqrt{-1}\Lambda F_{\mathcal{E}_i'\otimes \mathcal{E}_j''}(s_i'\otimes s_j'')=\sqrt{-1}\Lambda F_{\mathcal{E}_i'}s_i'\otimes s_j'' +s_i'\otimes \sqrt{-1}\Lambda F_{\mathcal{E}_j''}s_j''.
		\end{align*}
		On the other hand, 
		$$
		\bigg (\sum_{\substack{(i,j)=h(\alpha,j) \\ (i,j)=h(i,\beta)}}\delta_{(\alpha,j)}\delta_{(\alpha,j)}^*+\delta_{(i,\beta)}\delta_{(i,\beta)}^*\bigg)(s_i'\otimes s_j'')=\sum_{h\alpha=i}\varphi_\alpha\varphi_{\alpha}^*(s_i')\otimes s_j''+s_i'\otimes\sum_{h\beta=j}\psi_\beta\psi_\beta^*(s_j'')
		$$
		and 
		$$
		\bigg(\sum_{\substack{(i,j)=t(\alpha,j)\\(i,j)=t(i,\beta)}}\delta_{(\alpha,j)}^*\delta_{(\alpha,j)}+\delta^*_{(i,\beta)}\delta_{(i,\beta)}\bigg)(s_i'\otimes s_j'')=\sum_{t\alpha=i}\varphi_\alpha^*\varphi_\alpha(s_i')\otimes s_j''+s_i'\otimes\sum_{t\beta=j}\psi_\beta^*\psi_\beta(s_j'').
		$$
		Thus, by combining these equalities, rearranging terms and using the fact that the quiver bundles $\mathcal{E}'$ and $\mathcal{E}''$ satisfy the quiver vortex equations we obtain
		\begin{align*}
			&(\sigma_i'\sigma_j''\sqrt{-1}\Lambda F_{\mathcal{E}_i'\otimes \mathcal{E}_j''}+[\delta,\delta^*]_{(i,j)} )(s_i'\otimes s_j'')\\	
			&= (\sigma_i'\sqrt{-1}\Lambda F_{\mathcal{E}_i'}+[\varphi,\varphi^*]_i)(s_i')\otimes \sigma_j''s_j'' +\sigma_i's_i'\otimes (\sigma_j''\sqrt{-1}\Lambda F_{\mathcal{E}_j''}+[\psi,\psi^*]_j)(s_j'')\\
			&= (\theta_i'\sigma_j''+\sigma_i'\theta_j'')(s_i'\otimes s_j'')
		\end{align*}
		and the desired conclusion follows. 
	\end{proof}
	
	We have pointed out several times that when $X=\text{Spec}(\mathbb{C})$, $Q$-bundles are just quiver representations in the category of finite dimensional vector spaces over $\mathbb{C}$. In this setting, we have no curvature term so the quiver vortex equations are precisely those described in Equation (\ref{quiverVortexEquationsForAPoint}). The following is an easy corollary from our theorem. We have been aware that Das et al. have arrived to the same result in a recent work \cite[Lemma 2.8]{DasEtAlTensors}.
	
	\begin{corollary}\label{polystabilityTensorProductQuiverRepresentations}
		Let $\varphi=(\varphi_\alpha)_{\alpha\in Q'}$ and $\psi=(\psi_\beta)_{\beta\in Q''}$ be $\theta'$, $\theta''$-polystable quiver representations respectively. Then, their tensor product {\normalfont $\varphi\otimes \psi:=(\varphi_\alpha\otimes \text{Id})_{(\alpha,j)}\sqcup(\text{Id}\otimes \psi_\beta)_{(i,\beta)}$}, which is a representation of the tensor quiver $Q'\otimes Q''$, is a $\theta:=(\theta_i'+\theta_j'')_{(i,j)\in (Q'\otimes Q'')_0}$-polystable quiver representation. 
	\end{corollary}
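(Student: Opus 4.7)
The plan is to deduce the corollary directly from Theorem \ref{polystabilityTensorProductQuiverBundles} by specialising to the case $X = \text{Spec}(\mathbb{C})$. Recall from Section \ref{theLinearCaseSubsection} that untwisted $Q$-representations of dimension vector $d$ are precisely $Q$-bundles over a point, and that by Theorem \ref{quiverVortexEqnsLinearCase} such a representation is $\theta$-polystable (in the GIT sense) if and only if one can choose hermitian inner products on each $V_i$ satisfying
$$\sum_{h\alpha = i}\varphi_\alpha \varphi_\alpha^* - \sum_{t\alpha = i}\varphi_\alpha^* \varphi_\alpha = \theta_i\, \text{Id}_{V_i}, \qquad i \in Q_0.$$
These are exactly the quiver vortex equations of Definition \ref{definitionQuiverVortexEquations} with the choice $\sigma_i = 1$ for every $i$, with the curvature term $\sqrt{-1}\Lambda F_i$ absent since there are no nontrivial connections over a point.

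With this in mind, I would apply Theorem \ref{polystabilityTensorProductQuiverBundles} with $\sigma' = (1)_{i \in Q'_0}$ and $\sigma'' = (1)_{j \in Q''_0}$. Under these choices, the $\theta'$- and $\theta''$-polystability of $\varphi$ and $\psi$ translate into $(\sigma', \theta')$- and $(\sigma'', \theta'')$-polystability in the sense of Definition \ref{defStableQuiverBundle}, viewed over the point $X = \text{Spec}(\mathbb{C})$. The theorem then asserts that $\varphi \otimes \psi$ is $(\sigma, \theta)$-polystable, where
$$\sigma = (\sigma'_i \sigma''_j)_{(i,j)} = (1)_{(i,j)}, \qquad \theta = (\theta'_i \sigma''_j + \theta''_j \sigma'_i)_{(i,j)} = (\theta'_i + \theta''_j)_{(i,j)},$$
which is exactly the statement of the corollary.

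There is essentially no obstacle here, since the corollary is just the specialisation of Theorem \ref{polystabilityTensorProductQuiverBundles} to a point. If one preferred a self-contained argument that avoids invoking the twisted vector bundle statement, one could reproduce the computation in the proof of that theorem verbatim in the linear setting: choose hermitian inner products on each $V_i$ and $W_j$ solving the corresponding King equations, equip each $V_i \otimes W_j$ with the product inner product, and verify that the resulting inner products solve the King equation at the vertex $(i,j)$ for the tensor representation with parameter $\theta'_i + \theta''_j$. The key identity is that the adjoint of $\varphi_\alpha \otimes \text{Id}$ is $\varphi_\alpha^* \otimes \text{Id}$ (and similarly for $\text{Id} \otimes \psi_\beta$), so the commutator $[\varphi \otimes \psi, (\varphi \otimes \psi)^*]_{(i,j)}$ splits as a sum of a term coming only from $\varphi$ acting on $V_i$ tensored with $\text{Id}_{W_j}$ and a term coming only from $\psi$ acting on $W_j$ tensored with $\text{Id}_{V_i}$; substituting the King equations for $\varphi$ and $\psi$ then yields $(\theta'_i + \theta''_j)\text{Id}_{V_i \otimes W_j}$, as required. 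Either route gives the desired conclusion with minimal bookkeeping.
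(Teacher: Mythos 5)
Your proposal is correct and is essentially the paper's own argument: the corollary is obtained by specialising Theorem \ref{polystabilityTensorProductQuiverBundles} to $X=\text{Spec}(\mathbb{C})$ with $\sigma'=\sigma''=(1,\ldots,1)$, where the curvature term drops out, the vortex equations reduce to Equation (\ref{quiverVortexEquationsForAPoint}), and Theorem \ref{quiverVortexEqnsLinearCase} converts solvability of those equations back into $\theta$-polystability.
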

	\subsection{Interlude: Collapsing and expanding quiver bundles}\label{operationsOnQuiversAndStability}
	We can produce, given a fixed quiver $Q$, a new one, say $Q'$, by performing one of the following operations: \emph{deleting, collapsing and clonning vertices or edges}. For instance, Figure \ref{collapsingVertices} and Figure \ref{collapsingEdges} show how do vertex and edge collapsing work respectively. 
	\begin{remark}
		When deleting (cloning) a vertex, all the edges connected to this should also be deleted (cloned). 
	\end{remark}
	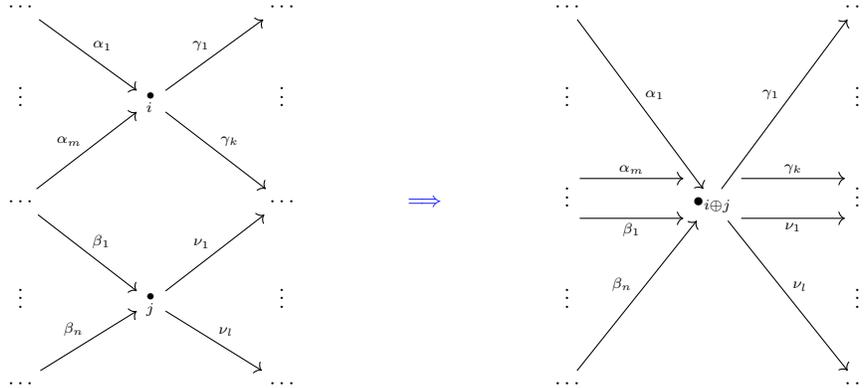
\begin{figure}[H]
		\begin{center}
			\begin{tikzcd}[scale cd=0.8, column sep =large, row sep =normal]
				\cdots\arrow[to=2-2,"\alpha_1"]&&\cdots&&\cdots \arrow[to=3-6,"\alpha_1"]&&  \cdots\\
				\vdots&{}^\bullet_i\arrow[to=1-3,"\gamma_1"]\arrow[to=3-3,"\gamma_k"]&\vdots&&\vdots&&\vdots  \\
				\cdots\arrow[to=2-2,"\alpha_m"]\arrow[to=4-2,"\beta_1"]&&\cdots&\textcolor{blue}{\Longrightarrow}&\vdots\arrow[r,shift left=2ex,"\alpha_m"]\arrow[r, shift right=1.5ex,"\beta_1",']&\bullet_{i\oplus j}\arrow[to=1-7,"\gamma_1"]\arrow[to=5-7,"\nu_l"]\arrow[r,shift left=2ex,"\gamma_k"]\arrow[r, shift right=1.5ex,"\nu_1",']&\vdots \\
				\vdots&{}^\bullet_j\arrow[to=3-3,"\nu_1"]\arrow[to=5-3,"\nu_l"]&\vdots&&\vdots&&\vdots \\
				\cdots \arrow[to=4-2,"\beta_n"]&&\cdots &&\cdots \arrow[to=3-6,"\beta_n"]&& \cdots 
			\end{tikzcd}
		\end{center}
		\caption{\label{collapsingVertices} Collapsing vertices}
	\end{figure}
	Now, we briefly discuss the effect that the aforementioned operations have on the corresponding path algebras. Following the notation from Section \ref{representationTheoryOfTensorQuivers}, we let $\{\mathscr{M}_\alpha\}_{\alpha\in Q_1}$ be a collection of twisting $\mathcal{O}_X$-modules and we note that when performing a deleting or collapsing operation we have an injection of path algebras $\mathcal{T}_{\mathscr{M}'}\mathcal{A}_{Q'}\hookrightarrow \mathcal{T}_\mathscr{M}\mathcal{A}_Q$
	for $\mathscr{M}=\bigoplus_{\alpha\in Q_1}\mathscr{M}_\alpha$. It is worth noting that if $Q'$ is obtained from collapsing the edges $\alpha_1,\ldots,\alpha_k$, then we set the twisting $\mathcal{O}_X$-module $\mathscr{M}'_{\alpha_1+\cdots+\alpha_k}$ to be the direct sum $\mathscr{M}_{\alpha_1}\oplus\cdots\oplus\mathscr{M}_{\alpha_k}$. When cloning either a vertex or an edge, there is a canonical choice of twisting $\mathcal{O}_X$-modules for each of the cloned edges and contrary to what happened previously, there will be a surjection of path algebras $\mathcal{T}_{\mathscr{M}'}\mathcal{A}_{Q'}\twoheadrightarrow \mathcal{T}_{\mathscr{M}}\mathcal{A}_Q$.
	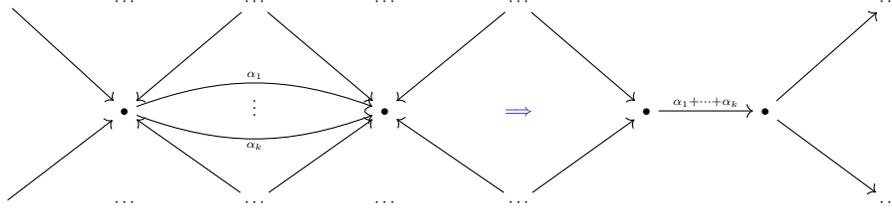
\begin{figure}[H]
		\begin{center}
			\begin{tikzcd}[scale cd=0.65, column sep =large, row sep =large]
				\arrow[to=2-2]&\cdots&\cdots\arrow[to=2-2]\arrow[to=2-4]&\cdots&\cdots\arrow[to=2-4]\arrow[to=2-6]&
				&&\cdots\\
				&\bullet \arrow[to=2-4,',bend left=20,"\alpha_1"']\arrow[to=2-4,',bend right=20,"\alpha_k"]&\vdots&\bullet&\textcolor{blue}{\Longrightarrow}&\bullet\arrow[r,"\alpha_1+\cdots+\alpha_k"]&\bullet\arrow[to=3-8]\arrow[to=1-8]&\\
				\arrow[to=2-2]&\cdots&\cdots\arrow[to=2-2]\arrow[to=2-4]&\cdots&\cdots\arrow[to=2-4]\arrow[to=2-6]&&&\cdots
			\end{tikzcd}
		\end{center}
		\caption{\label{collapsingEdges} Collapsing edges}
	\end{figure}
	From the preceeding discussion it follows that:
	\begin{proposition}\label{restrictionOfScalarsAndOperationsOnQuivers}
		Restriction of scalars induced by the morphisms $\mathcal{T}_{\mathscr{M}'}\mathcal{A}_{Q'}\to \mathcal{T}_{\mathscr{M}}\mathcal{A}_Q$ described above gives a functor {\normalfont $\mathcal{T}_\mathscr{M}\mathcal{A}_Q\text{-mod}\to \mathcal{T}_{\mathscr{M}'}\mathcal{A}_{Q'}\text{-mod}$}. 
		
	\end{proposition}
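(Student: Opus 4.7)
The plan is to reduce everything to the standard restriction-of-scalars construction that was already spelled out in the proof of Proposition \ref{fullEmbeddingPathAlgebras}; indeed, Remark \ref{remarkFullEmbeddingQuotients} already observes that this construction produces a module functor for any morphism of sheaves of $\mathcal{O}_X$-algebras, not just for quotient maps. Since the present proposition does not assert fullness, there is nothing more to do once we identify the algebra morphisms precisely.

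First I would make the morphism $\phi:\mathcal{T}_{\mathscr{M}'}\mathcal{A}_{Q'}\to \mathcal{T}_\mathscr{M}\mathcal{A}_Q$ explicit for each elementary operation. For vertex or edge deletion, the vertices and edges of $Q'$ inject into those of $Q$ with matching twisting data, so the tautological assignment on generators extends, by the universal property of the tensor algebra, to a morphism of sheaves of $\mathcal{O}_X$-algebras. Collapsing is only slightly more delicate: when collapsing edges $\alpha_1,\ldots,\alpha_k$ into a single edge $\alpha_1+\cdots+\alpha_k$ with twisting $\mathscr{M}'_{\alpha_1+\cdots+\alpha_k}=\mathscr{M}_{\alpha_1}\oplus\cdots\oplus\mathscr{M}_{\alpha_k}$, I would define $\phi$ on each summand $\mathscr{M}_{\alpha_i}$ as the tautological inclusion into $\mathcal{T}_\mathscr{M}\mathcal{A}_Q$, and analogously for vertex collapsing at the level of the commutative base algebra $\mathcal{B}$. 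For the cloning operations, where the direction of the algebra morphism is reversed (giving a surjection $\mathcal{T}_{\mathscr{M}'}\mathcal{A}_{Q'}\twoheadrightarrow\mathcal{T}_\mathscr{M}\mathcal{A}_Q$), the map sends each clone of an edge to the unique edge it clones, which is compatible with the product because the canonical choice of twisting modules on the clones coincides with that on the edge being cloned. In each case compatibility of $\phi$ with the product $m_p\cdot m_q$ reduces to checking composability of the image paths, which is immediate from the combinatorics.

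With $\phi$ in hand, the functor $\phi^\ast:\mathcal{T}_\mathscr{M}\mathcal{A}_Q\text{-mod}\to \mathcal{T}_{\mathscr{M}'}\mathcal{A}_{Q'}\text{-mod}$ is defined exactly as in Proposition \ref{fullEmbeddingPathAlgebras}: a module $(\mathcal{M},\mu)$ becomes a $\mathcal{T}_{\mathscr{M}'}\mathcal{A}_{Q'}$-module via the sheaf morphism associated to the presheaf composition
\[
\mathcal{T}_{\mathscr{M}'}\mathcal{A}_{Q'}\otimes_{\mathcal{O}_X}\mathcal{M}\xrightarrow{\phi\otimes \text{Id}}\mathcal{T}_{\mathscr{M}}\mathcal{A}_{Q}\otimes_{\mathcal{O}_X}\mathcal{M}\xrightarrow{\mu}\mathcal{M},
\]
and a $\mathcal{T}_\mathscr{M}\mathcal{A}_Q$-linear morphism $f:\mathcal{M}\to\mathcal{N}$ is sent to itself, now $\mathcal{T}_{\mathscr{M}'}\mathcal{A}_{Q'}$-linear by precomposition. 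Preservation of identities and compositions is tautological.

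The main obstacle, as is common in this sheaf-theoretic setting, is not conceptual but bookkeeping: one has to keep track of the twisting $\mathcal{O}_X$-modules along each elementary operation and verify that the prescribed assignment on generators really descends to a well-defined morphism of the sheafified twisted path algebras. Since each operation acts only on finitely many vertices or edges and prescribes the twisting data in a canonical way, this verification is entirely local and routine, so I do not anticipate any genuine difficulty.
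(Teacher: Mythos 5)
Your proposal is correct and follows essentially the same route as the paper, which gives no separate proof but simply appeals to the algebra morphisms described in the preceding discussion together with the restriction-of-scalars construction of Proposition \ref{fullEmbeddingPathAlgebras} and Remark \ref{remarkFullEmbeddingQuotients}. Your write-up merely makes explicit the generator-level definition of each morphism and the (correct) direction of the induced functor, which the paper leaves implicit.
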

	\begin{example}
		Following Example \ref{exampleHiggsBundles}, for $i=1,2$, let $(\mathcal{E}_i,\phi_i:\mathcal{E}_i\otimes K_X^*\to \mathcal{E}_i)$ be Higgs bundles over $X$. The quiver-theoretic tensor product of these two Higgs bundles is a twisted representation of the quiver $Q$, given by the tensor product of the Jordan quiver (see Figure \ref{JordanQuiver}) with itself, depicted in Figure \ref{eightQuiver}. In this case, the twisting $\mathcal{O}_X$-module, for each edge of $Q$, will be $K_X^*$.  If we collapse the edges $\alpha,\beta\in Q_1$, then the twisted representation of the quiver $Q'=Q_J$ obtained by restriction of scalars will be the Higgs bundle $(\mathcal{E}_1\otimes \mathcal{E}_2,\phi_1\otimes \text{Id}+\text{Id}\otimes \phi_2)$. This is, in fact, how the tensor product is defined in the category of Higgs bundles over $X$ (see, for instance, \cite[p.14]{Simpson}). 
	\end{example}
	\vspace{-7mm}
	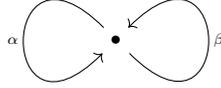
\begin{figure}[H]
		\begin{center}
			\begin{tikzcd}[scale cd=0.8, column sep =normal, row sep =normal]
				\bullet \arrow[loop,in=45,out=315,distance= 20mm,"\beta",']  \arrow[loop,in=225,out=135,distance= 20mm,"\alpha",']
			\end{tikzcd}
		\end{center}
		\vspace{-7mm}
		\caption{\label{eightQuiver} The tensor quiver $Q_J\otimes Q_J$}
	\end{figure}
	Using the same approach as in the proof of Theorem \ref{polystabilityTensorProductQuiverBundles}, we can show:
	\begin{proposition}\label{effectOfOperationsInPolystability}
		Let $\mathcal{E}=((\mathcal{E}_i)_{i\in Q_0},(\varphi_\alpha)_{\alpha\in Q_1})$ be a $(\sigma,\theta)$-polystable twisted Q-bundle. The following claims hold:
		\begin{enumerate}
			\item If $\mathcal{E}'$ is the twisted $Q'$-bundle obtained from Proposition \ref{restrictionOfScalarsAndOperationsOnQuivers} after collapsing any number of vertices of $Q$ for which the corresponding component of the stability parameter is equal, then $\mathcal{E}'$ is $(\sigma',\theta')$-polystable for $\sigma'=(\sigma_i)_{i\in Q_0'}$ and $\theta'=(\theta_i)_{i\in Q'_0}$.
			\item If $\mathcal{E}'$ is the twisted $Q'$-bundle obtained from Proposition \ref{restrictionOfScalarsAndOperationsOnQuivers} after collapsing the edges $\alpha_1,\ldots,\alpha_k\in Q_1$ and if {\normalfont
				$$\sum_{n,m=1 }^k\varphi_{\alpha_n}\varphi_{\alpha_m}^*=\tau\text{Id}_{\mathcal{E}_{h\alpha_n}}, \sum_{n,m=1}^k\varphi_{\alpha_n}^*\varphi_{\alpha_m}=\tau\text{Id}_{\mathcal{E}_{t\alpha_n}},$$} 
			for some $\tau\in \mathbb{R}$, then $\mathcal{E}'$ is $(\sigma,\theta')$-polystable, for 
			$$
			\theta'_i=\begin{cases}
				\theta_i-\tau & \text{if }i=h\alpha_1=\cdots=h\alpha_k,\\
				\theta_i+\tau & \text{if }i=t\alpha_1=\cdots=t\alpha_k, \\
				\theta_i & \text{otherwise}.
			\end{cases}
			$$ 
			\item If $\mathcal{E}'$ is the twisted $Q'$-bundle obtained from Proposition \ref{restrictionOfScalarsAndOperationsOnQuivers} after cloning any number of vertices of $Q$, then $\mathcal{E}'$ is polystable for the obvious choice of stability parameters.
			\item If $\mathcal{E}'$ is the twisted $Q'$-bundle obtained from Proposition \ref{restrictionOfScalarsAndOperationsOnQuivers} after cloning (deleting) the edge $\alpha\in Q_1$ and if {\normalfont
				$$
				\varphi_\alpha\varphi^*_\alpha=\tau\text{Id}_{\mathcal{E}_{h\alpha}}, \varphi_\alpha^*\varphi_\alpha=\tau\text{Id}_{\mathcal{E}_{t\alpha}}
				$$} 
			for some $\tau\in \mathbb{R}$,
			then $\mathcal{E}'$ is $(\sigma,\theta')$-polystable for 
			, for 
			$$
			\theta'_i=\begin{cases}
				\theta_i-(+)\tau & \text{if }i=h\alpha,\\
				\theta_i+(-)\tau & \text{if }i=t\alpha, \\
				\theta_i & \text{otherwise}.
			\end{cases}
			$$ 
		\end{enumerate}
	\end{proposition}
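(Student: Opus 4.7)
The plan is to mimic the strategy from the proof of Theorem \ref{polystabilityTensorProductQuiverBundles}: invoke the Hitchin--Kobayashi correspondence in both directions to reduce each claim to a direct computation with the quiver vortex equations. Since $\mathcal{E}$ is $(\sigma,\theta)$-polystable, there exist hermitian metrics $H_i$ on each $\mathcal{E}_i$ such that Equation (\ref{quiverVortexEquations}) holds at every vertex $i \in Q_0$. For each of the four operations I would construct natural hermitian metrics on the bundles of $\mathcal{E}'$ out of the $H_i$, verify that these satisfy the quiver $(\sigma',\theta')$-vortex equations for the modified quiver $Q'$, and then conclude $(\sigma',\theta')$-polystability of $\mathcal{E}'$ via the converse direction of the correspondence.

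For claim (1), at the collapsed vertex $i \oplus j$ I would take the bundle $\mathcal{E}_i \oplus \mathcal{E}_j$ endowed with the orthogonal direct-sum metric $H_i \oplus H_j$. The Chern curvature splits as $F_{\mathcal{E}_i} \oplus F_{\mathcal{E}_j}$, and the commutator $[\delta,\delta^*]_{i \oplus j}$ is block-diagonal because every edge of $Q'$ incident to $i \oplus j$ comes from an edge of $Q$ incident only to $i$ or only to $j$, so the corresponding morphism and its adjoint preserve the direct-sum decomposition. The hypothesis $\sigma_i = \sigma_j$ and $\theta_i = \theta_j$ is exactly what is required for the two block-diagonal vortex equations at $i$ and $j$ to combine into a single scalar one at $i \oplus j$ with the restricted stability data. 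Claim (3) is handled analogously: each cloned vertex carries an exact copy of $(\mathcal{E}_i, H_i)$, so the vortex equation is simply duplicated.

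For claims (2) and (4) I would keep the metrics $H_i$ untouched and examine what changes in Equation (\ref{quiverVortexEquations}) at the endpoints $h\alpha$ and $t\alpha$ of the edges being collapsed or cloned/deleted. Under restriction of scalars from Proposition \ref{restrictionOfScalarsAndOperationsOnQuivers}, the combined morphism $\varphi'$ of the collapsed edge contributes terms $\varphi'(\varphi')^*$ and $(\varphi')^*\varphi'$ in place of the individual contributions $\sum_n \varphi_{\alpha_n}\varphi_{\alpha_n}^*$ (respectively $\sum_n \varphi_{\alpha_n}^*\varphi_{\alpha_n}$) appearing in the original vortex equation at $h\alpha$ (respectively $t\alpha$). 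The isometry-type hypotheses $\sum_{n,m}\varphi_{\alpha_n}\varphi_{\alpha_m}^* = \tau\,\text{Id}$ and $\sum_{n,m}\varphi_{\alpha_n}^*\varphi_{\alpha_m} = \tau\,\text{Id}$ are precisely what allow this perturbation to be written as a scalar multiple of the identity, so that after rearranging one obtains the same vortex equation but with $\theta$ shifted by $\mp \tau$ at the appropriate vertices. Claim (4) is the single-edge specialization of this argument; the sign flip between cloning and deleting reflects whether a copy of $\varphi_\alpha\varphi_\alpha^*$ is added to or removed from the commutator $[\varphi,\varphi^*]_{h\alpha}$.

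The main technical obstacle I foresee is careful bookkeeping: each summation over edges in Equation (\ref{quiverVortexEquations}) at a vertex of $Q'$ must be correctly re-expressed in terms of summations over edges of $Q$, the adjoints must be computed consistently with the twisting metrics $q_\alpha$ (as flagged in the opening remark of the proof of Theorem \ref{polystabilityTensorProductQuiverBundles}), and one must confirm that the hypotheses in (2) and (4) are genuinely invariant under the choice of $H_i$ solving the vortex equations, so that no circularity arises. Once this bookkeeping is in place, each of the four claims reduces to a short direct verification and the Hitchin--Kobayashi correspondence finishes the argument.
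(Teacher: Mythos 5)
Your proposal follows exactly the route the paper intends: the paper offers no written argument for this proposition beyond the remark that it follows ``using the same approach in the proof of Theorem \ref{polystabilityTensorProductQuiverBundles}'', i.e.\ producing natural metrics on the modified bundles, checking the quiver $(\sigma',\theta')$-vortex equations directly, and invoking the Hitchin--Kobayashi correspondence in both directions. Your sketch of the block-diagonal splitting for vertex operations and the scalar perturbation of $[\varphi,\varphi^*]_i$ for edge operations is the computation the paper leaves implicit, so the approaches coincide.
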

	\begin{example}\label{deformationComplexExample}
		Following Example \ref{extendedHomQuiverExample}, starting from a $\sigma=(\sigma_i)_{i\in Q_0}$-polystable quiver bundle $\mathcal{E}$, the extended Hom-quiver bundle $\mathcal{E}^*\otimes \mathcal{E}$ will be $(\sigma_j-\sigma_i)_{(i,j)\in Q_0\times Q_0}$-polystable by Theorem \ref{polystabilityTensorProductQuiverBundles}. If we collapse all the vertices of the form $(i,i)\in (Q^{op}\otimes Q)_0$, Proposition \ref{restrictionOfScalarsAndOperationsOnQuivers} and Proposition \ref{effectOfOperationsInPolystability} will give us a polystable quiver bundle which is obtained from $\mathcal{E}^*\otimes \mathcal{E}$ by collapsing all the holomorphic vector bundles of the form $\text{Hom}(\mathcal{E}_i,\mathcal{E}_i)$ as shown next for quivers of type $A_3$ (compare with the quiver bundle in Example \ref{extendedHomQuiverExample}). 
		\begin{center}
			\begin{tikzcd}[scale cd=0.8, column sep =large, row sep =large]
				&\text{Hom}(\mathcal{E}_1,\mathcal{E}_2) \arrow[dr,"\text{Id}\otimes \varphi_2-\varphi_2^*\otimes \text{Id}"]& &\text{Hom}(\mathcal{E}_2,\mathcal{E}_1) \arrow[dr,"-\varphi_3^*\otimes \text{Id}"]&   \\
				\text{Hom}(\mathcal{E}_1,\mathcal{E}_3) \arrow[ur,"\text{Id}\otimes \varphi_3"] \arrow[dr,"-\varphi_2^*\otimes \text{Id}",']&  & 	\bigoplus_{i=1}^3\text{Hom}(\mathcal{E}_i,\mathcal{E}_i) \arrow[ur,"\text{Id}\otimes \varphi_2-\varphi_2^*\otimes \text{Id}",'] \arrow[dr,"\text{Id}\otimes \varphi_3-\varphi_3^*\otimes \text{Id}",']&  & \text{Hom}(\mathcal{E}_3,\mathcal{E}_1)\\
				&\text{Hom}(\mathcal{E}_2,\mathcal{E}_3)\arrow[ur,"\text{Id}\otimes \varphi_3-\varphi_3^*\otimes \text{Id}"] & &\text{Hom}(\mathcal{E}_3,\mathcal{E}_2) \arrow[ur,"\text{Id}\otimes \varphi_2",']& 
			\end{tikzcd}
		\end{center}
		For $Q$ of type $A_m$, Gothen and Nozad \cite{GothenNozad} used this collapsed quiver bundle to give an algebraic proof of an earlier result by Álvarez-Cónsul et al. relevant in the deformation theory of the moduli space of holomorphic chains over a compact Riemann surface \cite{AlvarezConsulGarciaPradaSchmitt}.  
		\\
		\\
		If we furthermore collapse those vector bundles associated to the outgoing edges of the vertices we have just collapsed and then we discard the rest we obtain the two-term complex:
		$$
		C^\bullet(\mathcal{E}):\bigoplus_{i\in Q_0}\text{Hom}(\mathcal{E}_i,\mathcal{E}_i)\longrightarrow \bigoplus_{\alpha\in Q_1}\text{Hom}(\mathcal{E}_{t\alpha},\mathcal{E}_{h\alpha}).
		$$ 
		This is the so-called \emph{deformation complex} of the quiver bundle $\mathcal{E}$. When $\mathcal{E}$ is a semistable holomorphic chain, this complex encodes local information, on a neighborhood of the equivalence class representing $\mathcal{E}$, of the moduli space of semistable holomorphic chains, see for instance \cite[Theorem 3.8]{BGGTriplesPaper} and \cite[Theorem 3.8]{AlvarezConsulGarciaPradaSchmitt}. 
		This is a further  compelling feature of the quiver bundle $\mathcal{E}^*\otimes \mathcal{E}$ and it would be interesting to understand the extra local information the rest of this quiver bundle is giving of the corresponding moduli space. 
	\end{example}
	
	\subsection{Segre embedding for quiver representations}\label{segreEmbeddingForQuiverRepresentations}
	The results in Section \ref{representationTheoryOfTensorQuivers}, Theorem \ref{polystabilityTensorProductQuiverBundles} and its subsequent corollary tell us that tensor products of polystable quiver bundles give rise to polystable representations of the tensor quiver. The polystability condition, as we already have seen, basically means that a set of equations needs to be satisfied. There could exist, however, polystable representations of the tensor quiver that do not come from tensor products of polystable quiver bundles. It is therefore natural to ask if the set of polystable representations of the tensor quiver that come from tensor products of polystable quiver representations has any sort of analytic or algebraic structure. One could think of the question we are asking as a quiver version of that the classical Segre embedding answers: if $V,W$ are finite-dimensional complex vector spaces, then decomposable tensors of $V\otimes W$ can be described by a closed subvariety of the projective space $\mathbb{P}(V\otimes W)$ (see, for instance \cite[p.25]{HarrisAG}).
	\\
	\\
	In the rest of this section we deal with this problem for the linear case, that is, for $X=\text{Spec}(\mathbb{C})$. Let $Q',Q''$ be quivers, $d'\in \mathbb{N}^{|Q_0'|}, \ d''\in \mathbb{N}^{|Q_0''|}$  be indivisible dimension vectors and $\theta'\in \mathbb{Z}^{|Q_0'|},\theta''\in\mathbb{Z}^{|Q_0''|}$ stability parameters for which the symplectic quotients $\mu^{-1}(\theta')/\text{U}(d')$ and $\mu^{-1}(\theta'')/\text{U}(d'')$ are smooth manifolds. Here we have denoted $\mu^{-1}(\theta)$, for the sake of simplicity, the subset of $\text{Rep}(Q,d)$ given by the solutions to the equations $[\varphi,\varphi^*]_i=\theta_i\text{Id}$ for all $i\in Q_0$. By Theorem \ref{quiverVortexEqnsLinearCase} we know that this subset corresponds to the $\theta$-polystable quiver representations of $Q$. Now, Corollary \ref{polystabilityTensorProductQuiverRepresentations} implies that there is a well defined map 
	\begin{equation}\label{tensorProductMap}
		\begin{matrix}
			\mu^{-1}(\theta')\times \mu^{-1}(\theta'') & \longrightarrow & \mu^{-1}(\theta) \\
			(\varphi,\psi) &\longmapsto  & \varphi\otimes \psi. 
		\end{matrix}
	\end{equation}
	This map is equivariant for the group morphism $$\begin{matrix}
		\text{U}(d')\times \text{U}(d'')& \longrightarrow&\text{U}(d) \\
		((M_i)_{i\in Q_0'},(N_j)_{j\in Q_0''}) & \longmapsto & (M_i\otimes N_j)_{(i,j)\in (Q'\otimes Q'')_0}
	\end{matrix} $$
	for $d=(d_i'd_j'')_{(i,j)\in (Q'\otimes Q'')_0}$. Hence, the map in Equation (\ref{tensorProductMap}) factors through a map 
	$$
	\begin{matrix}
		F: & \mu^{-1}(\theta')/\text{U}(d')\times \mu^{-1}(\theta'')/\text{U}(d'') & \longrightarrow &\mu^{-1}(\theta)/\text{U}(d).
	\end{matrix}
	$$
	The indivisibility of $d$ and a generic choice of stability parameters $\theta',\theta''$ imply that the latter quotient above can be assumed to be a manifold (see Remark \ref{indivisibilityDimensionVector}). 
	We claim that $F$ is an embedding. We will check this in two steps: first we see that $F$ is an immersion and then we check its injectivity. 
	\\
	\\
	The preimage theorem tells us that the tangent space at $\varphi=(\varphi_{\alpha})_{\alpha\in Q_1}\in \mu^{-1}(\theta)$ is $T_\varphi\mu^{-1}(\theta)=\ker(\text{d}_\varphi\mu)$. A direct computation gives that $A=(A_\alpha)_{\alpha\in Q_1}\in \text{ker}(\text{d}_\varphi\mu)$ if and only if 
	$$
	\sum_{h\alpha = i}\varphi_\alpha A_\alpha^*+A_{\alpha}\varphi_\alpha^*=\sum_{t\alpha=i}\varphi_{\alpha}^*A_\alpha+A_\alpha^*\varphi_\alpha
	$$
	for all $i\in Q_0$. Also we have the following complex of vector spaces
	\begin{center}
		\begin{tikzcd}[column sep =large, row sep =large]
			0\arrow[r]&\mathfrak{u}(d)\arrow[r,"d_0"]& T_{\varphi}\text{Rep}(Q,d) \arrow[r,"d_1"] &\mathfrak{u}(d)\arrow[r]&0
		\end{tikzcd}
	\end{center}
	where $d_0$ is the infinitesimal action described in Equation (\ref{infinitesimalActionLieAlgebraUnitaryGroups}) and $d_1=\text{d}_\varphi\mu$. Therefore, 
	$$
	T_{[\varphi]}\mu^{-1}(\theta)/\text{U}(d)=\ker(d_1)/\text{Im}(d_0)\cong\ker(d_1)\cap\ker(d_0^*)  
	$$
	where $d_0^*:T_{\varphi}\text{Rep}(Q,d)\to \mathfrak{u}(d)$ is the adjoint to $d_0$ which can be computed to be 
	$$
	d_0^*(\psi)=[\psi,\varphi^*]=(\sum_{h\alpha=i}\psi_\alpha\varphi_\alpha^*-\sum_{t\alpha=i}\varphi_{\alpha}^*\psi_\alpha)_{i\in Q_0}
	$$
	for $\psi=(\psi_\alpha)_{\alpha\in Q_1}\in T_\varphi\text{Rep}(Q,d)$. 
	\\
	\\
	To see that $F$ is an immersion we first note that the tensorization map $\text{Rep}(Q',d')\times \text{Rep}(Q'',d'')\to \text{Rep}(Q'\otimes Q'',d)$, which is the same as that in Equation (\ref{tensorProductMap}), is a linear map so its derivative is itself. This map and hence its derivative are clearly injective. On the other hand, our preceeding discussion identifies the tangent space $T_{[\varphi]}\text{Rep}(Q,d)/\text{U}(d)$ with a subspace of $T_{\varphi}\text{Rep}(Q,d)\cong \text{Rep}(Q,d)$. Thus, to show the claimed statement about $F$, it is enough to show that if $A\in\ker(d_1')\cap \ker(d_0'^*)$, $B\in\ker(d_1'')\cap \ker(d_0''^*)$ then $A\otimes B\in \ker(d_0)\cap\ker(d_1^*)$. A straightforward computation shows that this is indeed true. 
	\\
	\\
	Finally, we deal with the injectivity of the map $F$.  It is enough to show that if $\varphi_1\otimes \psi_1$ and $\varphi_2\otimes \psi_2$ are in the same $\text{U}(d)$-orbit then $\varphi_1$ lies in the same $\text{U}(d')$-orbit as $\varphi_2$ and similarly for $\psi_1$ with $\psi_2$. Here $\varphi_k\in \mu^{-1}(\theta')$ and $\psi_k\in \mu^{-1}(\theta'')$ for $k=1,2$. Note that for a fixed $j\in Q_0''$, the tuple
	$$\varphi_{k,j}:=(\varphi_{k,\alpha}\otimes \text{Id}_j=\varphi_\alpha^{\oplus d''_j}:(\mathbb{C}^{t\alpha})^{\oplus d_j''}\to (\mathbb{C}^{h\alpha})^{\oplus d_j''})_{\alpha\in Q_1'}$$
	defines a representation in $\text{Rep}(Q',d_j''d)$. In fact, this representation is $\theta'$-polystable as it is isomorphic to the direct sum of $d_j''$ copies of $\varphi$. Furthermore, note that $\varphi_{1,j}$ is in the same $\text{U}(d_j''d')$-orbit as $\varphi_{2,j}$ so in particular these two representations of the quiver $Q'$ are isomorphic. Now, we recall that the category of $\theta'$-semistable representations is abelian  and that the simple objects in this category are those $\theta'$-stable representations \cite[Lemma 10.6]{libroKirillov}. So, $\text{Hom}(\varphi_{1,j},\varphi_{2,j})\cong\text{Hom}(\varphi_1,\varphi_2)^{\oplus d_j''d_j''}$ but if $\varphi_{1,j}\cong\varphi_{2,j}$ then not all of the $d_j''\times d_j''$ summands in which the isomorphism decomposes can be zero. Schur's lemma then tells us that $\varphi_1\cong\varphi_2$ since $\varphi_1,\varphi_2$ are simple.  However, this isomorphism does not need to be unitary. But, if not, then this would contradict Theorem \ref{quiverVortexEqnsLinearCase} as every $\text{GL}(d')$-orbit intersects one and only one $\text{U}(d')$-orbit. The same argument also shows that $\psi_1\cong \psi_2$ so $F$ is indeed an injective map. 
	\\
	\\
	We summarize the previous discussion in the following theorem:
	\begin{theorem}\label{quiverSegreEmbeddingTheorem}
		Let $Q',Q''$ be quivers, $d'\in \mathbb{N}^{|Q_0'|},d''\in \mathbb{N}^{|Q_0''|}$, $\theta'\in \mathbb{R}^{|Q_0'|},\theta''\in\mathbb{R}^{|Q_0''|}$ be  generic dimension vectors and stability parameters respectively so that the corresponding symplectic reductions are manifolds. Then, the tensorization map
		{\normalfont 
			$$
			\begin{matrix}
				\mu^{-1}(\theta')/\text{U}(d')\times \mu^{-1}(\theta'')/\text{U}(d'') & \longrightarrow & \mu^{-1}(\theta)/\text{U}(d) \\ 
				([\varphi=(\varphi_\alpha)_{\alpha\in Q'_1}],[\psi=(\psi_\beta)_{\beta\in Q_1''}]) & \longmapsto & [(\varphi_\alpha\otimes\text{Id})_{(\alpha,j)\in (Q'\otimes Q'')_1}\sqcup (\text{Id}\otimes \psi_\beta)_{(i,\beta)\in (Q'\otimes Q'')_1}]
			\end{matrix}
			$$}
		
		\noindent is an embedding for $\theta=(\theta_i'+\theta_j'')_{(i,j)\in (Q'\otimes Q'')_0}$ and $d=(d_i'd_j'')_{(i,j)\in (Q'\otimes Q'')_0}$. The image is then a submanifold of real dimension $-2(\langle d',d' \rangle_{Q'}+\langle d'',d''\rangle_{Q''})$.
	\end{theorem}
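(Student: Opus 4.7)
The plan is to show the tensorization map $F$ is an embedding in three moves: well-definedness on the symplectic quotients, immersion, and injectivity, and then read off the dimension. Well-definedness follows immediately from Corollary~\ref{polystabilityTensorProductQuiverRepresentations}, which upgrades $\theta'$ and $\theta''$-polystability to $\theta$-polystability of the tensor product, together with equivariance of the tensorization on parameter spaces under the natural homomorphism $\text{U}(d')\times \text{U}(d'')\to \text{U}(d)$ sending $((M_i)_{i\in Q_0'},(N_j)_{j\in Q_0''})$ to $(M_i\otimes N_j)_{(i,j)}$.

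For the immersion, I would use the standard K\"ahler reduction identification of the tangent space to $\mu^{-1}(\theta)/\text{U}(d)$ at $[\varphi]$ with $\ker(d_\varphi\mu)\cap \ker(d_0^*)$, where $d_0$ is the derivative of the $\text{U}(d)$-action and $d_0^*(\psi)=[\psi,\varphi^*]$ is its adjoint. Since the tensorization map on parameter spaces is bilinear, its differential at $(\varphi,\psi)$ is the tensor product itself, which is plainly injective. What needs checking is that a tangent vector of the form $A\otimes\mathrm{Id}+\mathrm{Id}\otimes B$, with $A,B$ in the respective ``horizontal'' subspaces, again lies in the horizontal subspace of the tensor representation; this reduces to a direct expansion using the explicit formulas for $d_0$, $d_0^*$, and $d_1$ together with the fact that $\varphi$ and $\psi$ satisfy their respective moment map equations.

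Injectivity of $F$ is the delicate step, and I expect it to be the main obstacle. Suppose $\varphi_1\otimes\psi_1$ and $\varphi_2\otimes\psi_2$ lie in the same $\text{U}(d)$-orbit. The trick is to fix $j\in Q_0''$ and observe that the tuple obtained by tensoring $\varphi_k$ with $\mathrm{Id}_{\mathbb{C}^{d_j''}}$ is isomorphic to $\varphi_k^{\oplus d_j''}$ as a representation of $Q'$. The ambient $\text{U}(d)$-equivalence then restricts to an isomorphism $\varphi_1^{\oplus d_j''}\cong \varphi_2^{\oplus d_j''}$, whose matrix of components lies in $\mathrm{Hom}(\varphi_1,\varphi_2)^{\oplus d_j''\times d_j''}$. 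Because the category of $\theta'$-semistable representations is abelian with simple objects the $\theta'$-stable ones \cite[Lemma~10.6]{libroKirillov}, Schur's lemma forces at least one block to be invertible, hence $\varphi_1\cong\varphi_2$ in $\mathrm{Rep}(Q',d')$. A priori this is only a $\text{GL}(d')$-isomorphism, but Theorem~\ref{quiverVortexEqnsLinearCase} promotes it to a $\text{U}(d')$-isomorphism because each closed $\text{GL}(d')$-orbit meets a unique $\text{U}(d')$-orbit inside $\mu^{-1}(\theta')$; the symmetric argument handles $\psi$.

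The dimension statement follows by summing the real dimensions of the two K\"ahler quotients: under the indivisibility of $d',d''$ and genericity of $\theta',\theta''$ (Remark~\ref{indivisibilityDimensionVector}), each factor has the expected real dimension $-2\langle d,d\rangle_Q$ in the Euler form of its underlying quiver, and adding yields the stated formula. The crux of the whole argument, to reiterate, is the multiplicity/blowup trick combined with the Kempf-Ness-type uniqueness in Theorem~\ref{quiverVortexEqnsLinearCase}: without it one would only obtain a linear, rather than unitary, intertwiner between $\varphi_1$ and $\varphi_2$.
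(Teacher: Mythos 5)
Your proposal follows the paper's own argument essentially verbatim: well-definedness via Corollary \ref{polystabilityTensorProductQuiverRepresentations} and equivariance for $\text{U}(d')\times\text{U}(d'')\to\text{U}(d)$, the immersion step via the identification $T_{[\varphi]}\mu^{-1}(\theta)/\text{U}(d)\cong\ker(d_1)\cap\ker(d_0^*)$ and a direct check that image tangent vectors stay horizontal, and injectivity via the same multiplicity trick ($\varphi_{k}\otimes\mathrm{Id}_j\cong\varphi_k^{\oplus d_j''}$, Schur's lemma in the abelian category of semistable representations, then Theorem \ref{quiverVortexEqnsLinearCase} to upgrade the $\text{GL}$-isomorphism to a unitary one). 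The only deviations are cosmetic --- the tensorization map on parameter spaces is linear rather than bilinear (its components each depend on only one of the two arguments, which is why the differential is the map itself) --- and do not affect the argument.
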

	\begin{remark}
		
		The bilinear form on $\mathbb{Z}^{|Q_0|}$,
		$$
		\langle d,d'\rangle_Q:=\sum_{i\in Q_0}d_id'_i-\sum_{\alpha\in Q_1}d_{t\alpha}d'_{h\alpha}
		$$
		is the so-called \emph{Euler form} associated to $Q$ and it plays an important role in the representation theory of the quiver $Q$ (see, for instance, \cite{lectureNotesCrawley}). 
	\end{remark}
	\begin{remark}\label{algebraicQuiverSegreEmbedding}
		By Serre's GAGA \cite[Théorème 2]{GAGA} and under suitable hypotheses, for instance asking that the quiver $Q$ has no cycles so that the moduli space $\mathcal{M}^{\theta-ss}(Q,d)=\mathcal{M}^{\theta-s}(Q,d)$ is a smooth projective variety (see Section \ref{theLinearCaseSubsection}), the associated map
		$$
		\mathcal{M}^{\theta'-s}(Q',d')\times \mathcal{M}^{\theta''-s}(Q'',d'') \to \mathcal{M}^{\theta-s}(Q,d),
		$$
		to that in Theorem \ref{quiverSegreEmbeddingTheorem}, is algebraic and, in fact, a closed immersion. 
	\end{remark}
	\begin{example}\label{classicalSegreEmbedding}
		The purpose of this example is to see that the classical Segre embedding can be recovered from our previous discussions. We work now in the algebraic category. Recall the $n$-edge Kronecker quiver, $Q_n$, from Example \ref{kroneckerQuiverExample}. Our first task will be to describe the moduli space of $\theta$-semistable representations,  for $\theta=(-2,0,0,2)$, of the quiver 
		\begin{center}
			\begin{tikzcd}[scale cd=0.8, column sep =small, row sep =small]
				&&\mathbb{C}\arrow[to=3-5,bend left,"y_1"] \arrow[to=3-5,bend right,"y_m",']& &\\
				&\ddots&&\iddots& \\
				\mathbb{C} \arrow[to=1-3,bend left,"x_1"] \arrow[to=1-3,bend right,"x_n",']\arrow[to=5-3,bend left,"w_1"] \arrow[to=5-3,bend right,"w_m",']& & && \mathbb{C} \\
				&\iddots&&\ddots& \\
				&& \mathbb{C} \arrow[to=3-5,bend left,"z_1"] \arrow[to=3-5,bend right,"z_n",']&& 
			\end{tikzcd},
		\end{center}
		or in other words, following the notation from Section \ref{theLinearCaseSubsection}, the GIT quotient 
		$\mathcal{M}^{\theta-ss}(Q_n\otimes Q_m,d)$ with $d=(1,1,1,1)$. Note that $\theta$-semistability is equivalent to semistability with respect to the parameter $(-1,0,0,1)$ so for convenience we will work with the latter instead. Following Section \ref{theLinearCaseSubsection} we know that $t=(t_1,t_2,t_3,t_4)\in \text{GL}(d)$ acts on $\varphi\in V=\text{Rep}(Q_n\otimes Q_m,d)\cong \mathbb{A}^{2(n+m)}_\mathbb{C}$ by
		$$
		(t_1,t_2,t_3,t_4)\cdot \varphi=\bigg(\frac{t_2}{t_1}x_i,\frac{t_4}{t_2}y_j,\frac{t_3}{t_1}w_k,\frac{t_4}{t_3}z_l\bigg)_{\substack{i,l=1,\ldots,n\\j,k=1,\ldots,m}}
		$$
		where $\varphi=(x_i,y_j,w_k,z_l)_{\substack{i,l=1,\ldots,n\\j,k=1,\ldots,m}}$. Also we know that 
		$$
		\mathcal{M}^{\theta-ss}(Q_n\otimes Q_m,d)=\text{Proj}(\bigoplus_{\eta\geq 0}\mathcal{O}_V^{\chi_\theta^\eta})
		$$ 
		for
		\begin{align*}
			\mathcal{O}_V^{\chi_\theta^\eta}&=\{f\in \mathcal{O}_V\cong \mathbb{C}[x_i,y_j,w_k,z_l]_{\substack{i,l=1,\ldots,n\\j,k=1,\ldots,m}}| \  f(t\cdot \varphi)=\bigg(\frac{t_4}{t_1}\bigg)^\eta f(\varphi) \ \forall  \varphi\in V \text{ and }t\in \text{GL}(d)\} \\
			&=\mathbb{C}[x_iy_j,w_kz_l]_{(\eta)}.
		\end{align*}
		The subscript in the latter term of the equation means that we are taking all those homogeneous polynomials of degree $\eta$. 
		But, 
		$$
		\bigoplus_{\eta\geq 0}\mathcal{O}_V^{\chi_\theta^\eta}=\mathbb{C}[x_iy_j,w_kz_l]_{\substack{i,l=1,\ldots,n\\j,k=1,\ldots,m}}\cong \mathbb{C}[s_{ij},t_{kl}]_{\substack{i,l=1,\ldots,n\\j,k=1,\ldots,m}}/\mathcal{I},
		$$ 
		with $$\mathcal{I}=(s_{i_1j_1}s_{i_2j_2}-s_{i_1j_2}s_{i_2j_1},t_{k_1l_1}t_{k_2l_2}-t_{k_1l_2}t_{k_2l_1})_{\substack{i_1,i_2,l_1,l_2=1,\ldots,n\\j_1,j_2,k_1,k_2=1,\ldots,m}},$$ 
		via the morphism defined on generators by $s_{ij}\mapsto x_iy_j$ and $t_{kl}\mapsto w_kz_l$. 
		So, $\mathcal{M}^{\theta-ss}(Q_n\otimes Q_m,d)$ is the projective variety given by the equations defining the homogeneous ideal $\mathcal{I}$. As a matter of fact, $\mathcal{M}^{\theta-ss}(Q_n\otimes Q_m,d)$ is smooth as every $\theta$-semistable representation is in fact $\theta$-stable (see Section \ref{theLinearCaseSubsection}).
		\\
		\\
		Following the results from Section \ref{theUntwistedCase} and Remark \ref{algebraicQuiverSegreEmbedding}, the image of the morphism
		$$
		\begin{matrix}
			\mathcal{M}^{\theta'-s}(Q_n,d')&\times &\mathcal{M}^{\theta'-s}(Q_m,d')&\longrightarrow & \mathcal{M}^{\theta-s}(Q_n\otimes Q_m,d) \\
			\cong & &\cong &&
			\\\mathbb{P}^{n-1}_\mathbb{C} & &\mathbb{P}^{m-1}_\mathbb{C}&&
		\end{matrix}
		$$
		with $d'=(1,1)$ and $\theta'$ as in Example \ref{exampleProjectiveSpaceAsQuiverModuli}, induced by the tensor product of quiver representations:
		\begin{center}
			\begin{tikzcd}[scale cd=0.8]
				\mathbb{C} \arrow[to=1-3,bend left=25,"z_1"]\arrow[to=1-3,bend right=25,"z_n",']& \vdots & \mathbb{C}
			\end{tikzcd}$\otimes$ \begin{tikzcd}[scale cd=0.8]
				\mathbb{C}  \arrow[to=1-3,bend left=25,"w_1"]\arrow[to=1-3,bend right=25,"w_m",']& \vdots & \mathbb{C}
			\end{tikzcd}=\begin{tikzcd}[scale cd=0.8, column sep =small, row sep =small]
				&&\mathbb{C}\arrow[to=3-5,bend left,"w_1"] \arrow[to=3-5,bend right,"w_m",']& &\\
				&\ddots&&\iddots& \\
				\mathbb{C} \arrow[to=1-3,bend left,"z_1"] \arrow[to=1-3,bend right,"z_n",']\arrow[to=5-3,bend left,"w_1"] \arrow[to=5-3,bend right,"w_m",']& & && \mathbb{C} \\
				&\iddots&&\ddots& \\
				&& \mathbb{C} \arrow[to=3-5,bend left,"z_1"] \arrow[to=3-5,bend right,"z_n",']&& 
			\end{tikzcd},
		\end{center}
		is the subvariety of $\mathcal{M}^{\theta-s}(Q_n\otimes Q_m,d)$ cut out by the equations 
		$$
		s_{ij}-t_{ji}=0, \ i=1,\ldots,n,\ j=1,\ldots,m.
		$$
		This subvariety is precisely that which the classical Segre embedding describes.

	\end{example}
	\subsection{Character varieties of free abelian groups} \label{characterVarietiesSection} Let us consider the free abelian group $\mathbb{Z}^r$, $r\in \mathbb{N}$. The so-called $\text{GL}(n)$-\emph{character variety} of $\mathbb{Z}^r$ is the GIT quotient 
	$$
	\mathcal{M}_r(\text{GL}(n)):=\text{Hom}(\mathbb{Z}^r, \text{GL}(n))\sslash \text{GL}(n)
	$$ 
	where the action of the group $\text{GL}(n)$ on the space of representations of $\mathbb{Z}^r$, $\text{Hom}(\mathbb{Z}^r,\text{GL}(n))$, is by conjugation.  We now use tensorization of quiver reprentations to identify a distinguished closed irreducible subscheme of this character variety. We first study the case $r=2$ and later we will see that with minor modifications, our approach can be generalized for arbitrary $r\in \mathbb{N}$. 
	\\
	\\
	In the next lemma we follow the notation from Example \ref{exampleMatricesUnderConjugation} and Section \ref{moduliQuiversWithRelations}.
	\begin{lemma}\label{algebraicMorphismFromJordanQuivers}
		Let $n,m$ be positive natural numbers and let $\mathcal{R}=\{\alpha\beta-\beta\alpha\}$ be a set of relations of the quiver $Q=Q_J\otimes Q_J$ as in Figure \ref{eightQuiver}. There is a map of affine schemes
		$$
		\mathcal{M}(Q_J,n)\times \mathcal{M}(Q_J,m)\to \mathcal{M}(Q,nm,\mathcal{R})
		$$
		induced by tensorization of quiver representations.
	\end{lemma}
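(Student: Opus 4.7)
The plan is to construct the desired morphism first at the level of parameter spaces and then to descend it to the GIT quotients via the universal property of $\text{Spec}$ applied to rings of invariants.

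First, I would define the tensorization morphism $T\colon \text{End}(\mathbb{C}^n)\times \text{End}(\mathbb{C}^m)\to \text{End}(\mathbb{C}^{nm})\times \text{End}(\mathbb{C}^{nm})$ by $T(A,B):=(A\otimes \text{Id}_m,\text{Id}_n\otimes B)$. This is manifestly polynomial in the entries of $A$ and $B$, hence a morphism of affine varieties. Moreover, since $(A\otimes \text{Id}_m)(\text{Id}_n\otimes B)=A\otimes B=(\text{Id}_n\otimes B)(A\otimes \text{Id}_m)$, the pair $T(A,B)$ satisfies the commutativity relation $\alpha\beta-\beta\alpha$ of $\mathcal{R}$, so $T$ factors through the closed subvariety $\text{Rep}(Q,nm,\mathcal{R})\subseteq\text{Rep}(Q,nm)$. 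This is consistent with how the tensor product of quiver representations was constructed in Section \ref{tensorProductsTwistedRepresentations}: for $Q'=Q''=Q_J$, the single generator of the ideal $\mathcal{I}$ of Proposition \ref{isomorphismTensorProductTwistedPathAlgebras} is precisely the commutator $\alpha\beta-\beta\alpha$ described above.

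Next, I would verify equivariance of $T$ with respect to the group homomorphism
$$\rho\colon \text{GL}(n,\mathbb{C})\times \text{GL}(m,\mathbb{C})\longrightarrow \text{GL}(nm,\mathbb{C}),\qquad (g,h)\longmapsto g\otimes h.$$
A direct computation using $(g\otimes h)^{-1}=g^{-1}\otimes h^{-1}$ yields
$$T(gAg^{-1},hBh^{-1})=(gAg^{-1}\otimes \text{Id}_m,\text{Id}_n\otimes hBh^{-1})=\rho(g,h)\cdot T(A,B),$$
where the right-hand action is the simultaneous conjugation action from Equation (\ref{actionGLOnQuiverReps}).

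For the descent, I would use that the formation of affine GIT quotients by reductive groups is compatible with finite products, which gives the canonical identification
$$\mathcal{M}(Q_J,n)\times \mathcal{M}(Q_J,m)\cong \bigl(\text{End}(\mathbb{C}^n)\times\text{End}(\mathbb{C}^m)\bigr)\sslash\bigl(\text{GL}(n,\mathbb{C})\times \text{GL}(m,\mathbb{C})\bigr),$$
obtained from the isomorphism $\mathcal{O}(X\times Y)^{G\times H}\cong \mathcal{O}(X)^G\otimes \mathcal{O}(Y)^H$. The equivariance of $T$ along $\rho$ implies that the comorphism $T^{\#}$ pulls back $\text{GL}(nm,\mathbb{C})$-invariants to $\text{GL}(n,\mathbb{C})\times \text{GL}(m,\mathbb{C})$-invariants, so applying $\text{Spec}$ produces the desired morphism of affine schemes. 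The main points requiring care are the equivariance check along the non-surjective homomorphism $\rho$, which works out thanks to the bilinearity of the Kronecker product, and the factorization of the tensorization map through the relations of $\mathcal{R}$; beyond these the argument is essentially formal.
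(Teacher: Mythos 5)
Your proof is correct, but it takes a genuinely different route from the paper's. The paper works entirely on the level of explicit coordinate rings: it invokes the identification $\mathcal{M}(Q_J,n)=\operatorname{Spec}(\mathbb{C}[\alpha_1,\ldots,\alpha_n]^{\mathfrak{S}_n})$ from Example \ref{exampleMatricesUnderConjugation} and Domokos's description $\mathcal{M}(Q,nm,\mathcal{R})=\operatorname{Spec}(\mathbb{C}[\lambda_1,\ldots,\lambda_{nm},\mu_1,\ldots,\mu_{nm}]^{\mathfrak{S}_{nm}})$ of the invariants of the commuting variety \cite{Domokos}, then writes down the comorphism by hand as the substitution $\lambda\mapsto(\alpha_i$ repeated $m$ times$)$, $\mu\mapsto(\beta_j$ cycled $n$ times$)$, and verifies well-definedness by an explicit combinatorial construction of a permutation $\tau\in\mathfrak{S}_{nm}$ intertwining the $\mathfrak{S}_n\times\mathfrak{S}_m$- and $\mathfrak{S}_{nm}$-actions. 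You instead define the tensorization map upstairs on $\operatorname{End}(\mathbb{C}^n)\times\operatorname{End}(\mathbb{C}^m)$, check that it lands in the commuting locus and is equivariant along $(g,h)\mapsto g\otimes h$, and descend by the standard facts that equivariant maps pull invariants back to invariants and that affine GIT quotients commute with products. Both arguments are complete; yours is more conceptual, avoids the dependence on the eigenvalue descriptions of the two quotients, and generalizes verbatim to the $r$-fold tensor product in Equation (\ref{generalizedAlgebraicMorphismFromJordanQuivers}) (where the paper must redo the combinatorics). What the paper's explicit comorphism buys, and what your argument does not immediately provide, is the concrete formula for $\varphi$ that is exploited right after the lemma: the identification of the distinguished open subsets cut out by $\alpha_1\cdots\alpha_n\beta_1\cdots\beta_m$ and $\lambda_1\cdots\lambda_{nm}\mu_1\cdots\mu_{nm}$, and the localized ring map whose kernel defines the Segre subscheme of $\mathcal{M}_2(\mathrm{GL}(mn,\mathbb{C}))$. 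If you adopt your route you would still want to record that, in the coordinates of Example \ref{exampleMatricesUnderConjugation} and \cite{Domokos}, the descended map is given by that substitution, since the eigenvalues of $A\otimes\mathrm{Id}_m$ and $\mathrm{Id}_n\otimes B$ are exactly the repeated $\alpha_i$ and cycled $\beta_j$.
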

	\begin{proof}
		We recall from Example \ref{exampleMatricesUnderConjugation} that
		$$
		\mathcal{M}(Q_J,n)=\text{Spec}(\mathbb{C}[\alpha_1,\ldots,\alpha_{n}]^{\mathfrak{S}_{n}})
		$$
		where $\mathfrak{S}_{n}$, the symmetric group of degree $n$, acts by permuting variables and similarly for $\mathcal{M}(Q_J,m)$. On the other hand,
		$$
		\mathcal{M}(Q,nm,\mathcal{R})=\text{Spec}(\mathbb{C}[\lambda_1,\ldots,\lambda_{nm},\mu_1,\ldots,\mu_{nm}]^{\mathfrak{S}_{nm}})
		$$ 
		where $\sigma\in\mathfrak{S}_{nm}$ acts on the variables by $\lambda_{i}\mapsto \lambda_{\sigma(i)}$ and $\mu_i\mapsto \mu_{\sigma(i)}$ for all $i=1,\ldots,nm$ \cite[Theorem 4.1]{Domokos}. It is enough then to give a ring morphism
		$$
		\varphi:\mathbb{C}[\lambda_1,\ldots,\lambda_{nm},\mu_1,\ldots,\mu_{nm}]^{\mathfrak{S}_{nm}}\to \mathbb{C}[\alpha_1,\ldots,\alpha_{n}]^{\mathfrak{S}_{n}}\otimes_{\mathbb{C}}\mathbb{C}[\beta_1,\ldots,\beta_{m}]^{\mathfrak{S}_{m}}
		$$  
		in order to get a morphism of the associated affine schemes.  We claim that the map 
		$$
		p\mapsto p(\underbrace{\alpha_1,\ldots,\alpha_1}_{m\text{ times}},\ldots,\underbrace{\alpha_n,\ldots,\alpha_n}_{m\text{ times}},\underbrace{\beta_1,\ldots,\beta_m,\ldots,\beta_1,\ldots,\beta_m}_{n\text{ times}})$$
		is a well-defined morphism between these two rings since for all $p\in\mathbb{C}[\lambda_1,\ldots,\lambda_{nm},\mu_1,\ldots,\mu_{nm}]^{\mathfrak{S}_{nm}} $, $\sigma\in\mathfrak{S}_n$ and $\sigma'\in \mathfrak{S}_m$, there exist $\tau\in \mathfrak{S}_{nm}$ such that 
		$$
		\varphi(p)(\alpha_{\sigma(1)},\ldots,\alpha_{\sigma(n)},\beta_{\sigma'(1)},\ldots,\beta_{\sigma'(m)})=\varphi(\tau\cdot p)=\varphi(p).
		$$
		Indeed, given $\sigma\in \mathfrak{S}_n$, $\sigma'\in \mathfrak{S}_m$, the latter equality will hold for $\tau\in \mathfrak{S}_{nm}$ given by 
		$$
		km+j \mapsto m(\sigma(k+1)-1)+\sigma'(j), \ k=0,\ldots,n-1, \ j=1,\ldots,m. 
		$$
		It is now straightforward to see that the map induced by $\varphi$ on closed points is that coming from the tensor product 
		\vspace{-7mm}
		\begin{center}
			\begin{tikzcd}[scale cd=0.8, column sep =normal, row sep =normal]
				\mathbb{C}^n \arrow[loop,in=225,out=135,distance= 20mm,"A",']
			\end{tikzcd} $\otimes$ 		\begin{tikzcd}[scale cd=0.8, column sep =normal, row sep =normal]
				\mathbb{C}^m \arrow[loop,in=45,out=315,distance= 20mm,"B",']
			\end{tikzcd} =\begin{tikzcd}[scale cd=0.8, column sep =normal, row sep =normal]
				\mathbb{C}^n\otimes \mathbb{C}^m \arrow[loop,in=45,out=315,distance= 20mm,"\text{Id}_n\otimes B",']  \arrow[loop,in=225,out=135,distance= 20mm,"A\otimes \text{Id}_m",']
			\end{tikzcd}
		\end{center}
		\vspace{-7mm}
		of quiver representations.
	\end{proof}
	
	Now, we note that we have an open embedding 
	\begin{equation}\label{openEmbeddingIntoModuliOfJordabQuiver}
		\mathcal{M}_1(\text{GL}(n))=\text{GL}(n)\sslash\text{GL}(n)\longhookrightarrow\mathcal{M}(Q_J,n).
	\end{equation}
	As a matter of fact, the left-most GIT quotient is a distinguished open subset of the affine scheme $\mathcal{M}(Q_J,n)=\text{Spec}(\mathbb{C}[\alpha_1,\ldots,\alpha_n]^{\mathfrak{S}_n})$. To see this we note in the first place that 
	$$
	(\mathbb{C}[\alpha_1,\ldots,\alpha_n]^{\mathfrak{S}_n})_{(\alpha_1\ldots\alpha_n)}\cong(\mathbb{C}[\alpha_1,\ldots,\alpha_n]_{(\alpha_1\ldots\alpha_n)})^{\mathfrak{S}_n}.
	$$
	This comes from the fact that taking invariants under the action of a finite group commutes with taking localizations \cite[Chapter 5, Exercise 12]{AtiyahMacdonald}. In second place, we have that 
	\begin{equation}\label{chevalleyCharacter}
		\text{Spec}((\mathbb{C}[\alpha_1,\ldots,\alpha_n]_{(\alpha_1\ldots\alpha_n)})^{\mathfrak{S}_n})= T_n\sslash \mathfrak{S}_n\cong \text{GL}(n)\sslash\text{GL}(n)
	\end{equation}
	where $T_n=\overbrace{\mathbb{G}_m\times\cdots\times \mathbb{G}_m}^{n \text{ times}}$. In fact, the isomorphism in Equation (\ref{chevalleyCharacter}) is a particular instance of a more general fact and this is that $$\mathcal{M}_r(\text{GL}(k))\cong T_{k}^r\sslash \mathfrak{S}_{k}$$ 
	for all $r,k\in \mathbb{N}$ (see, for instance, \cite[Proposition 5.10]{FlorentinoSilva}). 
	\\
	\\
	The next step we take is to note that the open set $\mathcal{M}_1(\text{GL}(n))\times\mathcal{M}_1(\text{GL}(m))$ of the fibered product $\mathcal{M}(Q_J,n)\times \mathcal{M}(Q_J,m)$ is irreducible. This follows from both the fact that open subsets of irreducible topological spaces are irreducible ($\text{GL}(n)\sslash\text{GL}(n)$ is an open subset of $\mathcal{M}(Q_J,n)\cong \mathbb{A}^n_\mathbb{C}$ which is irreducible) and the fact that fibered products of irreducible varieties are irreducible. The latter is essentially because tensor products of $\mathbb{C}$-algebras without zero divisors, has no zero divisors. For a full proof of the latter we refer the reader to Iitaka \cite[p.97]{IitakaAG}.
	\\
	\\
	The same argument as that used to show the open embedding in Equation \ref{openEmbeddingIntoModuliOfJordabQuiver} gives that there is an open embedding 
	$$
	\mathcal{M}_2(\text{GL}(mn))\cong T_{nm}^2\sslash \mathfrak{S}_{nm} \longhookrightarrow \mathcal{M}(Q,nm,\mathcal{R})
	$$
	and, moreover, that the left-most quotient is also a distinguished open subset of $\mathcal{M}(Q,mn,\mathcal{R})$.
	Putting all together, we conclude that the restriction of the morphism in Lemma \ref{algebraicMorphismFromJordanQuivers} gives a morphism
	\begin{equation}\label{morphismTensorizationCharacterVarieties}
		\mathcal{M}_1(\text{GL}(n))\times \mathcal{M}_1(\text{GL}(m))\longrightarrow \mathcal{M}_2(\text{GL}(mn))
	\end{equation}
	whose set-theoretic image is irreducible.  Furthermore, this morphism is quasi-compact, since it is a morphism of affine schemes. Therefore, the scheme-theoretical image will be the affine closed subscheme of the character variety $\mathcal{M}_2(\text{GL}(mn))$ determined by the kernel of the ring map
	$$
	\mathbb{C}[\lambda_1,\ldots,\lambda_{nm},\mu_1,\ldots,\mu_{nm}]^{\mathfrak{S}_{nm}}_{(\lambda_1\cdots\lambda_{mn}\mu_1\cdots\mu_{mn})}\to (\mathbb{C}[\alpha_1,\ldots,\alpha_{n}]^{\mathfrak{S}_{n}}\otimes_{\mathbb{C}}\mathbb{C}[\beta_1,\ldots,\beta_{m}]^{\mathfrak{S}_{m}})_{(\alpha_1\cdots\alpha_n\beta_1\cdots\beta_m)},
	$$
	induced from the ring morphism $\varphi$ in Lemma  \ref{algebraicMorphismFromJordanQuivers}, and whose underlying topological space is the closure of the set-theoretical image (see \cite[Theorem 3.42 and Proposition 10.30]{GoertzAlgebraicGeometry}). We refer to the closed scheme we have just described as the \emph{Segre subscheme} of the character variety $\mathcal{M}_2(\text{GL}(mn))$ since it resembles the stream of ideas we have been following since Section \ref{segreEmbeddingForQuiverRepresentations}.
	\\
	\\
	As hinted at the beginning of this section, with minor modifications, the same technique can be used to obtain Segre subschemes of the character variety $\mathcal{M}_r(\text{GL}(n))$ for $r>2$. 
	Our first remark on this matter is that, as in Lemma \ref{algebraicMorphismFromJordanQuivers}, there is a morphism of affine schemes
	\begin{equation}\label{generalizedAlgebraicMorphismFromJordanQuivers}
		\mathcal{M}(Q_J,n_1)\times \cdots\times\mathcal{M}(Q_J,n_r)\longrightarrow \mathcal{M}(Q,n_1,\ldots n_r,\mathcal{R})
	\end{equation}
	coming from tensorization of quiver representations. Here $Q=Q_J^{\otimes r}$ (see Figure \ref{rLacedQuiver}) and $\mathcal{R}=\{\alpha_i\alpha_j-\alpha_j\alpha_i|i,j=1,\ldots,r\}$ so that  $\mathcal{M}(Q,n_1\ldots n_r,\mathcal{R})$ is obtained by taking the GIT quotient of the variety of $r$ commuting square matrices by the action of $\text{GL}(n_1\cdots n_r)$ via simultaneous conjugation. As in Lemma \ref{algebraicMorphismFromJordanQuivers}, the strategy to show that the claimed morphism in Equation (\ref{generalizedAlgebraicMorphismFromJordanQuivers}) exists is to give a morphism between the coordinate rings of the  corresponding affine schemes. The combinatorial argument to see the well-definedness of this ring morphism generalizes without issue. Secondly, as in Equation (\ref{morphismTensorizationCharacterVarieties}), the map from Equation (\ref{generalizedAlgebraicMorphismFromJordanQuivers}) restricts to a morphism
	$$\mathcal{M}_1(\text{GL}(n_1))\times \cdots\times \mathcal{M}_1(\text{GL}(n_r))\to\mathcal{M}_r(\text{GL}(n_1\cdots n_r)).$$
	The Segre suscheme of $\mathcal{M}_r(\text{GL}(n_1\cdots n_r)$ is then the scheme theoretic image under this morphism. 
	\vspace{-7mm}
	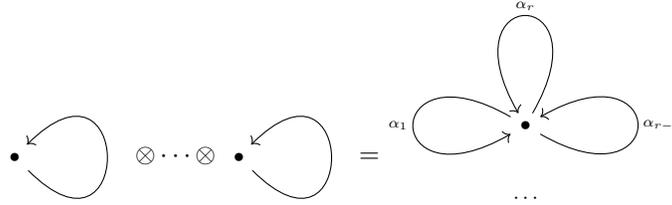
\begin{figure}[H]
		\begin{center}
			\begin{tikzcd}[scale cd=0.8,column sep =normal, row sep =normal]
				\bullet \arrow[loop,in=45,out=315,distance= 20mm]
			\end{tikzcd}$\otimes \cdots\otimes$ \begin{tikzcd}[scale cd=0.8, column sep =normal, row sep =normal]
				\bullet \arrow[loop,in=45,out=315,distance= 20mm]
			\end{tikzcd}=\begin{tikzcd}[scale cd=0.8, column sep =normal, row sep =normal]
				\bullet \arrow[loop,out=60,in=120,distance=20mm,"\alpha_r",']\arrow[loop,in=30,out=330,distance= 20mm,"\alpha_{r-1}",']  \arrow[loop,in=210,out=150,distance= 20mm,"\alpha_1",'] \\ \cdots
			\end{tikzcd}
		\end{center}
		\vspace{-7mm}
		\caption{\label{rLacedQuiver} Tensor product of $r$ Jordan quivers}
	\end{figure}	
	\nocite{*}
	\bibliography{biblio}
\end{document}